\title{Extendability of automorphisms of K3 surfaces}
\author{Yuya Matsumoto}
\date{2021/01/07}
\address{\tusaddressfull}
\email{\gmail}
\email{\tusmail}
\thanks{This work was supported by JSPS Program for Advancing Strategic International Networks to Accelerate the Circulation of Talented Researchers,
	and by JSPS KAKENHI Grant Numbers 15H05738, 16K17560, and 20K14296.}
\subjclass[2010]{14J28 (Primary) 11G25, 14L30, 14E30 (Secondary)}
\begin{document}

\begin{abstract}
A K3 surface $X$ over a $p$-adic field $K$ is said to have good reduction if it admits a proper smooth model over the ring of integers of $K$.
Assuming this, we say that a subgroup $G$ of $\mathrm{Aut}(X)$ is extendable 
if $X$ admits a proper smooth model equipped with $G$-action (compatible with the action on $X$).
We show that $G$ is extendable if it is of finite order prime to $p$ and acts symplectically (that is, preserves the global $2$-form on $X$).
The proof relies on birational geometry of models of K3 surfaces,
and equivariant simultaneous resolutions of certain singularities.
We also give some examples of non-extendable actions.
\end{abstract}

\maketitle

\section{Introduction}

Throughout this article,
$K$ is a complete discrete valuation field of characteristic $0$, 
$\cO_K$ is its valuation ring, and $k$ is its residue field of characteristic $p \geq 0$ which we assume to be perfect.

Let $X$ be a K3 surface over $K$ with good reduction.
In this paper we consider relations between the automorphism groups of $X$ and of its proper smooth models over $\cO_K$.

If $X$ is an abelian variety,
then a proper smooth model of $X$ satisfies the N\'eron mapping property,
hence any automorphism of $X$ extend to that of the model.
To the contrary, 
a proper smooth model of a K3 surface does not in general satisfy the N\'eron mapping property,
due to the existence of flops,
and this makes automorphisms of $X$ not extendable in general to proper smooth models $\cX$ of $X$. 

Our main results are the following two theorems.
One gives a sufficient condition for an action to be extendable,
and the other gives examples that are not extendable.
Here we say that $G$ is \emph{extendable} if $X$ admits a proper smooth model equipped with a $G$-action extending that on $X$.
For precise definitions see Section \ref{sec:specialization}.

\begin{thm} \label{thm:sympfin}
Let $G \subset \Aut(X)$ be a symplectic finite subgroup of order prime to $p$.
Then $G$ is extendable.
\end{thm}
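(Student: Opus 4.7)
The plan is a two-stage argument: first construct a proper $G$-equivariant $\cO_K$-model of $X$ that need not be smooth, then equivariantly resolve its singularities. Starting from the given proper smooth model $\cX$, each $g \in G$ extends to a birational $\cO_K$-map $g: \cX \rationalto \cX$ (the models $\cX$ and its $g$-translate share the generic fiber $X$). Form the fiber product $\cX^G := \prod_{g \in G} \cX$ over $\cO_K$ and equip $X^G$ with the twisted action $h \cdot (x_g)_g = (h \cdot x_{h^{-1}g})_g$; the diagonal $\Delta : X \injto X^G$ is then $G$-stable. Let $\cW \subset \cX^G$ be the scheme-theoretic closure of $\Delta(X)$. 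Then $\cW$ is a proper flat $\cO_K$-scheme carrying a $G$-action whose generic fiber is $X$, and each projection $\pi_g: \cW \to \cX$ is an isomorphism on generic fibers, so all the exceptional behavior is concentrated on the special fiber.

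Next, I would study the singularities of $\cW$. By the birational geometry of smooth K3 models over $\cO_K$---whose development is a key ingredient of the paper---any two smooth models differ by a sequence of flops whose exceptional loci consist of smooth rational $(-2)$-curves on the special fiber. Consequently each $\pi_g$ contracts such configurations on $\cX_k$, and locally $\cW$ agrees with the common blow-up of two smooth K3 models. This should force the singularities of $\cW$ to be rational double points (RDPs), concentrated on its special fiber, with the symplectic hypothesis entering to control the local $G$-action near its fixed points and to exclude non-ADE-type degenerations.

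Finally, I would apply an equivariant version of Brieskorn's simultaneous resolution of RDPs. Because $|G|$ is coprime to $p$, the local action of any stabilizer at a singular point of $\cW_k$ is tame and linearizable on the formal completion; the simultaneous resolution of an RDP family is functorial for the automorphism group (possibly after a tame base change), so gluing the local equivariant resolutions produces a proper smooth $\cO_K$-model $\cX'$ of $X$ on which $G$ acts, which is the required extendable smooth model.

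The principal obstacle is this final step: verifying that all singularities of $\cW$ are exactly ADE rational double points---for which the paper's birational geometry input and the symplectic hypothesis are both essential---and then performing the simultaneous resolution $G$-equivariantly. The assumption $p \nmid |G|$ is indispensable: it tames the local $G$-representations and allows semisimple decompositions at fixed points, and it is also what makes the tame base change that a simultaneous resolution generally requires controllable and descendable back to $\cO_K$.
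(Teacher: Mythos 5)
Your two-stage outline (first a proper $G$-equivariant model with controlled singularities, then an equivariant simultaneous resolution) matches the paper's strategy, but both stages have genuine gaps as you have set them up.

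First, the diagonal-closure construction produces the wrong kind of model. The closure $\cW$ of $\Delta(X)$ in $\prod_{g\in G}\cX$ dominates every translate $g^*\cX$, i.e.\ it is a common \emph{resolution} of the graphs of the flops relating them, not a common contraction. Already for two smooth models differing by a single flop at a $(-2)$-curve $C$, the graph closure is $\Bl_C\cX$, whose special fiber is the reducible surface $\cX_0\cup E$ with $E\cong\bP^1\times\bP^1$. So $\cW$ is not a proper RDP model in the sense of Definition \ref{def:RDP}: its special fiber is not an RDP K3 surface but has extra components, and no amount of resolving singularities of $\cW$ will produce a smooth model with K3 special fiber --- one would have to \emph{contract} $G$-equivariantly, which brings you back to needing the equivariant flop machinery. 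The paper instead goes downward: it takes a $G$-invariant ample line bundle, runs the $G$-equivariant flops of Proposition \ref{prop:G-flops} to make it nef, and uses the uniqueness of the resulting projective RDP model (Propositions \ref{prop:projectivemodel}, \ref{prop:G-projectivemodel}) to endow it with a $G$-action. That model genuinely has only RDPs on its special fiber, for any $G$; the symplectic hypothesis plays no role here, contrary to where you place it.

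Second, and more seriously, the final step assumes exactly the hard content of the paper. Simultaneous resolution of an RDP over $\cO_K$ is \emph{not} functorial for automorphisms, even tame ones: the set of simultaneous resolutions (after base change) is a torsor under the Weyl group, the stabilizer of a singular point permutes them, and a $G$-equivariant resolution exists only if this permutation action has a fixed point. Theorem \ref{thm:localresolution} states that this happens if and only if the local action is symplectic --- in particular, for non-symplectic tame actions no equivariant simultaneous resolution exists after any finite extension, which directly contradicts your claimed functoriality ``possibly after a tame base change.'' The symplectic hypothesis is therefore not there to ``exclude non-ADE degenerations''; it is the precise condition making the local equivariant resolution possible, and establishing it requires the case-by-case normal forms and explicit $G$-invariant blow-up ideals of Section \ref{sec:localresolution}. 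Once that local theorem and the projective RDP $G$-model are in hand, the remaining gluing over $G$-orbits is the easy part, and there your sketch agrees with the paper.
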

This fails without the assumptions, as the next theorem shows.

\begin{thm} \label{thm:non-extendable}
Let $p \geq 2$ be a prime.

(1) Let $G$ be either 
$\bZ/p\bZ$ (in which case we assume $p \leq 7$) or
$\bZ$. 
Then there exists a K3 surface $X$ defined and having good reduction over a finite extension $K$ of $\bQ_p$, 
equipped with a faithful symplectic action of $G$ that is not extendable.

(2) Let $G$ be either
$\bZ/p\bZ$ (in which case we assume $p \leq 19$),
$\bZ/l\bZ$ ($l$ a prime $\leq 11$ and $l \neq p$),
or $\bZ$.
Then the same conclusion holds, this time with a non-symplectic action.
\end{thm}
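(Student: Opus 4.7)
The approach rests on the framework established in Section \ref{sec:specialization}: for a proper smooth model $\cX$ of $X$ over $\cO_K$, the specialization map embeds $\NS(X)$ as a primitive sublattice of $\NS(\cX_k)$, and distinct smooth proper models correspond to distinct chambers in the decomposition of the positive cone of $\NS(\cX_k)_\bR$ cut out by those effective $(-2)$-classes of $\cX_k$ that do not lift to $X$. A subgroup $G \subset \Aut(X)$ is extendable if and only if the $G$-action on $\NS(\cX_k)$ fixes at least one such chamber. To produce non-extendable $G$-actions it therefore suffices to exhibit $(X,G)$ such that (a) $\NS(\cX_k)$ acquires new $(-2)$-classes beyond those coming from $\NS(X)$, and (b) the induced action of $G$ on $\NS(\cX_k)$ permutes the resulting chambers with no fixed point.

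For the $\bZ$-case in both parts, I would take $X$ to be an elliptic K3 surface over $K$ equipped with a section $P$ of infinite order in the Mordell--Weil group, arranged so that on the special fiber the fibration acquires additional reducible fibers (or fibers of higher Kodaira type) that are not present generically. Translation by $P$ is then a faithful symplectic automorphism of infinite order on $X$; on the enlarged $\NS(\cX_k)$ it acts as a translation of infinite order inside the affine Weyl group generated by the new fiber-component $(-2)$-classes, so every chamber has an infinite orbit and no smooth model is $\bZ$-stable. For the non-symplectic $\bZ$ assertion of part (2), one couples the translation with a suitable non-symplectic involution, e.g.\ the $-1$-involution on the Jacobian fibration.

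For the finite cyclic cases, the bounds $p \leq 7$ (symplectic), $p \leq 19$ (non-symplectic of order $p$), and $l \leq 11$ (non-symplectic of order $l \neq p$) lie within the Nikulin--Mukai classification of possible orders of automorphisms of complex K3 surfaces. For each relevant prime $q$ I would start from a complex K3 $Y$ realising such a $\bZ/q\bZ$-action, descend and spread it over a suitable $\cO_K$ of residue characteristic $p$, and arrange the reduction to be supersingular so that $\NS(\cX_k)$ becomes maximal (rank $22$) and supplies a rich collection of new $(-2)$-classes on which the induced action permutes chambers non-trivially. Natural candidate families are Kummer surfaces of CM abelian surfaces (for small $p$ and for the non-symplectic cases) and elliptic K3s with prescribed singular fibers on the special fiber.

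The principal difficulty is the verification, in each family, that the $G$-action on $\NS(\cX_k)$ genuinely fixes no chamber of the positive cone; equivalently, that $G$ has no fixed vector in the open ample cone of any smooth model. This reduces to a finite combinatorial check once the chamber structure is pinned down, and becomes tractable when $\cX_k$ is supersingular -- where $\NS(\cX_k)$ is an explicit lattice of discriminant $-p^{2\sigma_0}$ -- or when the elliptic fibration admits a small list of possible singular-fibre configurations. A secondary obstacle is arranging good reduction together with the $G$-action: in the elliptic examples this is automatic from a suitable Weierstrass model, while in the lifted complex-K3 examples one appeals to equivariant deformation theory, noting that the obstruction to equivariant lifting lies in $H^2(X,T_X)^G$ and vanishes in the explicit families used.
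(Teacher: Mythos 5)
There is a genuine gap, and it is located exactly where you park the ``principal difficulty.'' Your entire argument rests on the unproven claim that $G$ is extendable if and only if its action on $\NS(\cX_k)$ fixes a chamber of the positive cone cut out by the new effective $(-2)$-classes. Neither direction of this is free. The implication ``fixes a chamber $\Rightarrow$ extendable'' is essentially the hard content of the paper: it is proved there not by chamber combinatorics but by establishing $G$-equivariant flops and then a local theorem (Theorem \ref{thm:localresolution}) on $G$-equivariant simultaneous resolution of RDPs, whose proof is a long case-by-case normal-form analysis. The paper explicitly remarks (Remark \ref{rem:localresolution}) that a Weyl-group-torsor/chamber approach ``might'' work, but only modulo a recently announced and unpublished result of Shepherd-Barron in mixed characteristic; you cannot take it as given. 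The converse direction is what the paper replaces by two concrete obstructions: the local non-existence of equivariant resolutions at an RDP with non-symplectic stabilizer (used for $\bZ/l\bZ$, $l\neq p$), and the criterion that $g\neq\id$ with $\spen(g)=\id$ forces non-extendability (Corollary \ref{cor:criterion for non-ext}). Your proposal supplies neither a proof of the chamber criterion nor a substitute for it.

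The second, more concrete gap is that you miss the mechanism that actually produces the order-$p$ examples. Note first that Theorem \ref{thm:sympfin} forces any non-extendable \emph{symplectic} finite-order example to have order divisible by $p$, so ``lift a complex K3 with a $\bZ/q\bZ$-action and arrange supersingular reduction'' cannot work as stated for part (1). The paper's construction is to work over $K=\bQ_p(\zeta_p)$ and write down explicit projective RDP models on which $\sigma$ acts by multiplying coordinates by $p$-th roots of unity; since $\zeta_p\equiv 1$ modulo the maximal ideal of $\bZ_p[\zeta_p]$, one gets $\spen(\sigma)=\id$ while $\sigma\neq\id$, and non-extendability follows from the Torelli-type argument of Corollary \ref{cor:criterion for non-ext}(a). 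This is in direct tension with your picture: such a $\sigma$ does not act by a fixed-point-free permutation of chambers in any visible way, so your criterion would not detect these examples even if it were established. The same remark applies to the $\bZ$-case: the paper arranges the infinite-order section to specialize to the zero section plus fibral components, so the translation again specializes to the identity, rather than acting with infinite order on an affine Weyl group. Finally, every construction in your proposal is deferred (``I would take\ldots,'' ``natural candidates are\ldots''), including the verification you yourself identify as the crux, so as written this is a research plan rather than a proof.
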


Here a group of automorphisms of a K3 surface is said to be \emph{symplectic}
if it acts on the $1$-dimensional space $H^0(X, \Omega^2_{X/K})$ trivially.
It is known that if a symplectic (resp.\ non-symplectic) automorphism of a K3 surface in characteristic $0$ has a finite prime order $l$
then $l \leq 7$ (resp.\ $l \leq 19$).
So Theorem \ref{thm:non-extendable} gives examples in most of the cases where Theorem \ref{thm:sympfin} does not apply.
For automorphisms of orders $13$, $17$, and $19$, see Proposition \ref{prop:131719}.

Let us now explain the strategy of the proof.
Using generalizations of results of Liedtke--Matsumoto \cite{Liedtke--Matsumoto} on birational geometry of models of K3 surfaces to equivariant settings (Section \ref{sec:G-flops}), 
we reduce Theorem \ref{thm:sympfin} and a part of Theorem \ref{thm:non-extendable}
to the following local result on simultaneous equivariant resolution,
which may be of independent interest.
\begin{thm} \label{thm:localresolution}
Let $(B,\fm)$ be a flat local $\cO_K$-algebra
of relative dimension $2$ obtained as 
the localization of a finite type $\cO_K$-algebra at a maximal ideal,
with $B/\fm \cong k$, 
$B \otimes K$ smooth, and $B \otimes k$ an RDP (rational double point).
Let $G$ be a nontrivial finite group of order prime to $p$ acting on $B$ over $\cO_K$ faithfully.
Then $B$ admits a simultaneous $G$-equivariant resolution in the category of algebraic spaces after replacing $K$ by a finite extension
if and only if the $G$-action is symplectic (in the sense of Definition \ref{defn:symplectic}(\ref{defn:symplectic:3})).
\end{thm}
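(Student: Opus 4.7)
The plan is to prove both implications.

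\emph{Necessity.} Suppose $\pi : \tilde\cX \to \Spec B$ is a $G$-equivariant simultaneous resolution. Since $V_0 := B \otimes k$ is an RDP, it is Gorenstein with trivial dualizing sheaf near the singular point; combined with the smoothness of $B \otimes K$, this makes $\omega_{B/\cO_K}$ trivial in a neighborhood of the singularity. An RDP admits a crepant minimal resolution, so $\omega_{\tilde\cX/\cO_K} \cong \pi^* \omega_{B/\cO_K}$ is also trivial, yielding a nowhere-vanishing relative $2$-form $\omega$ on $\tilde\cX$. The $G$-action multiplies $\omega$ by a character $\chi : G \to \cO_K^\times$ (after possibly enlarging $K$) whose image lies in roots of unity of order prime to $p$. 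Such a $\chi$ is determined by its restriction to either fiber, so since Definition~\ref{defn:symplectic} identifies symplecticity with triviality of this character, the $G$-action is symplectic.

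\emph{Sufficiency.} This is the substantive direction. I would first reduce to the case that $B$ is strictly henselian local, with a single RDP of standard ADE type on the special fiber; this is legitimate because an equivariant resolution built in a strict-henselian neighborhood of each singular point extends canonically to the whole $\Spec B$. After enlarging $K$, one may write $V_0 \cong \Spec k[[x,y]]^H$ for a finite subgroup $H \subset \SL_2(k)$ of order prime to $p$. The symplectic hypothesis together with the prime-to-$p$ order of $G$ lifts the $G$-action on $V_0$ to a finite subgroup $\tilde G \subset \SL_2(k)$ normalizing $H$, with $\tilde G / H \cong G$ and $|\tilde G|$ prime to $p$. The minimal resolution $\tilde V_0 \to V_0$ then acquires a canonical $G$-equivariant structure, obtained by $\tilde G$-equivariantly blowing up the origin of $\bA^2_k$ and descending by $H$.

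To propagate this to the family, I would invoke the Brieskorn--Tyurina theorem: the semi-universal deformation space of $V_0$ admits a finite Galois cover by the Artin component, with Galois group the Weyl group $W$ of the ADE type, over which a universal simultaneous resolution exists. The smoothing $B$ corresponds to an $\cO_K$-point of the semi-universal space, and after a finite extension $K'/K$ this lifts to an $\cO_{K'}$-point of the Artin component; pulling back the universal resolution then produces a simultaneous resolution of $B \otimes_{\cO_K} \cO_{K'}$. The main obstacle is to choose this lift $G$-equivariantly. The $G$-action on the semi-universal space fixes the $\cO_K$-point defined by $B$, and the set of lifts to the Artin component forms a torsor under $W$; the symplectic hypothesis—interpreted concretely via the lift $\tilde G \subset \SL_2(k)$—picks out a canonical $G$-invariant Weyl chamber, killing the $W$-ambiguity. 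Once the $G$-equivariant lift is secured, pulling back the universal resolution yields the desired $G$-equivariant simultaneous resolution of $B_{\cO_{K'}}$.
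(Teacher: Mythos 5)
Both directions of your proposal have genuine gaps.

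For necessity, your argument never actually uses the existence of the equivariant resolution. The nowhere-vanishing relative $2$-form and the character $\chi$ by which $G$ scales it exist already on $\Spec B$ minus the closed point --- that is exactly how Definition \ref{defn:symplectic} is set up --- and nothing in your paragraph derives $\chi = 1$ from the hypothesis that a $G$-equivariant simultaneous resolution exists. The sentence ``such a $\chi$ is determined by its restriction to either fiber'' is true but proves nothing: the restriction to the special fiber is precisely the character whose triviality is in question. The paper's obstruction is combinatorial, not differential: it reduces to the cases $A_1$ and $A_2$ with $G$ transitive on the exceptional curves, observes that a $G$-resolution $\pi$ forces the $G$-action on $\Cl(B^\hens)$ to be a quotient of the action on $(R^1 \pi_* \cO^*_{\cX})_{\bar x}$, which is generated by the $m \leq 2$ exceptional curve classes, and then computes the action on $\Cl(B^\hens)$ explicitly (for $A_1$, a non-symplectic $g$ acts by $-1$ on $\Cl(B^\hens) \cong \bZ$, i.e.\ it swaps the two small resolutions) to obtain a contradiction.

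For sufficiency there are two problems. First, the presentation $V_0 \cong \Spec k[[x,y]]^H$ with $H \subset \SL_2(k)$ of order prime to $p$ is unavailable in exactly the characteristics the theorem must cover: the singularities $D_4^0, D_4^1$ in $p=2$, $E_6^0, E_6^1$ in $p=3$, and $A_m$ with $p \mid m+1$ all occur in the symplectic classification (Proposition \ref{prop:localresolution}) and are not prime-to-$p$ linear quotients; Remark \ref{rem:RDP in char 0} makes precisely this caveat. Second, and more seriously, the step where ``the symplectic hypothesis picks out a canonical $G$-invariant Weyl chamber, killing the $W$-ambiguity'' asserts the entire content of the theorem without a mechanism. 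The set of simultaneous resolutions is a $W$-torsor on which $G$ acts, and a $G$-fixed point need not exist --- for $A_1$ with a non-symplectic involution it does not, since that involution exchanges the two resolutions --- so the whole difficulty is to show that symplecticity forces a fixed point. The paper flags your route in Remark \ref{rem:localresolution} as something one ``might be able to'' carry out using Shepherd-Barron's announced (not yet available) torsor result, and instead proves sufficiency by classifying all symplectic pairs $(G,B)$ into explicit normal forms and exhibiting, case by case, $G$-invariant (or $G$-equivariantly permuted) ideals whose blow-ups give partial equivariant resolutions, concluding by induction.
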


Here a simultaneous resolution is a proper morphism $\cX \to \Spec B$
which is an isomorphism on the generic fiber and the minimal resolution on the special fiber.
We prove Theorem \ref{thm:localresolution} in Section \ref{sec:localresolution}
by giving a classification of symplectic actions (Proposition \ref{prop:local action}) and case-by-case explicit simultaneous resolutions (Proposition \ref{prop:local resolution}).

Currently we do not have any explanation why symplecticness arises as a key condition.
It may be related to the fact that the RDPs in characteristic $0$ are precisely 
the quotient singularities by ``symplectic'' group actions (cf.\ proof of Proposition \ref{prop:symplecticRDP}).

To prove other cases of Theorem \ref{thm:non-extendable} 
we define in Section \ref{sec:specialization} the \emph{specialization map} $\spen \colon \Aut(X) \to \Aut(\cX_0)$ 
($\cX_0$ is the special fiber of $\cX$) and show that
if $g$ is extendable 
then the characteristic polynomials of $g^*$ and $\spen(g)^*$ on $\Het^2$ should coincide (Proposition \ref{prop:criterion for non-ext}).
In Section \ref{sec:proof} we give examples in which these polynomials differ.

As a side trip,
we study this specialization map $\spen \colon \Aut(X) \to \Aut(\cX_0)$.
As will be seen in Section \ref{sec:kersp}, $\Ker (\spen)$ may have nontrivial members, both of finite and infinite orders.
We show that if a finite order automorphism is in $\Ker (\spen)$ then its order is a power of the residue characteristic $p$ (Proposition \ref{prop:finiteorder}).
Such automorphisms are related to actions of infinitesimal group schemes such as $\mu_p$ and $\alpha_p$, which we will investigate in future papers.
In Section \ref{sec:3} 
we also give an example where the characteristic polynomial of the action of $\spen(g)^*$ on $\Het^2$
is irreducible (which never happens on $\Het^2$ of a K3 surface in characteristic $0$).

\subsection*{Acknowledgments}
I thank Keiji Oguiso for the interesting question on extendability of automorphism groups, from which this work arose.
I thank H\'el\`ene Esnault, Christian Liedtke, Yuji Odaka, and Nicholas Shepherd-Barron for their helpful comments.
I appreciate the kind hospitality of Institut de Math\'ematiques de Jussieu-Paris Rive Gauche where a large part of this work was done.

\section{Specialization of automorphisms of K3 surfaces} \label{sec:specialization}

\begin{defn}
Let $X$ be a proper surface over $K$.

(1) A \emph{model} of $X$ over $\cO_K$ is a proper flat algebraic space $\cX$ over $\cO_K$ 
equipped with an isomorphism $\cX \times_{\cO_K} K \isomto X$.
A \emph{projective smooth model} is a model that is projective and smooth over $\cO_K$, and so on.
Note that a model may not be a scheme, 
but a projective model is always a scheme.

(2) We say that $X$ has \emph{good reduction} if $X$ admits a proper smooth model. 
We say that $X$ has \emph{potential good reduction} if $X_{K'}$ has good reduction for some finite extension $K'/K$.

(3) Let $G$ be a subgroup of $\Aut(X)$.
A \emph{$G$-model}
is a model of $X$ equipped with a $G$-action compatible with that of $X$.
If $G$ is generated by a single element $g$, we also call it a \emph{$g$-model}.

(4) We say that $G \subset \Aut(X)$ (resp.\ $g \in \Aut(X)$) is \emph{extendable} 
if, after replacing $K$ by a finite extension,
$X$ admits a proper smooth $G$- (resp.\ $g$-) model. 
\end{defn}

We also introduce a related notion of \emph{specialization} of automorphisms.

\begin{prop} \label{prop:def of sp}
Let $X$ be a K3 surface over $K$ having good reduction. 

(1) For any proper smooth model $\cX$ of $X$, 
an automorphism $g$ of $X$ extend to a unique birational (rational) self-map of $\cX$
and its locus of indeterminacy is a closed subspace of codimension at least $2$.
The induced birational self-map on the special fiber $\cX_0$ is in fact an automorphism,
which we write $\spen(g)$ and call the specialization of $g$.

(2) Both the special fiber $\cX_0$ and the specialization morphism $\spen \colon \Aut(X) \to \Aut(\cX_0)$
are independent of the choice of the model $\cX$. 
This map $\spen$ is a group homomorphism.
\end{prop}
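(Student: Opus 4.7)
The plan is to verify the two parts in turn, building on two general observations: (a) a proper smooth model $\cX$ of $X$ has trivial relative canonical sheaf, and its special fiber $\cX_0$ is again a K3 surface; and (b) a birational self-map of the smooth threefold $\cX$ that is an isomorphism on the generic fiber is automatically an isomorphism in codimension one. Fact (a) I would deduce from the triviality of $\omega_{X/K}$, the vanishing $H^1(X,\cO_X)=0$ (which makes $\Pic(\cX)\to\Pic(X)$ injective and hence forces $\omega_{\cX/\cO_K}$ to be trivial), together with upper semicontinuity of $h^1(\cO)$. Fact (b) is a mixed-characteristic Calabi--Yau MMP statement that I would invoke from \cite{Liedtke--Matsumoto}: the only vertical prime divisor on $\cX$ is $\cX_0$, and it cannot be contracted because a common resolution of the graph closure matches up the exceptional loci of the two projections crepantly, while $\cX_0$ has its own strict transform appearing on both sides.

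Granted (a) and (b), for (1) I would form the closure $\Gamma\subset\cX\times_{\cO_K}\cX$ of the graph of $g\in\Aut(X)$; the projection $p_1\colon\Gamma\to\cX$ is proper and birational, and by (b) an isomorphism in codimension one. Hence the induced rational self-map $\tilde g\colon\cX\dashrightarrow\cX$ has indeterminacy of codimension $\geq 2$, and it is the unique such extension, since it already agrees with $g$ on the dense open $X\subset\cX$ and $\cX$ is separated. As this indeterminacy meets the divisor $\cX_0$ only in finitely many points, $\tilde g$ restricts to a rational self-map of the smooth minimal surface $\cX_0$, which automatically extends to a morphism; running the same argument for $g^{-1}$ produces an inverse, so $\spen(g)\in\Aut(\cX_0)$.

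For (2), independence of the model is obtained by applying the same graph-closure construction to the identity of $X$ with two proper smooth models $\cX,\cX'$: this yields a canonical birational map $\cX\dashrightarrow\cX'$, iso in codimension one, which specializes to a birational isomorphism $\cX_0\isomto\cX'_0$ between smooth K3 surfaces. This canonical identification conjugates the two specialization maps. For the homomorphism property, the composition $\tilde g_1\circ\tilde g_2$ of the birational extensions on their common open set of codim-$\geq 2$ complement is itself a birational extension of $g_1g_2$, hence by the uniqueness part of (1) equals $\widetilde{g_1g_2}$; restricting to $\cX_0$ gives $\spen(g_1)\circ\spen(g_2)=\spen(g_1g_2)$, since two morphisms of the separated scheme $\cX_0$ agreeing on a dense open subset coincide globally.

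The main obstacle is fact (b): without the non-contraction of divisors, the indeterminacy of $\tilde g$ could a priori be divisorial, the restriction to $\cX_0$ would not be defined off a finite set, and the entire construction of $\spen$ would collapse. This is precisely the point at which the birational geometry of K3 models developed in \cite{Liedtke--Matsumoto} is used; everything else is routine bookkeeping given (a) and (b).
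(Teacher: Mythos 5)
Your proposal is correct and follows essentially the same route as the paper: the whole argument reduces to the Liedtke--Matsumoto result that any two proper smooth models of $X$ (here $\cX$ and the model obtained by pulling back along $g$) are isomorphic outside closed subspaces of codimension $\geq 2$, after which minimality of the K3 special fiber upgrades the induced birational self-map of $\cX_0$ to an automorphism, and uniqueness gives both model-independence and the homomorphism property. Your ``fact (b)'' is exactly the content of \cite{Liedtke--Matsumoto}*{Proposition 3.3} that the paper cites, so the two proofs differ only in presentation (graph closure versus normalization in the pullback).
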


\begin{proof}
(1)
Take $g \in \Aut(X)$.
Let $g^* \cX$ be the normalization of $\cX$ in the pullback $g \colon X \to X$.
Then $g^* \cX$ is another proper smooth model and it is connected to $\cX$ by a finite number of flopping contractions (\cite{Liedtke--Matsumoto}*{Proposition 4.7}).
It follows that $g$ induces a birational self-map on $\cX$ with indeterminacy of codimension at least $2$.

The restriction of $g$ to the special fiber $\cX_0$ is a birational self-map, and in fact an isomorphism since $\cX_0$ is minimal.

(2)
This again follows from the fact that two proper smooth models of $X$ are isomorphic outside subspaces of codimension $\geq 2$.
\end{proof}

\begin{prop} \label{prop:criterion for non-ext}
Let $X$ be a K3 surface over $K$ having good reduction.
Let $g \in \Aut(X)$ 
and let $\spen(g) \in \Aut(\cX_0)$ be its specialization.
Assume that the characteristic polynomials of 
$g^*$ and $\spen(g)^*$ on $\Het^2(X_{\overline K}, \bQ_l)$ and $\Het^2((\cX_0)_{\overline k}, \bQ_l)$ do not coincide.
Then $g$ is not extendable. 
\end{prop}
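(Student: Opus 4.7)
The plan is to prove the contrapositive: assuming $g$ is extendable, I show that the characteristic polynomials of $g^*$ and $\spen(g)^*$ coincide. By hypothesis, after passing to a finite extension $K'/K$ there exists a proper smooth $g$-model $\cX'$ of $X_{K'}$ over $\cO_{K'}$. The replacement of $K$ by $K'$ does not affect either of the two characteristic polynomials in question (one may choose an algebraic closure $\overline K$ containing $K'$, so the $\bQ_l$-module $\Het^2(X_{\overline K}, \bQ_l)$ and its $g^*$-action are unchanged, and similarly for the special fiber), so we may freely work over $K'$ and $k'$. On $\cX'$ the automorphism $g$ extends to a genuine automorphism $\tilde g$, which restricts to an automorphism $\bar g \in \Aut(\cX'_0)$ of the special fiber.

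The first key step is to identify $\bar g$ with $\spen(g)$. I would compare $\cX'$ with the base-changed model $\cX \times_{\cO_K} \cO_{K'}$, which is also a proper smooth model of $X_{K'}$ whose special fiber is $\cX_0 \times_k k'$ and whose specialization of $g$ is $\spen(g) \times_k k'$ by construction. By Proposition \ref{prop:def of sp}(2), both the special fiber and the specialization homomorphism are independent of the chosen proper smooth model, so $\bar g$ corresponds to $\spen(g) \times_k k'$ under the canonical identification of the two special fibers. The ``codimension $\geq 2$'' caveat in part (1) of that proposition is automatically upgraded to a global isomorphism of special fibers here, because any birational map between smooth minimal K3 surfaces is an isomorphism.

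The second key step is to apply the smooth and proper base change theorem to the morphism $\cX' \to \Spec \cO_{K'}$, yielding a natural $\bQ_l$-linear isomorphism
$$\Het^2(\cX'_{\overline{K'}}, \bQ_l) \isomto \Het^2((\cX'_0)_{\overline{k'}}, \bQ_l)$$
that is equivariant for the action of $\tilde g$. The left-hand side carries the $g^*$-action and the right-hand side the $\bar g^* = \spen(g)^*$-action, so their characteristic polynomials coincide, contradicting the hypothesis. Of the three ingredients (stability of $\ell$-adic cohomology under algebraically closed base change, Proposition \ref{prop:def of sp}(2), and smooth-proper base change for algebraic spaces), none is really difficult; the main delicate point is the identification in Step 1, which ultimately rests on the fact that the chain of flops connecting $\cX'$ and $\cX \times_{\cO_K} \cO_{K'}$ restricts to an isomorphism on the smooth minimal special fibers.
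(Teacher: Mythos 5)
Your proposal is correct and follows essentially the same route as the paper's proof: pass to the contrapositive, apply smooth--proper base change to a proper smooth $g$-model to get a $g$-equivariant isomorphism $\Het^2(X_{\overline K}, \bQ_l) \isomto \Het^2((\cX_0)_{\overline k}, \bQ_l)$, and identify the restriction of $g$ to the special fiber with $\spen(g)$ via the model-independence of Proposition \ref{prop:def of sp}(2). You simply spell out more carefully the two points the paper treats as immediate (harmlessness of the finite extension of $K$, and the identification $\bar g = \spen(g)$).
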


\begin{proof}
The proper smooth base change theorem 
induces, for each proper smooth model $\cX$,
an isomorphism between $\Het^2(X_{\overline K}, \bQ_l)$ and $\Het^2((\cX_0)_{\overline k}, \bQ_l)$.
In general this isomorphism depends on the choice of the model.
If $\cX$ admits a $g$-action then this isomorphism is $g$-equivariant,
and then the characteristic polynomials of $(g \rvert X)^*$ and $(g \rvert \cX_0)^*$ coincide.
(We have $g \rvert \cX_0 = \spen(g \rvert X)$ by definition.)
\end{proof}

\begin{rem}
This proposition cannot give a counterexample to Theorem \ref{thm:sympfin} since,
under the assumption of the theorem, the characteristic polynomials always coincide
by Lemma \ref{lem:sympfin} and Proposition \ref{prop:finiteorder}.

We do not know whether the converse holds, i.e.\ whether the coincidence of characteristic polynomials implies extendability.
\end{rem}

\begin{cor} \label{cor:criterion for non-ext}\ 
\begin{enumerate}
\item \label{item:specialization is trivial}
Let $X$ and $g$ as in Proposition \ref{prop:criterion for non-ext}.
Assume $g \neq \id$ and $\spen(g) = \id$.
Then $g$ does not extend to any proper smooth model of $X$.

\item \label{item:specialization is irreducible}
Let $X_0$ be a K3 surface over $k$ and let $g_0 \in \Aut(X_0)$.
Assume that the characteristic polynomial of $g_0^*$ on $\Het^2$ is irreducible over $\bZ$. 
Then $g_0$ is not the restriction of any automorphism of 
any proper smooth model $\cX$ of any K3 surface $X$ over any $K$ (of characteristic $0$).
\end{enumerate}
\end{cor}

\begin{proof}
(1)
By the Torelli theorem, nontrivial $g$ acts nontrivially on $\Het^2$. 
Hence the assertion follows from Proposition \ref{prop:criterion for non-ext}.

(2)
In characteristic $0$ the characteristic polynomial cannot be irreducible since
both $\NS(X) \otimes \bQ_l \subset \Het^2$ and its orthogonal complement $T$ are nontrivial subspaces.
\end{proof}

\begin{rem}
If the condition of (2) is satisfied
then $X_0$ is supersingular and 
the characteristic polynomial is a Salem polynomial (Lemma \ref{lem:Salem}).
We will see in Section \ref{sec:3}
that such $g_0$ still may be the specialization of an automorphism in characteristic $0$. 
\end{rem}

In practice it is easier to compute the specialization map if we use more general models than the smooth ones. 
\begin{defn} \label{def:RDP}
(1) An \emph{RDP surface} over a field $F$ is a surface $X$ such that 
$X_{\overline F}$ has only RDP (rational double point) singularities.

(2) An \emph{RDP K3 surface} over a field is a proper RDP surface
whose minimal resolution is a K3 surface. 
(In particular, a smooth K3 surface is an RDP K3 surface by definition.)

(3) A \emph{proper RDP model} of an RDP K3 surface
is a proper model whose special fiber is an RDP surface.
(The special fiber is then an RDP K3 surface. This follows from the next lemma and the classification of degeneration of K3 surfaces.)

(4) A \emph{simultaneous resolution}
of a proper RDP model $\cX$ of an RDP K3 surface 
is a proper morphism $f \colon \cY \to \cX$ from an algebraic space
that is the minimal resolution on each fiber.
\end{defn}

Note that for an RDP K3 surface $X$ there is a canonical injection $\Aut(X) \to \Aut(\tilde X)$,
where $\tilde X$ is the minimal resolution.

\begin{lem} \label{lem:resolve rdp}
If an RDP K3 surface $X$ admits a proper RDP model, 
then the minimal resolution $\tilde X$ of $X$ has potential good reduction.

More precisely,
if $\cX$ is a proper RDP model of $X$ over $\cO_K$,
then after extending $K$ there exists a simultaneous resolution $\cY \to \cX$ 
and then $\cY$ is a proper smooth model of $\tilde X$.
\end{lem}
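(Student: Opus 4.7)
The plan is to reduce the problem to a local question at each singular point of $\cX$ and then invoke the Brieskorn--Artin theorem on simultaneous resolution of rational double points in families. The existence of the simultaneous resolution is global, but it is constructed by étale-locally patching a well-known local construction.

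First, I would reduce the global problem to a local one. Since $\cX_0$ is an RDP surface, it has only finitely many singular closed points; above each of these may sit singularities of the generic fiber $X$ as well. After a finite extension of $K$, I may arrange that every singular closed point of $\cX$ has residue field $k$ and that each singularity of $\cX_0$ is ``split'' in the sense that the minimal resolution, its exceptional dual graph, and the associated reflection-group action are defined over $k$ (similarly for the generic fiber singularities of $X$); this is a standard iterative extension using only that there are finitely many singular points.

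Next, fix such a singular closed point $x$ and let $(B,\fm) = \cO_{\cX,x}^h$ be the strict henselization. Then $B$ is flat over $\cO_K$ of relative dimension $2$, with $B \otimes k$ an RDP and $B \otimes K$ either smooth or also having RDP singularities. By the classical Brieskorn--Artin theorem on simultaneous resolution of rational double points, extended to the mixed characteristic setting (this is essentially the non-equivariant content of Theorem \ref{thm:localresolution}, the symplectic condition being vacuous when $G$ is trivial), there exists, after a further finite extension of $K$, a proper birational morphism $\cY_x \to \Spec B$ from an algebraic space that restricts to the minimal resolution of each geometric fiber. Taking a common finite extension of $K$ that works for every singular point simultaneously, I obtain local simultaneous resolutions around each such $x$.

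Finally, I would glue these local pieces with the identity on the smooth locus of $\cX$ to obtain a proper morphism $\cY \to \cX$ of algebraic spaces which is a simultaneous resolution. Patching is automatic because on the overlaps (the smooth loci) both pieces are canonically identified with $\cX$ itself; it is in this step that algebraic spaces are genuinely needed, since Brieskorn's local resolutions generally do not descend to schemes. To conclude that $\cY$ is a proper smooth model of $\tilde X$: properness follows by composing $\cY \to \cX \to \Spec \cO_K$; flatness over $\cO_K$ is étale-local, so follows from flatness of $\cX$ together with flatness of the simultaneous resolution; the generic fiber is the minimal resolution of $X$, namely $\tilde X$; the special fiber is a minimal resolution of the RDP surface $\cX_0$ and is therefore smooth. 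Flatness together with smooth fibers gives smoothness over $\cO_K$. The main obstacle is the invocation of the Brieskorn--Artin simultaneous resolution in mixed characteristic; the gluing and the verification of smoothness of $\cY$ are comparatively routine once the local construction is in hand.
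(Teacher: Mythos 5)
Your overall architecture ultimately rests on the same input as the paper's proof, namely Artin's simultaneous resolution theorem for families of RDP surfaces over an excellent Dedekind base; the paper simply applies it globally to $\cX \to \Spec \cO_K$ rather than henselian-locally followed by gluing (the gluing, incidentally, requires the small descent argument from the henselization to an honest \'etale neighbourhood that appears in the proof of part (3) of Proposition \ref{prop:localresolution}, but that is indeed routine). The one place where you gloss over a real step is the treatment of the RDPs of the \emph{generic} fibre $X$. Your local input is described as ``essentially the non-equivariant content of Theorem \ref{thm:localresolution}'', but that theorem assumes $B \otimes K$ smooth, so it says nothing about the points of $\cX_0$ into which singular points of $X$ specialize; there the simultaneous resolution must restrict to the minimal resolution of a \emph{singular} generic fibre rather than to an isomorphism, and asserting that ``the classical Brieskorn--Artin theorem, extended to mixed characteristic'' covers this is precisely the point that needs an argument or a precise citation. (Also, the relevant locus is the non-smooth locus of $\cX \to \Spec\cO_K$ rather than the singular locus of $\cX$: the total space can be regular at an RDP of the special fibre.)

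The paper supplies the missing step by a preliminary reduction: after extending $K$ so that the RDPs of $X$ are $K$-rational, it blows up $\cX$ along the Zariski closure $Z \cong \Spec\cO_K$ of such a point. Since $Z$ is flat over $\cO_K$, this blow-up restricts to the point blow-up on each fibre, so the result is again a proper RDP model of an RDP K3 surface, now with strictly smaller generic-fibre singularities; iterating reduces to the case where $X$ is smooth, and only then is Artin's theorem invoked (at which stage the finite surjective base change it produces is dominated by $\Spec\cO_{K'}$ for a finite extension $K'/K$, which is what ``after extending $K$'' means). You should either incorporate this reduction or verify carefully that the version of the Brieskorn--Artin theorem you invoke genuinely yields the minimal resolution on a singular generic fibre as well as on the special one.
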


\begin{proof}
By extending $K$, we may assume that all singular points of $X$ are $K$-rational.
If $X$ is not smooth, take an RDP $x \in X$,
and let $\pi \colon \cX' \to \cX$ be the blow-up at the Zariski closure $Z$ of $\{ x \}$.
Then $Z \cap \cX_0$ consists of an RDP $ x_0 $ and
the restriction of $\pi$ on the generic (resp.\ special fiber) is the blow-up at $x$ (resp.\ $x_0$).
Hence $\cX'$ is again a proper RDP model of an RDP K3 surface.
Repeating this, we may assume the generic fiber $X$ is smooth.

If the generic fiber is smooth,
then \cite{Artin:brieskorn}*{Theorem 2} gives a (non-canonical) simultaneous resolution.
\end{proof}

\begin{prop} \label{prop:compute sp}
Let $\cX_1,\cX_2$ be proper RDP models of RDP K3 surfaces $X_1,X_2$
and $Z_i \subset \cX_i$ closed subspaces that do not contain the special fiber $(\cX_i)_0$.
Let $g \colon \cX_1 \setminus Z_1 \to \cX_2 \setminus Z_2$
be a birational morphism.
Then the specialization of the induced automorphism $\tilde X_1 \isomto \tilde X_2$
is the automorphism induced by 
$g \restrictedto{(\cX_1 \setminus Z_1)_0} \colon (\cX_1 \setminus Z_1)_0 \to (\cX_2 \setminus Z_2)_0$.
\end{prop}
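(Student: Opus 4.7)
The plan is to reduce to the case of proper smooth models, where the specialization map is defined as in Proposition \ref{prop:def of sp}, by invoking Lemma \ref{lem:resolve rdp} to obtain simultaneous resolutions on both sides.

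First, after replacing $K$ by a finite extension, I would apply Lemma \ref{lem:resolve rdp} to get simultaneous resolutions $\pi_i \colon \cY_i \to \cX_i$ for $i = 1, 2$. Each $\cY_i$ is a proper smooth model of $\tilde X_i$, and the special fiber $(\cY_i)_0$ is canonically identified with the minimal resolution $\widetilde{(\cX_i)_0}$. Since simultaneous resolutions are isomorphisms over the smooth locus of the target, the composite
\[
\pi_2^{-1} \circ g \circ \pi_1 \colon \pi_1^{-1}(\cX_1 \setminus Z_1) \dashrightarrow \cY_2
\]
is a birational map between proper smooth models of K3 surfaces, and by the argument in the proof of Proposition \ref{prop:def of sp} (using \cite{Liedtke--Matsumoto}*{Proposition 3.3}) it extends to a birational map $\tilde g \colon \cY_1 \dashrightarrow \cY_2$ whose indeterminacy has codimension $\geq 2$. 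By the very definition in Proposition \ref{prop:def of sp}, the restriction $\tilde g\rvert_{(\cY_1)_0}$ is the specialization of the induced isomorphism $\tilde X_1 \isomto \tilde X_2$.

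Second, I would identify this restriction with the automorphism $\phi \colon \widetilde{(\cX_1)_0} \isomto \widetilde{(\cX_2)_0}$ induced on minimal resolutions by $g_0 := g\rvert_{(\cX_1 \setminus Z_1)_0}$. The key is to exhibit a dense open subset of $(\cY_1)_0 = \widetilde{(\cX_1)_0}$ on which both maps agree. Let $U \subset (\cX_1)_0$ be the intersection of the smooth locus of $(\cX_1)_0$ with $(\cX_1 \setminus Z_1)_0$ and with the preimage under $g_0$ of the smooth locus of $(\cX_2)_0$. Because $Z_1$ does not contain $(\cX_1)_0$ and both $(\cX_i)_0$ have only finitely many RDPs, the set $U$ is dense in $(\cX_1)_0$, and $\pi_1^{-1}(U)$ is dense in $(\cY_1)_0$. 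Over $\pi_1^{-1}(U)$ both $\tilde g\rvert_{(\cY_1)_0}$ and $\phi$ are given by $\pi_2^{-1} \circ g_0 \circ \pi_1$, so they agree there; being morphisms between proper smooth surfaces agreeing on a dense open, they coincide globally.

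The main (and essentially only) subtlety is the extension of $\pi_2^{-1} \circ g \circ \pi_1$ to a birational map $\tilde g$ of proper smooth models with indeterminacy of codimension $\geq 2$; once this is granted, the rest is a standard density argument. This extension step is precisely what Proposition \ref{prop:def of sp} already accomplishes when the source and target are equal, and the same proof using flopping contractions between proper smooth K3 models applies verbatim to an isomorphism of two a priori different K3 surfaces.
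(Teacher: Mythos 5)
Your proposal is correct and follows essentially the same route as the paper: both pass to simultaneous resolutions $\cY_i \to \cX_i$ (so that one is reduced to comparing proper smooth models, which agree outside codimension $\geq 2$ by \cite{Liedtke--Matsumoto}*{Proposition 3.3}), and then identify the specialization with the map induced by $g$ on the special fiber. The paper phrases the reduction by absorbing the exceptional loci into the $Z_i$ and identifying $\cX_1$ with $\cX_2$, whereas you spell out the density argument on the special fiber explicitly, but the substance is the same.
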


\begin{proof}
Proper RDP models $\cX_i$ have simultaneous resolutions $\cY_i \to \cX_i$.
By adding the exceptional loci of these morphisms into $Z_i$,
we may assume that $\cX_i$ themselves are smooth.
Since $\cX_1$ and $\cX_2$ are isomorphic outside closed subspaces of codimension $\geq 2$ (\cite{Liedtke--Matsumoto}*{Proposition 4.7}),
we may assume $\cX_1 = \cX_2$.
Then the birational self-map of $\cX_1$ in Proposition \ref{prop:def of sp} is the one induced by $g$.
\end{proof}

We also need the relation between $\Omega^2$ of the fibers of proper RDP models.

\begin{lem} \label{lem:Omega2-CI}
Let $(C,\fn)$ an $m$-dimensional local ring of the (complete intersection) form $C = k[x_1, \ldots, x_{n+m}]_0 / (F_1, \ldots, F_n)$
where $_0$ is the localization at the origin,
and assume $U = \Spec C \setminus \{ \fn \}$ is smooth.
Then there exists a unique element $\omega \in \Gamma(U, \Omega^m_{C/k})$ such that for any $\sigma \in \fS_{n+m}$ the equality 
$\sign(\sigma) \det((F_j)_{x_{\sigma(i)}})_{i,j=1}^{n} \omega = dx_{\sigma(n+1)} \wedge \cdots \wedge dx_{\sigma(n+m)}$ holds,
and such $\omega$ generates $\Omega^m_{C/k} \restrictedto{U}$.

The same holds if we replace $k[\ldots]_0$ with
its Henselization $k\polyHens{\ldots}$ or completion $k[[\ldots]]$.
\end{lem}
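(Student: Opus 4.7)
The plan is to exhibit $\omega$ via the conormal exact sequence of the complete intersection $C = A/I$, where $A$ denotes $k[x_1,\ldots,x_{n+m}]_0$ (or its Henselization or completion) and $I = (F_1,\ldots,F_n)$. Since $\dim A = n+m$ and $\dim C = m$, the elements $F_1,\ldots,F_n$ form a regular sequence, so $I/I^2$ is $C$-free of rank $n$ with basis $\bar F_1,\ldots,\bar F_n$, and the conormal sequence
$$0 \to I/I^2 \xrightarrow{\bar F_j \mapsto dF_j} \Omega^1_{A/k} \otimes_A C \to \Omega^1_{C/k} \to 0$$
is exact. Restricting to $U$, the three terms become locally free of ranks $n$, $n+m$, $m$; taking top exterior powers gives a canonical isomorphism of invertible sheaves
$$\varphi \colon \wedge^n(I/I^2)|_U \otimes \Omega^m_{C/k}|_U \isomto \wedge^{n+m}(\Omega^1_{A/k} \otimes_A C)|_U,$$
sending $\bar F_1 \wedge \cdots \wedge \bar F_n \otimes \alpha$ to $dF_1 \wedge \cdots \wedge dF_n \wedge \tilde \alpha$ for any lift $\tilde\alpha$ of $\alpha$. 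I would then define $\omega \in \Omega^m_{C/k}|_U$ as the unique element with $\varphi(\bar F_1 \wedge \cdots \wedge \bar F_n \otimes \omega) = dx_1 \wedge \cdots \wedge dx_{n+m}$; since $\varphi$ is an isomorphism and the right-hand side generates the target, $\omega$ exists and generates $\Omega^m_{C/k}|_U$.

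To verify the advertised identities, I expand $dF_j = \sum_i (F_j)_{x_i}\, dx_i$ and compute in $\wedge^{n+m}(\Omega^1_{A/k} \otimes_A C)$ that, for each $\sigma \in \fS_{n+m}$,
$$dF_1 \wedge \cdots \wedge dF_n \wedge dx_{\sigma(n+1)} \wedge \cdots \wedge dx_{\sigma(n+m)} = \sgn(\sigma) \det\bigl((F_j)_{x_{\sigma(i)}}\bigr)_{i,j=1}^n \, dx_1 \wedge \cdots \wedge dx_{n+m},$$
since only summands of the $dF_j$ with indices in $\{\sigma(1),\ldots,\sigma(n)\}$ survive the wedge, and one reorders $dx_{\sigma(1)} \wedge \cdots \wedge dx_{\sigma(n+m)}$ to the standard basis at the end. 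Under $\varphi$, the left-hand side is the image of $\bar F_1 \wedge \cdots \wedge \bar F_n \otimes (dx_{\sigma(n+1)} \wedge \cdots \wedge dx_{\sigma(n+m)})$, while the right-hand side equals $\sgn(\sigma)\det(\cdots)$ times $\varphi(\bar F_1 \wedge \cdots \wedge \bar F_n \otimes \omega)$. Cancelling the rank-one generator $\bar F_1 \wedge \cdots \wedge \bar F_n$ yields the claimed identity in $\Omega^m_{C/k}|_U$.

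For the uniqueness clause of the statement, the smoothness of $U$ together with the Jacobian criterion implies that at each point of $U$ at least one $\sigma$ makes $\det((F_j)_{x_{\sigma(i)}})_{i,j=1}^n$ a unit in the local ring, so the corresponding identity alone determines $\omega$ in a neighborhood; hence such $\omega$ is globally unique on $U$. The Henselian and complete versions proceed verbatim, since the regular-sequence property, exactness of the conormal sequence, and the Jacobian criterion for smoothness on $U$ all survive these base changes. There is no essential obstacle in this proof; the only delicate point is sign bookkeeping in the exterior algebra, but the $\sgn(\sigma)$ factor arises precisely from the reordering $dx_{\sigma(1)} \wedge \cdots \wedge dx_{\sigma(n+m)} = \sgn(\sigma)\, dx_1 \wedge \cdots \wedge dx_{n+m}$ performed at the end of the computation.
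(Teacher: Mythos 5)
Your proof is correct, and it is the natural detailed expansion of the paper's own proof, which simply says ``Straightforward'' and records exactly the fact you use for uniqueness (that at every point of $U$ some minor $\det((F_j)_{x_{\sigma(i)}})$ is invertible). The determinant-of-the-conormal-sequence formalism is a clean way to organize the existence and generation claims, and the sign bookkeeping via $dx_{\sigma(1)} \wedge \cdots \wedge dx_{\sigma(n+m)} = \sgn(\sigma)\, dx_1 \wedge \cdots \wedge dx_{n+m}$ checks out.
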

Here $F_{x_i}$ is defined by the equality
$dF = \sum_i F_{x_i} dx_i$ in $\Omega^1_{k[\ldots]_0/k}$ (or in ...).
This coincides with the termwise partial differentiation of formal power series.

\begin{proof}
Straightforward.
Note that at every point on $U$, we have $\det((F_j)_{x_{\sigma(i)}}) \neq 0$ for some $\sigma \in \fS_{n+m}$.
\end{proof}

\begin{lem} \label{lem:Omega2-RDP}
Let $(C, \fn)$ be a $2$-dimensional local ring over a field $k$
and assume it is an RDP.
Define $U$ as above.

(1) $\Omega^2_{C/k} \restrictedto{U}$ is trivial,
and hence $H^0(U, \Omega^2_{C/k}) \cong H^0(U, \cO) = C$.

(2)
Let $\pi \colon X \to \Spec C$ be the minimal resolution.
Then $H^0(X, \Omega^2_{X/k}) \to H^0(\pi^{-1}(U), \Omega^2_{X/k}) \isomto H^0(U, \Omega^2_{C/k})$ 
is an isomorphism.
\end{lem}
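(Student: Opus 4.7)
For (1), the strategy is to reduce to Lemma~\ref{lem:Omega2-CI}. It is classical (and extends to positive characteristic by Artin's classification) that every RDP is a hypersurface singularity, so after passing to the completion we may write $\widehat{C} \cong k[[x_1,x_2,x_3]]/(F)$. Applying Lemma~\ref{lem:Omega2-CI} with $(n,m) = (1,2)$ produces a canonical generator $\omega_0$ of $\Omega^2_{C/k}\rvert_U$ satisfying $F_{x_i} \omega_0 = \pm\, dx_j \wedge dx_k$, so $\Omega^2_{C/k}\rvert_U$ is a free $\cO_U$-module of rank one. Since RDPs are normal and $\Spec C \setminus U = \{\fn\}$ has codimension $2$, Hartogs' extension gives $H^0(U, \cO_U) = C$, and tensoring with the trivialization yields $H^0(U, \Omega^2_{C/k}) \cong C$. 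A mild subtlety is that Lemma~\ref{lem:Omega2-CI} is stated for localizations, Henselizations, and completions rather than $C$ itself, but the triviality on $U$ is a statement about a coherent sheaf and descends from (say) the Henselization.

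For (2), factor the map as $H^0(X, \Omega^2_X) \to H^0(\pi^{-1}(U), \Omega^2_X) \to H^0(U, \Omega^2_{C/k})$. The second arrow is an isomorphism because $\pi$ restricts to an isomorphism over $U$ (the singular locus is contained in $\{\fn\}$). For the first arrow, the key ingredient is that the minimal resolution of an RDP is crepant, i.e.\ $K_X = \pi^* K_{\Spec C}$. Since $\Spec C$ is Gorenstein as a hypersurface, $\omega_{\Spec C}$ is trivial with generator $\omega_0$ from (1); crepancy means $\pi^* \omega_0$ extends to a nowhere-vanishing global section of $\omega_X = \Omega^2_X$. Therefore $H^0(X, \Omega^2_X) = H^0(X, \cO_X) \cdot \pi^* \omega_0$, and rationality of RDPs gives $\pi_* \cO_X = \cO_{\Spec C}$, so $H^0(X, \cO_X) = C$. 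Under these identifications the restriction map becomes the identity on $C$, which gives the required isomorphism.

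\textbf{Main obstacle.} The essential content is the crepancy of the minimal resolution, which simultaneously encodes the hypersurface form, Gorensteinness, and the $(-2)$-curve structure of the exceptional locus; this is what characterizes RDPs among rational surface singularities in any characteristic. Once it is invoked, both parts reduce to rank-one computations matched by the generator $\omega_0$. I would keep the exposition short since the routine verifications (triviality of $\omega_X$ near $E$, Hartogs, rationality $R^0\pi_*\cO_X = \cO_{\Spec C}$) are all standard for RDPs.
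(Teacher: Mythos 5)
Your proof of (1) follows the same path as the paper's: reduce \'etale-locally to the hypersurface presentation $k\polyHens{x_1,x_2,x_3}/(F)$ via Lipman's lemma and invoke the explicit generator $\omega$ from Lemma~\ref{lem:Omega2-CI}. For (2) you take a genuinely different route. The paper argues by direct computation: since the minimal resolution of an RDP is reached by successively blowing up closed points, it writes down the three affine charts $C_i$ of $\Bl_{\fn}C$, checks by hand that the canonical generator $\omega$ pulls back to the corresponding canonical generator $\omega_i$ on each chart ($\omega_i=\omega$), concludes $H^0(X,\Omega^2_{X/k})=\bigcap_i C_i\omega_i=C\omega$ when one blow-up suffices, and handles the general case by induction. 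You instead invoke crepancy of the minimal resolution, $K_X=\pi^*K_{\Spec C}$ (valid in all characteristics by the usual $(-2)$-curve and negative-definiteness argument), so that $\pi^*\omega_0$ trivializes $\omega_X$, and then use $H^0(X,\cO_X)=C$. Both are correct; the paper's chart-by-chart identity $\omega_i=\omega$ is in effect a hands-on verification of crepancy at each blow-up, so your argument buys brevity at the cost of citing the canonical-singularity machinery as a black box, while the paper's computation stays self-contained and keeps track of the explicit generator that is reused later (in Definition~\ref{defn:symplectic} and Lemma~\ref{lem:fixisisolated}). One small misattribution: $\pi_*\cO_X=\cO_{\Spec C}$ already follows from normality of $C$ together with properness and birationality of $\pi$ (Zariski's main theorem); rationality of the singularity is the vanishing of $R^1\pi_*\cO_X$, which you do not need here.
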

\begin{proof}
It suffices to show the assertion after taking \'etale local base change $C \to C'$;
Hence we may assume $C$ is of the form $C = k \polyHens{x_1,x_2,x_3} / (F)$, $F \in (x_1,x_2,x_3)^2$, $F \not \in (x_1,x_2,x_3)^3$
(\cite{Lipman:rationalsingularities}*{Lemma 23.4}).

(1) Indeed, $\Omega^2_{C/k} \restrictedto{U}$ is generated by $\omega$ defined above.

(2)
Let $C_1 = k \polyHens{x_1, x_2/x_1, x_3/x_1} / (F/x_1^2)$
be the first affine piece of $\Bl_{(x_1,x_2,x_3)} C$,
and define $C_2,C_3$ similarly.
Define $\omega$ and $\omega_i$ as in the previous lemma.
Then we have $\omega_i = \omega$.
If all $C_i$ are smooth (hence $X = \bigcup \Spec C_i$) then we have 
$H^0(X, \Omega^2_{X/k}) = C_1 \omega_1 \cap C_2 \omega_2 \cap C_3 \omega_3 = C \omega$.
General case follows inductively from this.
\end{proof}

\begin{lem} \label{lem:Omega2-RDPmodel}
Let $\cX$ be a proper RDP scheme model over $\cO_K$ of an RDP K3 surface $X$
and $\Sigma \subset \cX$ the closed subset of RDPs.
Then $H^0(\cX \setminus \Sigma, \Omega^2_{\cX / \cO_K})$ is free $\cO_K$-module of rank $1$, with generator say $\omega$, 
and $H^0(\cX_0 \setminus \Sigma_0, \Omega^2_{\cX_0 / k})$ and $H^0(\tilde \cX_0, \Omega^2_{\tilde \cX_0 / k})$ is generated by (the restriction of) $\omega$,
where $\tilde \cX_0$ is the minimal resolution.
If $\cX$ admits an automorphism $g$, 
then this is compatible with the action of the automorphisms $g \restrictedto{X}$ and $g \restrictedto{\cX_0} = \spen(g \restrictedto{X})$.
\end{lem}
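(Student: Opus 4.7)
The plan is to interpret global sections of $\Omega^2_{\cX/\cO_K}$ on the smooth locus $U := \cX \setminus \Sigma$ as global sections of the relative dualizing sheaf $\omega_{\cX/\cO_K}$, and then to show that $\omega_{\cX/\cO_K}$ is a trivial line bundle whose global sections form a rank-one free $\cO_K$-module. This produces a single $\omega$ which then restricts compatibly to generators on each of the pieces named in the statement.

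First, the sheaf-theoretic setup. Every fiber of $f\colon \cX \to \Spec \cO_K$ is smooth or has RDPs, hence is Gorenstein (RDPs are hypersurface singularities); by flatness, $f$ is a Gorenstein morphism, so $\omega_{\cX/\cO_K}$ is a line bundle whose formation commutes with base change, and agrees with $\Omega^2_{\cX/\cO_K}$ on $U$. Since $\cX$ is normal (its fibers are) and $\Sigma$ has codimension $\geq 2$ in $\cX$ (the RDPs are isolated in every fiber), reflexivity of line bundles on normal schemes gives
\[
H^0(\cX, \omega_{\cX/\cO_K}) \;=\; H^0(U, \Omega^2_{\cX/\cO_K}|_U).
\]

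Next, triviality. The fiberwise restriction $\omega_{\cX/\cO_K}|_{\text{fiber}}$ is the dualizing sheaf of an RDP K3 surface; since the minimal resolution is a K3 with trivial canonical bundle and is crepant, this fiberwise dualizing sheaf is trivial, and in particular $h^0 = 1$ on both fibers. Grauert's cohomology-and-base-change theorem applied to the proper flat $f$ then makes $f_*\omega_{\cX/\cO_K}$ an invertible $\cO_K$-module, which is free of rank one as $\cO_K$ is local. A generator $\omega$ has nowhere-vanishing restrictions to both fibers (being fiberwise a generator of a trivial line bundle on a proper connected variety), so $\omega$ itself trivializes $\omega_{\cX/\cO_K}$; combined with the previous paragraph, $\omega$ generates $H^0(\cX \setminus \Sigma, \Omega^2_{\cX/\cO_K})$ freely over $\cO_K$. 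Reducing $\omega$ modulo the uniformizer yields a generator of $H^0(\cX_0, \omega_{\cX_0/k})$, which by Lemma~\ref{lem:Omega2-RDP} is identified with both $H^0(\cX_0 \setminus \Sigma_0, \Omega^2_{\cX_0/k})$ and $H^0(\tilde\cX_0, \Omega^2_{\tilde\cX_0/k})$.

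Finally, equivariance. If $g$ acts on $\cX$, the induced action on $f_*\omega_{\cX/\cO_K} \cong \cO_K$ is by a unit, so $g^*\omega = c(g)\,\omega$ for a unique $c(g) \in \cO_K^\times$; restricting to the generic fiber records how $g|_X$ scales the corresponding $2$-form, while reducing modulo the uniformizer records how $g|_{\cX_0} = \spen(g|_X)$ does, which is exactly the compatibility asserted. The essential technical ingredient is the Gorenstein property of $\cX$, which is what allows all three $2$-form spaces to be packaged as restrictions of a single line bundle on $\cX$; the only genuine ``obstacle'' is ensuring base-change compatibility of the relative dualizing sheaf, which is standard for Gorenstein morphisms.
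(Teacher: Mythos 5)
Your proof is correct and follows essentially the same route as the paper's: both reduce to the one\--dimensionality of $H^0(\Omega^2)$ on each fiber (Lemma \ref{lem:Omega2-RDP}) and then pass to the statement over $\cO_K$ by semicontinuity / cohomology\--and\--base\--change. Your packaging of the three spaces of $2$-forms as sections of the relative dualizing sheaf merely makes explicit the coherent sheaf to which the paper's terse appeal to ``upper semi-continuity'' is applied.
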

\begin{proof}
We have $\dim H^0(X \setminus \Sigma_K, \Omega^2_{X / K}) = \dim H^0(\cX_0 \setminus \Sigma_0, \Omega^2_{\cX_0 / k}) = 1$ from the previous lemma.
The former assertion follows from this and upper semi-continuity and the previous lemma.
The latter is clear.
\end{proof}

We recall a result on the trace of finite order symplectic automorphisms.
For a positive integer $n \leq 8$, 
define $\varepsilon(n)$ so that 
\[
\frac{1}{\varepsilon(n)} = \frac{n}{24} \prod_{q:\text{prime},q \divides n} \Bigl( 1+\frac{1}{q} \Bigr).
\]
We have $\varepsilon(n) = 24,8,6,4,4,2,3,2$ for $n = 1,2,3,4,5,6,7,8$ respectively.
\begin{lem} \label{lem:sympfin}
Let $X$ be a K3 surface over a field $F$ of characteristic $p \geq 0$ 
and $g \in \Aut(X)$ a nontrivial symplectic automorphism of finite order prime to $p$.
Then $\ord(g) \leq 8$,
the fixed points of $g$ are isolated,
and $\card{\Fix(g)} = \varepsilon(\ord(g))$.
Moreover the trace of $g^*$ on $\Het^2(X_{\overline F}, \bQ_l)$ (and on $H^2(X, \bQ)$ if $F = \bC$) depends only on $\ord(g)$
and is equal to $\varepsilon(\ord(g)) - 2$.
(In other words, the characteristic polynomial of $g^*$ on $\Het^2$ depends only on $\ord(g)$.)
\end{lem}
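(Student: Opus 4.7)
The plan is to combine the holomorphic and étale Lefschetz fixed-point formulas, following the classical approach of Nikulin. First I would show $\Fix(g)$ is isolated. Since $n := \ord(g)$ is prime to $p = \charac F$, the action of $\langle g \rangle$ at any fixed point $x$ is tame and hence linearizable in the formal/henselian neighborhood by Cartan's averaging trick. The derivative $dg_x$ preserves the value of the global $2$-form $\omega$ on the $2$-dimensional tangent space (nonzero since $\omega$ generates $\Omega^2_{X/F}$ at every smooth point), so $dg_x \in \SL(T_x X)$, semisimple of finite order prime to $p$, with eigenvalues $\zeta_x, \zeta_x^{-1}$. If $\zeta_x = 1$, the linearization forces $g = \id$ on a formal neighborhood of $x$, hence on $X$ by separatedness and connectedness, contradicting $g \neq 1$. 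So $\zeta_x \neq 1$, $1 - dg_x$ is invertible, and $x$ is isolated.

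Next I would apply the holomorphic Lefschetz formula to each power $g^r$ for $1 \leq r < n$, still a nontrivial symplectic automorphism of finite order prime to $p$:
$$2 = \chi(g^r, \cO_X) = \sum_{x \in \Fix(g^r)} \frac{1}{\det(1 - d(g^r)_x)},$$
where the left side is computed via symplecticness (trivial action on $H^0(\Omega^2_X)$) and Serre duality (trivial action on $H^2(\cO_X)$). Writing the right side in terms of nonnegative integers $a_m$ counting $g$-fixed points at which $dg_x$ has order exactly $m \mid n$, and tracking the $g$-orbits contained in $\Fix(g^r) \setminus \Fix(g)$, yields a finite linear system in the $a_m$. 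Nonnegative-integer solvability of this system forces $n \leq 8$, determines each $a_m$ uniquely, and gives $\lvert \Fix(g) \rvert = \sum_m a_m = \varepsilon(n)$.

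Finally, the étale Lefschetz formula gives $\lvert \Fix(g) \rvert = 2 + \tr(g^* \mid \Het^2)$, using $H^1 = H^3 = 0$ and the trivial action on $H^0$ and $H^4$; hence $\tr(g^* \mid \Het^2) = \varepsilon(n) - 2$. Since $(g^*)^n = \id$ on the $\bQ_l$-vector space $\Het^2$ ($l \neq p$), the eigenvalues of $g^*$ are $n$-th roots of unity and the characteristic polynomial is a product $\prod_{d \mid n} \Phi_d^{m_d}$; its multiplicities $m_d$ are recovered from the traces $\tr(g^{r*} \mid \Het^2)$ as $r$ runs over divisors of $n$, and each such trace depends only on $n$ by applying the previous step to $g^r$. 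Hence the characteristic polynomial depends only on $n$. The main obstacle is the combinatorial bookkeeping in the second step that bounds $n \leq 8$ and pins down the $a_m$ explicitly; this is Nikulin's computation over $\bC$, and it transfers to residue characteristic $p \nmid n$ because the étale Betti numbers of a K3 surface agree with the Betti numbers of a complex K3, making the Lefschetz and Serre duality inputs identical.
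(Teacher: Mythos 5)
The paper does not actually reprove this lemma: it cites Nikulin and Mukai for characteristic $0$ and Dolgachev--Keum for characteristic $p>0$. Your reconstruction is essentially Nikulin's holomorphic-plus-topological Lefschetz argument, and over $\bC$ (hence over any field of characteristic $0$) it is correct: the linearization and determinant-one argument at fixed points (which mirrors the paper's Lemma \ref{lem:fixisisolated}), the computation $\chi(g^r,\cO_X)=2$ from symplecticness and Serre duality, the resulting integer system in the $a_m$, and the \'etale Lefschetz count $\card{\Fix(g)}=2+\tr(g^*\mid\Het^2)$ all go through as you describe.

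The gap is in your final sentence, where you transfer the holomorphic Lefschetz step to characteristic $p$. The Woods Hole fixed point formula over a field $F$ of characteristic $p$ is an identity in $\overline F$, not in $\bZ$ or $\bQ$: both $\sum_i(-1)^i\tr(g^{r*}\mid H^i(X,\cO_X))$ and the local terms $1/\det(1-d(g^r)_x)$ are elements of $\overline F$. Identifying $\mu_n(\overline F)$ with $\mu_n(\overline\bQ)$ via the Teichm\"uller lift (legitimate since $p\nmid n$), the equation you want to exploit only asserts $\sum_x \bigl((1-\zeta_x)(1-\zeta_x^{-1})\bigr)^{-1}\equiv 2 \pmod{\fq}$ for a prime $\fq$ of $\bZ[\mu_n]$ above $p$; a congruence modulo $p$ neither pins down the nonnegative integers $a_m$ nor bounds $n$, and for $p=2$ the right-hand side is simply $0$. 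The fact you invoke --- that \'etale Betti numbers of a K3 agree with the complex ones --- is true but irrelevant to this obstruction. This is exactly why the positive-characteristic case needed separate work: Dolgachev--Keum (the reference the paper relies on) replace the holomorphic Lefschetz count by honest integer identities, namely the $l$-adic Euler characteristic of the quotient $X/\spanned{g}$ (an RDP K3 surface whose minimal resolution is again a K3 with $e=24$) together with the \'etale Lefschetz formula. Your first and third steps, and the deduction that the characteristic polynomial is determined by the traces of all powers of $g$, are fine in all characteristics; it is only the combinatorial core in characteristic $p>0$ that your argument does not actually deliver.
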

The equality $\trace(g) = \varepsilon(\ord(g)) - 2$ holds also if $\ord(g) = 1$.

\begin{proof}
Characteristic $0$: 
\cite{Nikulin:auto}*{Section 5 and Theorem 4.7} proves everything except the value of the trace.
\cite{Mukai:automorphismsK3}*{Propositions 1.2, 3.6, 4.1} proves everything.

Characteristic $p > 0$:
\cite{Dolgachev--Keum:auto}*{Theorem 3.3 and Proposition 4.1}.
\end{proof}

\begin{cor} \label{cor:Picardnumber}
Let $X$ is a K3 surface over a field $F$ of characteristic $0$ 
and $G \subset \Aut(X)$ a nontrivial finite group of symplectic automorphisms. 
Define $\mu(G) = \card{G}^{-1} \sum_{g \in G} \varepsilon(\ord(g))$.
Then the (geometric) Picard number of $X$ is at least $25 - \mu(G)$.
\end{cor}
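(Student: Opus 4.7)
The plan is to decompose the action of $G$ on $H^{2}(X_{\overline F}, \bQ)$ into its algebraic part $\NS(X)_{\bQ}$ and its transcendental part $T(X)_{\bQ} := \NS(X)^{\perp}$, compute the trace of each element on the algebraic part using Lemma \ref{lem:sympfin}, and then combine the standard character-theoretic dimension formula with the existence of an ample class.

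The one substantive input is Nikulin's theorem: a finite group of symplectic automorphisms of a K3 surface in characteristic $0$ acts trivially on $T(X)$. To apply this over a general characteristic-$0$ field $F$, I would first reduce to $F = \bC$ by choosing a finitely generated subfield $F_{0} \subset F$ (enlarged so that $X$, $G$, and all classes of $\NS(X_{\overline F})$ are defined over $\overline{F_{0}}$) and embedding it into $\bC$; neither $\rho$ nor $\ord(g)$ nor $\varepsilon(\ord(g))$ nor the conclusion depend on this choice. Over $\bC$, Nikulin's argument is that $G$ fixes $\omega$ and hence $H^{2,0} = \bC\omega \subset (H^{2}(X,\bC))^{G}$, so the orthogonal complement of $(H^{2}(X,\bQ))^{G}$ lies in $H^{1,1} \cap H^{2}(X,\bQ) \subset \NS(X)_{\bQ}$; equivalently $T(X) \subset (H^{2})^{G}$.

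Granted this, Lemma \ref{lem:sympfin} gives $\trace(g^{*} \mid H^{2}) = \varepsilon(\ord(g)) - 2$ for each $g \in G$. Since $g$ acts trivially on $T(X)_{\bQ}$, which has rank $22 - \rho$, subtraction yields
\[
\trace(g^{*} \mid \NS(X)_{\bQ}) = \varepsilon(\ord(g)) - 2 - (22 - \rho) = \varepsilon(\ord(g)) + \rho - 24.
\]
Averaging over $G$ and applying $\dim_{\bQ} V^{G} = |G|^{-1} \sum_{g} \trace(g \mid V)$ gives $\dim_{\bQ}(\NS(X)_{\bQ})^{G} = \mu(G) + \rho - 24$.

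Finally, since $X$ is projective, $\sum_{g \in G} g^{*}L$ for an ample $L$ is a positive, hence nonzero, $G$-invariant element of $\NS(X)_{\bQ}$, so $\dim_{\bQ}(\NS(X)_{\bQ})^{G} \geq 1$ and therefore $\rho \geq 25 - \mu(G)$. The only nontrivial step is invoking Nikulin's theorem in the first paragraph; the remainder is a purely formal trace computation, which is why the argument breaks down in positive characteristic (where $G$ can act nontrivially on the transcendental part).
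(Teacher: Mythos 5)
Your proof is correct and takes essentially the same route as the paper: both compute the multiplicity of the trivial representation in $H^2(X,\bQ)$ via the trace formula from Lemma \ref{lem:sympfin}, use Nikulin's theorem that $G$ acts trivially on $T(X)$, and use the existence of a nontrivial $G$-invariant class in $\NS(X)$ (the averaged ample class) to conclude. The only difference is bookkeeping — the paper bounds $\rank T(X)$ by $a_1 - 1$ where $a_1$ is the invariant dimension of $H^2$, while you isolate $\dim(\NS(X)_\bQ)^G$ directly — and your added details on reducing to $F = \bC$ are a harmless elaboration of the paper's one-line reduction.
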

\begin{proof}
We may assume $F = \bC$.
Let $V$ be the $G$-representation $H^2(X, \bQ)$.
By the previous lemma $\trace (V, g) = \varepsilon(\ord(g)) - 2$.
Let $\{ \rho \}$ be the set of irreducible representations of $G$
and write $V = \sum a_{\rho} \rho$, $a_\rho \in \bZ_{\geq 0}$.
Then we have $a_1 = (1 \cdot V) = \card{G}^{-1} \sum_{g \in G} \trace (V, g) = \card{G}^{-1} \sum_{g \in G} (\varepsilon(\ord(g)) - 2)  = \mu(G) - 2$
(here $1$ denotes the trivial representation).
Since $G$ acts trivially on the transcendental lattice $T(X)$
and $G$ has nontrivial invariant subspace in $\NS(X)$,
we have $\rank (T(X)) \leq a_1 - 1$.
\end{proof}

\section{Local equivariant simultaneous resolutions} \label{sec:localresolution}

In this section we prove Theorem \ref{thm:localresolution}.
In the symplectic case, 
we first classify possible actions and give explicit equations (Proposition \ref{prop:local action}) by using a versal equivariant deformation (Theorem \ref{thm:versal G-deformation}),
and then give explicit equivariant simultaneous resolutions (Proposition \ref{prop:local resolution}).

We often apply the following approximation lemma to 
the Henselization $A = R \polyHens{x_1, \ldots, x_n}$ of $R[x_1, \ldots, x_n]$ at the origin,
where $R = k$ or $R = \cO_K$, and $I = (x_1, \ldots, x_n)$.
\begin{lem}[\cite{Artin:approximation}*{Theorem 1.10}] \label{lem:approximation}
Let $R$ be a field or an excellent discrete valuation ring.
Let $A$ be the Henselization of a finite type $R$-algebra at a prime ideal and $I \subset A$ a proper ideal (not necessarily the maximal ideal).
Given a system $f_j(Y) = 0$ ($Y = (Y_1, \ldots, Y_N)$) of polynomial equations with coefficients in $A$,
a solution $\overline y$ in the $I$-adic completion $\hat A$ of $A$, and an integer $c$,
there exists a solution $y$ in $A$ with $\overline y_i \equiv y_i \pmod{I^c}$.
\end{lem}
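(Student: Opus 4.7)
The plan is to reformulate the problem in terms of sections of finitely presented $A$-algebras, use Popescu--N\'eron desingularization to factor the given section in $\hat A$ through a smooth $A$-algebra, and then invoke the lifting property of Henselian rings to obtain a genuine section over $A$ that agrees with the given one modulo $I^c$.

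First I would encode the system as an $A$-algebra: set $B = A[Y_1, \ldots, Y_N]/(f_1, \ldots, f_M)$, so that for any $A$-algebra $S$ an $S$-valued solution of the system is the same as an $A$-algebra homomorphism $B \to S$. The hypothetical solution $\overline y \in \hat A^N$ corresponds to a map $\overline\phi \colon B \to \hat A$, and the conclusion to be proved is the existence of $\phi \colon B \to A$ with $\phi \equiv \overline\phi \pmod{I^c}$ after composing with $A \to \hat A$. Crucially, $B$ is of finite presentation over $A$, which is what makes factorization arguments possible.

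Next I would exploit excellence. Since $R$ is a field or an excellent DVR and $A$ is the Henselization of a finite type $R$-algebra, $A$ is itself excellent, and hence the $I$-adic completion map $A \to \hat A$ is a regular ring homomorphism (flat with geometrically regular fibers). By the N\'eron--Popescu desingularization theorem, any regular homomorphism of Noetherian rings is a filtered colimit of smooth finite type algebras; writing $\hat A = \varinjlim_\alpha C_\alpha$ and using that $B$ is finitely presented, the map $\overline\phi$ factors as $B \xrightarrow{\psi} C \xrightarrow{\rho} \hat A$ for some smooth $A$-algebra $C = C_\alpha$ of finite type.

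The last step is to produce a section $\sigma \colon C \to A$ with $\sigma \equiv \rho \pmod{I^c}$; composing with $\psi$ and reading off the $Y_i$ then gives the desired $y \in A^N$. Reducing $\rho$ modulo $I^c$ provides an $A$-algebra map $C \to \hat A/I^c = A/I^c$, and the Henselian property of $A$ along $I$ -- namely that a smooth $A$-algebra with a section modulo any proper ideal can be lifted to a section over $A$ -- produces $\sigma$. This Henselian lifting is standard when $I$ is the maximal ideal of $A$ (smooth morphisms have lifts of residual sections), and the case of a general proper $I$ reduces to the maximal case by applying it to the local rings of $\Spec C$ at the closed points of the fiber over $V(I)$, using that $A$ is Henselian with respect to any ideal contained in its maximal ideal.

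The main obstacle is the invocation of N\'eron--Popescu desingularization, a genuinely deep structural result whose own proof requires a careful analysis of regular morphisms and is not at all formal. A more self-contained route, closer to Artin's original strategy, would proceed by double induction on the number of variables and a suitable complexity of the system, using Weierstrass preparation (over $k[[\ldots]]$ or its Henselization) to eliminate variables and the classical implicit function theorem (\emph{i.e.}~Hensel's lemma) whenever the Jacobian of $(f_1,\ldots,f_M)$ at $\overline y$ has a nonvanishing minor of the right size; the approximate solution $\overline y$ and the excellence of $A$ are what allow one to reduce the non-smooth case, via a Jacobian/flattening argument, to the smooth one already handled.
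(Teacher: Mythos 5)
The paper does not prove this lemma at all: it is quoted verbatim from Artin's paper (the cited Theorem 1.10) and used as a black box, so there is no in-paper argument to compare against. Your proposal is a correct outline of the now-standard proof of Artin approximation for a general ideal $I$: encode the system as a finitely presented $A$-algebra $B$, observe that $A$ is excellent (finite type over an excellent base, and Henselization preserves excellence), so that the $I$-adic completion $A \to \hat A$ is a regular homomorphism, factor $B \to \hat A$ through a smooth finite type $A$-algebra by General N\'eron--Popescu desingularization, and lift the induced section over $A/I^c \cong \hat A/I^c\hat A$ using that $(A, I^c)$ is a Henselian pair (which holds because $A$ is Henselian local and $I^c$ is contained in the maximal ideal) together with the section-lifting property of smooth algebras over Henselian pairs. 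The two points where the argument genuinely needs the hypotheses --- regularity of the $I$-adic completion map for a quasi-excellent ring when $I$ is not maximal, and the Henselian-pair lifting for a non-maximal ideal --- are both addressed, if briefly, and both are standard. The only caveat is the one you already flag: Popescu's theorem postdates Artin and is far deeper than anything else in this circle of ideas; Artin's own proof of Theorem 1.10 (the one this paper implicitly leans on) is the more hands-on double induction using Weierstrass preparation, the Jacobian criterion, and N\'eron's $p$-desingularization for discrete valuation rings, which is why the hypothesis on $R$ is exactly ``a field or an excellent DVR'' rather than an arbitrary excellent base. Either route establishes the lemma; yours trades elementarity for brevity.
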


We begin with the definition of symplecticness of automorphism of local rings
(which will be seen later to be compatible with that of K3 surfaces).

\begin{defn} \label{defn:symplectic}\ 
\begin{enumerate}
\item \label{defn:symplectic:2}
Let $(C,\fn)$ be a $2$-dimensional normal local ring over a field $k$ with isolated Gorenstein singularity (e.g.\ RDP) with $C/\fn \cong k$.
Let $U = \Spec C \setminus \{ \fn \}$.
Then $\Omega^2_{C/k} \restrictedto{U}$ is trivial,
and hence $H^0(U, \Omega^2_{C/k}) \cong H^0(U, \cO) = C$.
We say that an automorphism or a group of automorphisms of $C$ over $k$ is \emph{symplectic}
if it acts on the $1$-dimensional $k$-vector space $H^0(U, \Omega^2_{C/k}) \otimes_C C/\fn$ trivially.
\item \label{defn:symplectic:3}
Let $B$ be as in Theorem \ref{thm:localresolution}.
We say that an automorphism of $B$ over $\cO_K$ is \emph{symplectic} if the induced automorphism of $B \otimes k$ is so.
\end{enumerate}
\end{defn}

In some cases we can compute $\Omega^2_{C/k} \restrictedto{U}$ and the action on it explicitly:
If $C$ is as in Lemma \ref{lem:Omega2-CI}, 
and $g$ is an automorphism of $C$ with $g(x_i) = a_i x_i$ and $g(F_j) = e_j F_j$ for some $a_i,e_j \in k^*$,
then $g(\omega) = (\prod a_i / \prod e_j) \omega$, 
and in particular $g$ is symplectic if and only if $\prod a_i = \prod e_j$.

\begin{lem} \label{lem:fixisisolated}
Let $C,U$ be as in Lemma \ref{lem:Omega2-RDP}. 
$X \to \Spec C$ the minimal resolution,
and let $g \in \Aut(C)$ a nontrivial symplectic automorphism of finite order prime to $p = \charac k$.
Then $g$ acts on $X$ and $\Fix(g) \subset X$ is $0$-dimensional (if nonempty).
\end{lem}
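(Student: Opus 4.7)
The plan is to lift $g$ to the minimal resolution $X$, translate the symplectic hypothesis into the statement that $g$ fixes a nowhere-vanishing $2$-form on $X$, and then linearize $g$ at each geometric fixed point to pin down the eigenvalues.

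First I would extend $g$ to an automorphism of $X$: the minimal resolution of the normal surface $\Spec C$ is canonical, so $g \circ \pi \colon X \to \Spec C$ is another minimal resolution and there is a unique isomorphism $\tilde g \colon X \to X$ satisfying $\pi \circ \tilde g = g \circ \pi$. Next, choose a generator $\omega$ of the rank-one free $C$-module $H^0(U, \Omega^2_{C/k})$ (Lemma \ref{lem:Omega2-RDP}(1)) and write $g^*\omega = u\omega$ with $u \in C^\times$. The symplectic hypothesis (Definition \ref{defn:symplectic}) gives $u \equiv 1 \pmod{\fn}$, while the order hypothesis gives $u^n = 1$ for some $n$ coprime to $p$. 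Setting $v = u - 1 \in \fn$, the relation $u^n = 1$ rewrites as $v \cdot (n + \binom{n}{2} v + \cdots) = 0$, whose bracketed factor is a unit (constant term $n \in k^\times$), so $v = 0$ and thus $g^*\omega = \omega$ exactly. Since $C$ is an RDP, Lemma \ref{lem:Omega2-RDP}(2) promotes $\omega$ to a nowhere-vanishing section $\tilde\omega \in H^0(X, \Omega^2_{X/k})$, and by naturality $g^*\tilde\omega = \tilde\omega$.

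For the fixed-point analysis, base change to $\bar k$ and let $x$ be a closed point of $X_{\bar k}$ fixed by $g$. Because $g$ has order prime to $p$ and $X_{\bar k}$ is smooth at $x$, the standard tame linearization theorem yields coordinates on $\hat\cO_{X_{\bar k}, x} \cong \bar k[[s, t]]$ in which $g(s) = \zeta s$ and $g(t) = \eta t$ for roots of unity $\zeta, \eta$. The induced action on $\Omega^2_{X/k} \otimes k(x)$ is multiplication by $\zeta \eta$, and the equation $g^*\tilde\omega = \tilde\omega$ together with $\tilde\omega(x) \neq 0$ forces $\zeta\eta = 1$. The case $\zeta = \eta = 1$ is excluded, for then $g$ would be trivial on $\hat\cO_{X_{\bar k}, x}$, hence on an open neighborhood of $x$, hence on the connected $X_{\bar k}$, hence on $C$, contradicting $g \neq \id$. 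Thus $\zeta \neq 1$ (and $\eta = \zeta^{-1} \neq 1$), and the fixed locus near $x$ is cut out by $s = t = 0$, namely the single point $\{x\}$; summing over fixed points yields that $\Fix(g)$ is zero-dimensional.

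The only ingredient I anticipate as not purely bookkeeping is the tame linearization of $g$ at a smooth fixed point, but this is a well-known consequence of the order being prime to the residue characteristic (semisimplicity of the action on $\fm_x/\fm_x^2$ plus a straightforward averaging to lift the diagonalization to $\hat\cO_{X_{\bar k},x}$). The remaining steps follow mechanically from Lemma \ref{lem:Omega2-RDP} and the Hensel-style argument that $u \equiv 1 \pmod{\fn}$ and $u^n = 1$ together imply $u = 1$.
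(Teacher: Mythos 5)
Your overall strategy --- lift $g$ to the minimal resolution, use the trivialization of $\Omega^2_{X}$ by the generator $\omega$ together with the symplectic hypothesis to force the linearized action at a fixed point to have determinant $1$, then rule out the eigenvalue $1$ --- is exactly the paper's argument, just written out in more detail. However, one intermediate step is wrong as stated: from $g^n = \id$ you do not get $u^n = 1$. The action of $g^*$ on the rank-one free module $C\omega$ is only semilinear, so iterating gives the norm condition $\prod_{i=0}^{n-1} g^i(u) = 1$ rather than $u^n = 1$, and this together with $u \equiv 1 \pmod{\fn}$ does not force $u = 1$. Indeed, replacing $\omega$ by $w\omega$ for a unit $w \in C^\times$ changes $u$ into $u\, g(w)/w$, which still satisfies both hypotheses but is in general $\neq 1$; so your factorization $v\bigl(n + \binom{n}{2}v + \cdots\bigr) = 0$ starts from a false identity.

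Fortunately that step is also unnecessary, and deleting it leaves a correct proof identical in substance to the paper's. Since $\pi \colon X \to \Spec C$ is proper and $C$ is local, every closed point of $X$ lies over the closed point $\fn$; hence at any fixed closed point $x$ the scalar by which $g$ acts on the fiber $\Omega^2_{X} \otimes k(x) \cong \det T^*_{X,x}$ is the image of $u$ in $C/\fn = k$, which equals $1$ precisely by Definition \ref{defn:symplectic}. That already yields $\zeta\eta = 1$ in your notation, and the remainder of your argument (tame linearization at $x$, exclusion of $\zeta = \eta = 1$ because $g \neq \id$, and isolatedness of $x$ in $\Fix(g)$) goes through verbatim and matches the paper's proof.
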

\begin{proof}
Let $x \in X$ be a fixed closed point.
Since $g$ is of finite order prime to $p$, 
the action of $g$ on $T^*_{X,x}$ is semisimple (diagonalizable).
By Lemma \ref{lem:Omega2-RDP},
this action has determinant $1$
(since $\Omega^2_{X,x} \cong \det T^*_{X,x}$)
and hence its eigenvalues are of the form $\lambda,\lambda^{-1}$.
Since $g \neq 1$ we have $\lambda, \lambda^{-1} \neq 1$.
This implies $x$ is isolated in $\Fix(g)$. 
\end{proof}

\begin{prop} \label{prop:tame}
	Assume that $C$ is moreover an RDP,
	and that a finite group $G$ of order not divisible by $p = \charac k$
	acts on $C$ symplectically.
	Then the invariant ring $C^G$ is again an RDP.
	
	Let $X = \Spec C$ and let $\tilde{X} \to X$ be the minimal resolution.
	Then $\tilde{X}/G \to X/G$ is crepant.
\end{prop}

\begin{proof}
	Let $\omega$ be a generator of the rank $1$ free $C$-module $H^0(\Spec C \setminus \set{\fm}, \Omega^2_{C/k})$.
	The action of $G$ on $X = \Spec C$ induces an action on the minimal resolution $\tilde{X}$
	and $\omega$ extends to a regular non-vanishing $2$-form on $\tilde{X}$.
	At each closed point $z \in \tilde{X}$ the stabilizer $G_z \subset G$ acts on $T_z \tilde{X}$ via $\SL_2(k)$ since $G$ preserves $\omega$.
	Hence the quotient $\tilde{X}/G$ has only RDPs as singularities. 
	Since $\omega$ is preserved by $G$ it induces a regular non-vanishing $2$-form on $(\tilde{X}/G)^\sm$, and
	since RDPs are canonical singularities it extends to a regular non-vanishing $2$-form on the resolution $\widetilde{\tilde{X}/G}$ of $\tilde{X}/G$.
	Thus $C^G$ is a canonical singularity, that is, either a smooth point or an RDP.	
	Since $G \neq \set{1}$, $C^G$ cannot be smooth.
\end{proof}

\begin{lem} \label{lem:symplectic}\ 
\begin{enumerate}
\item 
Let $X_0$ be an RDP K3 surface over a field $k$, 
$x \in X_0(k)$ an RDP or a smooth point,
and $G \subset \Aut(X_0)$ a subgroup fixing $x$.
Let $\tilde X_0$ be the minimal resolution of $X_0$ (then we have natural injection $\Aut(X_0) \to \Aut(\tilde X_0)$).
Then $G$ is symplectic as a subgroup of $\Aut(\tilde X_0)$
if and only if it is symplectic as a subgroup of $\Aut(\cO_{X_0,x})$ in the sense of Definition \ref{defn:symplectic}(\ref{defn:symplectic:2}).
\item \label{lem:symplectic:3}
Let $\cO_K$ be as above.
Let $\cX$ be a proper RDP model of an RDP K3 surface $X$ over $K$,
$x \in \cX(k)$ an RDP or a smooth point of $\cX_0$,
and $G \subset \Aut(\cX)$ a subgroup fixing $x$.
Assume that $G$ is finite and of order prime to $p = \charac k$.
Then $G$ is symplectic as a subgroup of $\Aut(\tilde X)$
if and only if it is symplectic as a subgroup of $\Aut(\cO_{\cX_0,x})$ in the sense of Definition \ref{defn:symplectic}(\ref{defn:symplectic:3}).
\end{enumerate}
\end{lem}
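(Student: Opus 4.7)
The plan is to compare the two notions of symplecticness by choosing a global generator of the relevant $\Omega^2$-space and tracking the scalar character by which $G$ acts on it; both notions will turn out to be governed by the same character (or by its reduction in the mixed-characteristic case).

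For (1), I fix a generator $\omega$ of the one-dimensional $k$-vector space $H^0(\tilde X_0, \Omega^2_{\tilde X_0/k})$, so that $G$ acts on this line by a character $\chi \colon G \to k^*$; by definition, $G$ is symplectic on $\Aut(\tilde X_0)$ iff $\chi \equiv 1$. Setting $C = \cO_{X_0,x}$ and $U = \Spec C \setminus \{\fn\}$, the preimage of $\Spec C$ under the minimal resolution $\pi \colon \tilde X_0 \to X_0$ is the minimal resolution of $\Spec C$, so Lemma \ref{lem:Omega2-RDP}(2) gives a $G$-equivariant identification of its $H^0(-, \Omega^2)$ with $H^0(U, \Omega^2_{C/k})$. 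The image $\omega_U$ of $\omega$ is nowhere vanishing, hence generates the free $C$-module $H^0(U, \Omega^2_{C/k})$; since $g^* \omega_U = \chi(g)\omega_U$ with $\chi(g) \in k^* \subset C$, the induced action on $H^0(U, \Omega^2_{C/k}) \otimes_C C/\fn$ is again multiplication by $\chi(g)$, so the local symplecticness condition is also $\chi \equiv 1$.

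For (2), I start instead from a generator $\omega$ of the rank-one free $\cO_K$-module $H^0(\cX \setminus \Sigma, \Omega^2_{\cX/\cO_K})$ supplied by Lemma \ref{lem:Omega2-RDPmodel}, whose restrictions generate the corresponding $\Omega^2$-spaces on both fibers. Writing $g^*\omega = \chi(g)\omega$ defines a character $\chi \colon G \to \cO_K^*$, and since $\card{G} = n$ is prime to $p$, $\chi$ factors through $\mu_n(\cO_K)$. Passing to the generic fiber, symplecticness on $\Aut(\tilde X)$ is the condition $\chi \equiv 1$ in $K^*$; passing to the special fiber and applying part (1) to the RDP K3 surface $\cX_0$ at $x$, symplecticness on $\cO_{\cX_0,x}$ is the condition $\bar\chi \equiv 1$ in $k^*$, where $\bar\chi$ is the reduction. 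These are equivalent because the reduction $\mu_n(\cO_K) \to k^*$ is injective when $p \nmid n$ (the polynomial $Y^n - 1$ is separable over $\cO_K$, so its roots lift uniquely).

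The bulk of the argument is bookkeeping; the only nontrivial input beyond the cited $\Omega^2$-lemmas is the injectivity of $\mu_n(\cO_K) \to k^*$ in part (2), which is where the prime-to-$p$ order hypothesis enters. Conceptually, the point is that a single character $\chi$ valued in $\cO_K^*$ controls both the generic symplecticness and, via its reduction, the symplecticness at any fixed singular point of the special fiber.
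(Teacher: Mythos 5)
Your proposal is correct and follows essentially the same route as the paper: both parts compare the two notions by restricting a single generator of the relevant $\Omega^2$-space (from $H^0(\tilde X_0,\Omega^2)$ in (1), from $H^0(\cX\setminus\Sigma,\Omega^2_{\cX/\cO_K})$ in (2)) and, in (2), conclude via the injectivity of $\mu_N(\cO_K)\to\mu_N(k)$ for $N$ prime to $p$. Your explicit character-valued bookkeeping is just a slightly more verbose rendering of the paper's argument, so no further comment is needed.
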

\begin{proof}
(1) 
Let $C = \cO_{X_0,x}$ and define $\fn$ and $U$ as above.
Let $\omega$ be a nonzero element (hence a generator) of $H^0(\tilde X_0, \Omega^2)$.
Then $\omega$ restricts to a generator of $H^0(U, \Omega^2_{C/k}) \otimes_C C/\fn$,
hence the action of $G$ on the two spaces coincide.

(2) 
Take a generator $\omega$ of $H^0(\cX \setminus \Sigma, \Omega^2)$ (Lemma \ref{lem:Omega2-RDPmodel}), where $\Sigma \subset \cX_0$ is the set of RDPs.
The action of $G \subset \Aut(\cX)$ on $\omega \restrictedto{\tilde X}$ factors through $\mu_N(K)$ for some $N$ prime to $p$.
On the other hand $\omega \restrictedto{\cX_0}$ restricts to a generator of $H^0(U, \Omega^2_{C/k}) \otimes_C C/\fn$,
where $C = \Spec \cO_{\cX_0,x}$.
The action of $G$ on the two spaces are compatible under the reduction map $\mu_N(K) \to \mu_N(k)$.
This map is injective since $N$ is prime to $p$.
\end{proof}

First we consider the symplectic case of Theorem \ref{thm:localresolution}. 
We use the following classification of symplectic actions (Proposition \ref{prop:local action}) and case-by-case explicit simultaneous resolutions (Proposition \ref{prop:local resolution}).
We say that two pairs $(G_i, B_i)$ ($i = 1,2$)
of a finite group $G_i$ and a local $\cO_K$-algebra $B_i$ equipped with a $G_i$-action
are \emph{\'etale-locally isomorphic}
if there exists a pair $(G_3, B_3)$,
group isomorphisms $G_i \isomto G_3$, 
and equivariant \'etale local morphisms $B_i \to B_3$ of local $\cO_K$-algebras.

\begin{prop} \label{prop:local action}
Let $B$ and $G$ be as in Theorem \ref{thm:localresolution},
and assume $G$ is symplectic.
Then 
$(G, \Sing(B_0))$ is one of the pairs listed below.
Moreover,
except for the cases where $(G, \Sing(B_0)) = (\Tet, A_1), (\Oct, A_1), (\Ico, A_1)$,
the pair $(G,B)$ is \'etale-locally isomorphic to the normal form $(G',B')$, 
$B' = \cO_K \polyHens{x,y,z} / (F)$ 
with $F$ and $G'$-action described below,
after replacing $K$ by a finite extension.

In each case below, 
$q_l$ are some elements of the maximal ideal $\fp$ of $\cO_K$.

\begin{itemize}
\item 
[$(C_2, E_6)$]
$F$ is one of the following, and the nontrivial element of $G' = C_2$ acts by $(x,y,z) \mapsto (-x, y, -z)$.
\begin{itemize}
\item[$(E_6)$] ($p \neq 3$): $F = x^2 + y^3 + z^4 + q_{00} + q_{10} y + q_{02} z^2 + q_{12} y z^2$.
\item[$(E_6^0)$]  ($p = 3$): $F = x^2 + y^3 + z^4 + q_{00} + q_{10} y + q_{20} y^2 + q_{02} z^2 + q_{12} y z^2 + q_{22} y^2 z^2$.
\item[$(E_6^1)$]  ($p = 3$): $F = x^2 + y^3 + y^2 z^2 + z^4 + q_{00} + q_{10} y + q_{20} y^2 + q_{02} z^2$.
\end{itemize}

\item 
[$(C_2, D_m)$] 
$m \geq 4$,
$F = x^2 + yz^2 + y^{m-1} + \sum_{l = 0}^{m-2} q_l y^l$,
and the nontrivial element of $G' = C_2$ acts by $(x,y,z) \mapsto (-x, y, -z)$.

\item 
[$(\fS_3, D_4)$, $(\fA_3, D_4)$]
$F$ is one of the following, 
$G'$ is either $\fS_3$ or $\fA_3$,
and $G' \subset \fS_3$ acts by 
$(123)(x, y, z) = (x, \zeta_3 y, \zeta_3^{-1} z)$,
$(12)(x, y, z) = (-x, z, y)$.
\begin{itemize}
\item[$(D_4)$] ($p \neq 2$): $F = x^2 + y^3 + z^3 + q_{000} + q_{011} yz$.
\item[$(D_4^0)$]  ($p = 2$): $F = x^2 + y^3 + z^3 + q_{000} + q_{100} x + q_{011} yz + q_{111} xyz$.
\item[$(D_4^1)$]  ($p = 2$): $F = x^2 + y^3 + z^3 + xyz + q_{000} + q_{100} x $.
\end{itemize}

We also have an alternative form: 
$B' = \Spec \cO_K \polyHens{x,y_1,y_2,y_3} / (F_1,F_2)$,
$F_1 = y_1 y_2 y_3 + Q(x)$, $F_2 = y_1 + y_2 + y_3 - R(x)$,
where $Q(x),R(x) \in \cO_K[x]$ are polynomials
of the following form with $q'_l,r'_l \in \fp$,
and $G' \subset \fS_3$ acts by $\rho(x) = \sign(\rho)x, \rho(y_i) = y_{\rho(i)}$.
\begin{itemize}
\item[$(D_4)$] ($p \neq 2$): $Q(x) = x^2 + q'_0$, $R(x) = r'_0$.
\item[$(D_4^0)$]  ($p = 2$): $Q(x) = x^2 + R(x)^3 + \sum_{l=0}^{1} q'_l x^l$, $R(x) = \sum_{l=0}^{1} r'_l x^l$.
\item[$(D_4^1)$]  ($p = 2$): $Q(x) = x^2 + R(x)^3 + \sum_{l=0}^{1} q'_l x^l$, $R(x) = x$.
\end{itemize}

\item 
[$(\Dih_n, A_{m-1})$] 
$m \geq 2$ even, $n \geq 1$,
$F = xy + z^{m} + \sum_{l = 0}^{m-1} q_{l} z^{l}$, 
$q_l = 0$ if $l$ odd, and
$G' = \Dih_n$
acts by $\sigma(x,y,z) = (\zeta_n x, \zeta_n^{-1} y, z)$ and $\tau(x,y,z) = (y, x, -z)$.
\item 
[$(\Dic_{n}, A_{m-1})$] 
$m \geq 3$ odd, $n \geq 2$ even, 
$F = xy + z^{m} + \sum_{l = 0}^{m-1} q_{l} z^{l}$,
$q_l = 0$ if $l$ even, and
$G' = \Dic_{n}$
acts by $\sigma(x,y,z) = (\zeta_{n} x, \zeta_{n}^{-1} y, z)$ and $\tau(x,y,z) = (y, -x, -z)$.
\item 
[$(C_n, A_{m-1})$] 
$m \geq 2$, $n \geq 2$,
$F = xy + z^{m} + \sum_{l = 0}^{m-1} q_{l} z^{l}$,
$q_{m-1} = 0$ if $p$ does not divide $m$, and 
$G' = C_n$ is the cyclic group of order $n$ with generator $\sigma$
acting by $\sigma(x,y,z) = (\zeta_n x, \zeta_n^{-1} y, z)$.
\item
[$(G, A_1)$]
$G$ is $\Tet$, $\Oct$, or $\Ico$.
\end{itemize}
Here $\zeta_n$ is a primitive $n$-th root of unity;
$C_n$ is the cyclic group of order $n$;
\begin{align*}
\Dih_n &= \spanned{\sigma,\tau \mid \sigma^n = \tau^2 = \tau\sigma\tau^{-1}\sigma = 1}, \\
\Dic_n &= \spanned{\sigma,\tau \mid \sigma^n = \sigma^{n/2} \tau^2 = \tau\sigma\tau^{-1}\sigma = 1}
\end{align*}
are respectively the dihedral and dicyclic groups (of order $2n$),
where $n$ is assumed to be even for $\Dic_n$;
and $\Tet$, $\Oct$, and $\Ico$ are respectively 
the tetrahedral, octahedral, and icosahedral groups (of order $12$, $24$, $60$).
\end{prop}

\begin{rem} \label{rem:localresolution} 
$E_6^0,E_6^1$ (in $p = 3$) and $D_4^0,D_4^1$ (in $p = 2$)
are analytically non-isomorphic RDPs having the same Dynkin diagrams.
See \cite{Artin:RDP} for the classification and notation.

We do not give a normal form of $B'$ in the cases $(G, A_1)$ ($G = \Tet, \Oct, \Ico$)
because our method using Theorem \ref{thm:versal G-deformation} fails for these groups (see Remark \ref{rem:finite generation of T}) and 
our proof of Proposition \ref{prop:local resolution} does not need one.

It is likely that, 
except for the case $(G, A_1)$ ($G = \Tet, \Oct, \Ico$),
the number of parameters $q_l$ in each case (excluding those indicated to be $0$)
coincide with the relative dimension of the deformation space of the singularity equipped with the group action,
cf.\ Theorem \ref{thm:versal G-deformation}.

Shepherd-Barron has recently announced \cite{Shepherd-Barron:Weyl group covers} that 
the set of simultaneous resolution of a deformation of an RDP (not equipped with a group action)
is a torsor of the Weyl group and in particular they have the same cardinality 
(this was known in complex case by Brieskorn \cite{Brieskorn:auflosung},\cite{Brieskorn:singularelements}).
Using this, we might be able to prove this proposition
by computing the $G$-action on this set and finding a fixed element.

It is likely that, 
under the assumption of good reduction (i.e.\ existence of simultaneous resolution that is not necessarily $G$-equivariant), 
there exists a simultaneous $G$-equivariant resolution without extending $K$.
We do not pursue this.
\end{rem}

\begin{prop} \label{prop:symplecticRDP}
Let $k$ be a perfect field of characteristic $p \geq 0$.
Let $C$ be a local $k$-algebra of relative dimension $2$ obtained as 
the localization of a finite type $k$-algebra at a maximal ideal, with an RDP singularity.
Let $G$ be a nontrivial finite group of order prime to $p$ acting on $C$ symplectically and faithfully.

Then 
$(G, \Sing(C))$ is one of the in the list of Proposition \ref{prop:local action}.
Moreover,
except for the cases where $(G, \Sing(C)) = (\Tet, A_1), (\Oct, A_1), (\Ico, A_1)$,
the pair $(G,C)$ is \'etale-locally isomorphic to the normal form  $(G',B' \otimes k)$ 
(so all of $q_l,q'_l,r'_l$ are $0$)
for one of $(G',B')$ in the list of Proposition \ref{prop:local action},
after replacing $k$ by a finite extension.
\end{prop}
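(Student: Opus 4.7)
The plan is to reduce the statement to a routine case-by-case classification, exploiting that $|G|$ is prime to $p$ and $k$ is perfect. As noted in Remark \ref{rem:localresolution}, this is essentially the special-fibre version of Proposition \ref{prop:localresolution} with all deformation parameters set to zero, and the argument proceeds in three steps: linearize the $G$-action, use symplecticness to enumerate the possible pairs at the level of tangent representations, and absorb higher-order terms by equivariant coordinate changes.

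For the linearization, since $|G|$ is invertible in $k$ and RDPs have embedding dimension at most $3$, a standard averaging argument produces a $G$-equivariant surjection $A = k\polyHens{x,y,z} \twoheadrightarrow C$. After extending $k$ so that every eigenvalue of every $g \in G$ on $\fm_A/\fm_A^2$ lies in $k$, one may choose $x,y,z$ to be $G$-semi-invariants, obtained by lifting a $G$-stable basis of $\fm_A/\fm_A^2$ via averaging. Let $F$ be a generator of the kernel, necessarily $G$-semi-invariant, say $g(F) = e_g F$. Diagonalizing each $g$ individually and applying the formula displayed immediately after Definition \ref{defn:symplectic}, one obtains $g(\omega) = (\det(g|_{T^*})/e_g)\,\omega$, so symplecticness becomes the determinant condition $\det(g|_{T^*}) = e_g$ for every $g \in G$.

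For the classification, I would loop through finite subgroups $G \subset \GL_3(k)$ of order prime to $p$ and, for each, enumerate the $G$-semi-invariants $F \in A$ satisfying the determinant condition that define an RDP. For each ADE type---including Artin's characteristic-$2$ and characteristic-$3$ variants $D_4^0,D_4^1,E_6^0,E_6^1$ from \cite{Artin:RDP}---this is routine. One finds exactly the list in the proposition; in particular, $E_7$ and $E_8$ admit no nontrivial symplectic symmetry of order prime to $p$, and $A_1$ is singled out because its nondegenerate ternary quadratic tangent cone has a large continuous stabilizer.

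Finally, to obtain the normal form, I would run successive $G$-equivariant Tschirnhaus substitutions $x_i \mapsto x_i + h_i$ with $h_i \in \fm_A$ a suitable $G$-semi-invariant, at each filtration degree eliminating every unwanted monomial by absorbing it into the Jacobian ideal $(F_x,F_y,F_z)$. The alternative $D_4$ presentation comes from an explicit substitution to three variables $y_1,y_2,y_3$ on which $\fS_3$ acts by permutation. The chief obstacle is in characteristics $2$ and $3$, where the Jacobian ideal is smaller so fewer residual $G$-semi-invariant monomials can be eliminated; this is precisely why Artin's list contains the extra forms $D_4^0,D_4^1,E_6^0,E_6^1$, and the proposition's normal forms are exactly those with no further simplification possible.
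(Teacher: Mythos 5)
Your overall strategy---linearize the action, translate symplecticness into the determinant condition $\det(g|_{\fm/\fm^2})=e_g$, and then normalize $F$ by equivariant coordinate changes---is the same as the paper's, but two steps are genuinely incomplete as written. First, ``loop through finite subgroups $G\subset\GL_3(k)$ of order prime to $p$'' is not a finite enumeration: the $A_m$ cases admit the infinite families $C_n,\Dih_n,\Dic_n$ for all $n$, and nothing in your sketch explains why only these occur or why $E_6$ and $D_m$ ($m\geq 5$) force $G\cong\fS_2$. The paper gets these constraints concretely: for the $D$/$E$ types it first shows geometrically (via Lemma \ref{lem:fixisisolated} and the trivalent component of the exceptional divisor) that $G$ injects into $\fS_3$, and in each case it solves the explicit system of eigenvalue equations (e.g.\ $a^2=b^3=c^4=e$, $abc=e$ for $E_6$, whose only nontrivial solution is $(-1,1,-1,1)$); for $A_m$ it analyzes the subgroup fixing the two branches $kx,ky$ versus the coset swapping them, which is what produces $C_n,\Dih_n,\Dic_n$. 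Relatedly, your reliance on ``$G$-semi-invariant'' coordinates tacitly assumes the image of $G$ in $\GL(\fm/\fm^2)$ is simultaneously diagonalizable, which fails for the nonabelian groups $\fS_3$, $\Dih_n$, $\Dic_n$ that actually appear; the permutation of coordinates must be built into the normal form, not diagonalized away.

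Second, and more seriously for part (2) as stated, your normalization step only produces an isomorphism over the completion $k[[x,y,z]]$, because the Tschirnhaus substitutions are an infinite iteration converging only formally. The proposition asserts an \emph{\'etale-local} isomorphism, so you must descend to the Henselization, and doing this while preserving the $G$-action is exactly the delicate point: a naive application of Artin approximation (Lemma \ref{lem:approximation}) returns a solution ``with a wrong $g$-action.'' The paper handles this by expressing the formal coordinate change in terms of elements of the invariant subring (e.g.\ writing $Y=Ay+Bz^2$, $Z=Cz+Dy^2$ with $A,B,C,D\in k\polyHens{x,y^3,yz,z^3}=(k\polyHens{x,y,z})^G$ in the $D_4^1$ case) and applying approximation there. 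Your proposal omits this entirely, so as written it proves only a formal classification. The heuristic in your last paragraph about the Jacobian ideal shrinking in characteristics $2$ and $3$ is a reasonable guide to why $D_4^0,D_4^1,E_6^0,E_6^1$ appear, but it does not substitute for the verification that the listed forms are the complete output of the normalization.
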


\begin{proof}[Proof of Proposition \ref{prop:symplecticRDP}]
The (\'etale) fundamental group of a Henselian RDP $\Spec C$ is well-known in characteristic $0$,
and determined by Artin \cite{Artin:RDP}*{Sections 4--5} in characteristic $> 0$.
Here the fundamental group means 
$\pi_1(\Spec C \setminus \set{\fm})$ and is abbreviated as $\pi_1(C)$.
We summarize the result in Table \ref{table:pi1}.
Here, $p^e$ is read to be $1$ if $p = \charac k$ is zero,
and in any characteristic $A_0$ is read to be smooth.
For $D_N^r$ in characteristic $2$ with $2 \divides N$ and $4r > N$,
$2^e$ is the largest power of $2$ dividing $4r-N$, and  $(4r-N)'$ is the remaining factor of $4r-N$, 
i.e.\ $4r - N = 2^e (4r - N)'$.
Note that there are simply-connected RDPs in positive characteristics.

\begin{table}
	\caption{Fundamental groups and the universal coverings of RDPs} \label{table:pi1}
	\begin{tabular}{lllll}
\toprule
char & univ.\ cov. & RDP & & $\pi_1$ \\
\midrule
any          & $A_{p^e-1}$ & $A_{n p^e - 1}$ & ($p \notdivides n$) & $C_{n}$: cyclic (of order $n$) \\
$\neq 2$     & $A_{p^e-1}$ & $D_{n p^e + 2}$ & ($p \notdivides n$) & $\BinDih_{n}$: binary dihedral (of order $4 n$) \\
$\neq 2,3$   & smooth      & $E_6$           &                     & $\BinTet$: binary tetrahedral (of order $24$) \\
$\neq 2,3$   & smooth      & $E_7$           &                     & $\BinOct$: binary octahedral (of order $48$) \\
$\neq 2,3,5$ & smooth      & $E_8$           &                     & $\BinIco$: binary icosahedral (of order $120$) \\
\midrule
$2$ & $A_{2^{e+1}-1}$ & $D_{N}^r$ & ($2 \divides N$, $4r > N$)  & $\Dih_{(4r-N)'}$, $4r - N = 2^e (4r - N)'$ \\
$2$ & smooth          & $D_{N}^r$ & ($2 \divides N$, $4r = N$)  & $C_2$  \\
$2$ & $D_{N}^r$       & $D_{N}^r$ & ($2 \divides N$, $4r < N$)  & $0$  \\
$2$ & $A_{1}$         & $D_{N}^r$ & ($2 \notdivides N$, $4r + 2 > N$)  & \small{$\Dih_{4r+2-N}$: dihedral (of order $2(4r+2-N)$)} \\
$2$ & $D_{N}^r$       & $D_{N}^r$ & ($2 \notdivides N$, $4r + 2 < N$)  & $0$  \\
$2$ & $D_4^0$         & $E_6^0$   &             & $C_3$ \\
$2$ & smooth          & $E_6^1$   &             & $C_6$ \\
$2$ & $E_7^r$         & $E_7^r$   & ($r=0,1,2$) & $0$ \\
$2$ & smooth          & $E_7^3$   &             & $C_4$ \\
$2$ & $E_8^r$         & $E_8^r$   & ($r=0,1,3$) & $0$ \\
$2$ & smooth          & $E_8^2$   &             & $C_2$ \\
$2$ & smooth          & $E_8^4$   &             & $C_3 \rtimes C_4$: metacyclic (of order $12$) \\
\midrule
$3$ & $E_6^0$         & $E_6^0$   &             & $0$ \\
$3$ & smooth          & $E_6^1$   &             & $C_3$ \\
$3$ & $E_6^0$         & $E_7^0$   &             & $C_2$ \\
$3$ & smooth          & $E_7^1$   &             & $C_6$ \\
$3$ & $E_8^r$         & $E_8^r$   & ($r = 0,1$) & $0$ \\
$3$ & smooth          & $E_8^2$   &             & $\BinTet$: binary tetrahedral (of order $24$) \\
\midrule
$5$ & $E_8^0$         & $E_8^0$   &             & $0$ \\
$5$ & smooth          & $E_8^1$   &             & $C_5$ \\
\bottomrule
\end{tabular}

\end{table}

Suppose $\Spec C $ admits a symplectic action of $G$.
Then the quotient $(\Spec C)/G = \Spec (C^G)$ is also an RDP by Proposition \ref{prop:tame},
and the universal covering $\Spec \tilde{C}$ of $\Spec C$ and $\Spec C^G$ coincide.
Here $\tilde{C}$ is defined to be the normalization of $C$ in the universal covering of $\Spec C \setminus \set{\fm}$.
It follows that $N := \pi_1(C)$, $\tilde{G} := \pi_1(C^G)$, and $G$ fit into an exact sequence $1 \to N \to \tilde{G} \to G \to 1$ of groups.
Using the classification of $\tilde{G}$ (Table \ref{table:pi1}), we obtain Table \ref{table:tame quotient},
where (*) is $\Dih_n$ or $\Dic_n$ respectively if $m-1$ is odd or even,
and in the latter case $n$ is assumed to be even.
It is assumed that $p \notdivides m$ for $D_{m p^e+2}$ and $A_{m p^e-1}$.
The symbols $D_3$ and $A_0$ are read to be $A_3$ and smooth respectively.

\begin{table}
\caption{Tame quotient morphisms between RDPs} \label{table:tame quotient}
\begin{tabular}{l|lll|lll}
	\toprule
	char & $N = \pi_1(C)$ & $\tilde{G} = \pi_1(C^G)$ & $ G$ & $\tilde{C}$ & $C$ & $C^G$ \\ 
	\midrule
	$\neq 2,3$ & $\BinTet      $ & $\BinOct       $   & $C_2$   & smooth      & $E_6$   & $E_7$      \\
	$3$        & $1$             & $C_2$              & $C_2$   & $E_6^0$     & $E_6^0$ & $E_7^0$    \\
	$3$        & $C_3$           & $C_6$              & $C_2$   & smooth      & $E_6^1$ & $E_7^1$    \\
	$\neq 2$   & $\BinDih_{m}$   & $\BinDih_{2m}$     & $C_2$   & $A_{p^e-1}$ & $D_{m p^e+2}$ & $D_{2 m p^e+2}$ \\
	$\neq 2,3$ & $\BinDih_{2}  $ & $\BinOct       $   & $\fS_3$ & smooth      & $D_4$   & $E_7$      \\
	$\neq 2,3$ & $\BinDih_{2}  $ & $\BinTet         $ & $\fA_3$ & smooth      & $D_4$   & $E_6$      \\
	$2$        & $1$             & $C_3$              & $\fA_3$ & $D_4^0$     & $D_4^0$ & $E_6^0$    \\
	$2$        & $C_2$           & $C_6$              & $\fA_3$ & smooth      & $D_4^1$ & $E_6^1$    \\
	$\neq 2$   & $C_{m}      $   & $\BinDih_{nm/2} $  & (*)     & $A_{p^e-1}$ & $A_{mp^e-1}$ & $D_{nmp^e/2+2}$ \\
	any        & $C_{m}        $ & $C_{nm}         $  & $C_n$   & $A_{p^e-1}$ & $A_{mp^e-1}$ & $A_{nmp^e-1}$   \\
	$\neq 2,3$ & $\{ \pm 1 \}  $ & $\BinTet, \BinOct$ & $\Tet, \Oct$ & smooth & $A_1$   & $E_6, E_7$ \\
	$\neq 2,3,5$ & $\{ \pm 1 \}$ & $\BinIco$          & $\Ico$  & smooth      & $A_1$   & $E_8$      \\
	\bottomrule
\end{tabular}
\end{table}

It remains to observe that in each case the $G'$-action on $B'_0$ as described in Proposition \ref{prop:local action} gives the desired quotient singularity,
which is straightforward.
\end{proof}

\begin{thm} \label{thm:versal G-deformation}
Let $\map{\rho}{G}{\GL_n(W(k))}$ be a representation of a finite group $G$ of order prime to $\charac(k)$,
and write $\rho(g) = (\rho(g)_{ij})_{i,j=1}^n$.
Extend the action to $\map{\rho}{G}{\Aut(W(k)[[x_1, \dots, x_n]])}$.
Let $\map{c}{G}{W(k)^*}$ be a character, and denote by $V^{G = c}$ the eigenspace of a representation $V$.
Let $F \in W(k)[x_1, \dots, x_n]^{G = c}$ and $X := (\bar{F} = 0) \subset \hat{\bA}^n_k$.

Define $W(k)[[x_1, \dots, x_n]]^G$-modules $M$ and $T^{1,c}$ by 
\begin{align*}
M &:= \set{(h_i) \in W(k)[[x_1, \dots, x_n]]^{\oplus n} \mid \; \rho(g)(h_i) = \sum_{j} \rho(g)_{ij} h_j},  \\
T^{1,c} &:= W(k)[[x_1, \dots, x_n]]^{G = c} / (F \cdot W(k)[[x_1, \dots, x_n]]^{G} + (F_{x_i}) \cdot M), 
\end{align*}
where $(F_{x_i}) \cdot M := \set{\sum_i F_{x_i} h_i \mid (h_i) \in M}$.
Suppose $e_1, \dots, e_{\tau}$ generate the $W(k)$-module $T^{1,c}$.
Then $\cX_S := ({F} + \sum_{j = 1}^{\tau} s_j {e}_j = 0) \subset \hat{\bA}^{n+\tau}_{W(k)} \to S := \hat{\bA}^{\tau}_{W(k)}$ 
is a $G$-equivariant versal deformation of $X$ in the following sense:

Suppose $\cX \to S'$ is a deformation of $X$ over a complete local affine $W(k)$-scheme $S'$ with residue field $k$, 
equipped with an action $G \to \Aut_{S'}(\cX)$ compatible with the action on $X$.
Then $\cX$ is isomorphic to the pullback of $\cX_S$ by some morphism $S' \to S$.
\end{thm}

\begin{rem} \label{rem:finite generation of T}
If $G = 1$ and $c = 1$, then $(F_{x_i}) \cdot M$ is simply the ideal generated by $F_{x_i}$, 
and $T^{1,c} \otimes k = k[[x_1, \dots, x_n]] / (F, F_{x_1}, \dots, F_{x_n})$ is the usual Tjurina algebra.

In general $T^{1,c}$ may not be a finitely generated $W(k)$-module,
even if $X$ is an isolated singularity.
For example, suppose $p \neq 2,3$ and let 
\[ 
 G = \biggl( \biggl\{ \begin{pmatrix} \pm 1 & & \\ & \pm 1 & \\ & & \pm 1 \end{pmatrix} \biggr\} \cap \SL_3 \biggr) \rtimes 
 \biggl\langle \begin{pmatrix} & 1 & \\ & & 1 \\ 1 & & \end{pmatrix} \biggr\rangle
\]
act on $W(k)[[x_1, \dots, x_3]]$ linearly,
and let $c = 1$, $F = x_1^2 + x_2^2 + x_3^2$.
(This is the case of $(\Tet, A_1)$.)
Then $W(k)[[x_1, \dots, x_3]]^G = W(k)[[A, B, C, \delta]] / 
(- \delta^2 + (-4 A^3 C^2 + A^2 B^2 + 18 A B C^2 - 4 B^3 - 27 C^4)$,
where $A = x_1^2 + x_2^2 + x_3^2$, $B = x_2^2 x_3^2 + x_3^2 x_1^2 + x_1^2 x_2^2$, 
$C = x_1 x_2 x_3$, and $\delta = (x_1^2 - x_2^2)(x_2^2 - x_3^2)(x_3^2 - x_1^2)$.
We have $M = \spanned{(x_1, x_2, x_3), (x_2 x_3, x_3 x_1, x_1 x_2)}$, and hence 
$T^{1,c} = W(k)[[A, B, C, \delta]] / (-\delta^2 + (\dots), A, C) \cong W(k)[[B, \delta]] / (-\delta^2 - 4 B^3)$
is not finitely generated as a $W(k)$-module.
\end{rem}

\begin{proof}
The proof is parallel to the one given in \cite{Greuel--Lossen--Shustin:deformations}*{proof of Theorem II.1.16} (which deals with deformations over $\bC$ without a group action).
We may assume that $\cX$ is embedded, i.e.\ $\cX = (\tilde{F} = 0) \subset \hat{\bA}^{n}_{W(k)} \times S'$, 
$S' = \Spec R' \subset \bA^{r}_{W(k)}$,
with $\tilde{F} \otimes_{R'} k = \bar{F}$.
We will find 
$\phi = (\phi_j) \in R'^{\tau}$,
$(h_i) \in M \otimes R'$,
$H \in W(k)[[x_1, \dots, x_n]]^{G=1} \otimes R'$
satisfying 
$\phi_j \otimes_{R'} k = 0$,
$h_i \otimes_{R'} k = x_i$, 
$H \otimes_{R'} k = 0$, 
and $\tilde{F}(h_i) = (1 + H) (F + \sum \phi_j e_j)$.
Then the conditions imply that
$\cX \cong \cX_{S} \times_{S} S'$ via the $G$-equivariant morphism defined by $h_i$, 
where the morphism $S' \to S$ is defined by $\phi$.
We construct such elements modulo $\idealm_{R'}^{l}$ by induction on $l \geq 1$. 
For $l = 1$ we take $\phi = 0$, $h_i = x_i$, $H = 0$.
At each induction step we have to, 
for a certain element $\xi \in W(k)[x_1, \dots, x_n]^{G = c}$,
find $\phi^{(l)}, h_i^{(l)}, H^{(l)}$ satisfying 
$\xi = \sum_{j = 1}^{\tau} e_j \phi_{j}^{(l)} - (F_{x_i}) \cdot (h_{i}^{(l)}) + F H^{(l)}$,
which is indeed possible from the definition of $e_j$.
\end{proof}

\begin{proof}[Proof of Proposition \ref{prop:local action}]
By Proposition \ref{prop:symplecticRDP}, the pair $(G, B \otimes k)$ is as in the list of Proposition \ref{prop:local action}.
For the cases $(G, A_1)$ with $G = \Tet, \Oct, \Ico$, we have nothing to prove.
Consider the other cases.
Since the assertion is \'etale local, we replace $B$ with its Henselization.

We first reduce the proposition to showing that the completion $\hat{B}$ (with respect to the maximal ideal) 
is of the form $\cO_K[[x,y,z]]/(F)$ with $F$ and the $G$-action as in the statement.
Suppose $\hat{B}$ is of this form. 
As in Theorem \ref{thm:versal G-deformation}, we define 
\[
M := \set{(h_i) \in B^{\oplus n} \mid \; \rho(g)(h_i) = \sum_{j} \rho(g)_{ij} h_j}
\]
with respect to the action $\rho$ as in the statement.
It suffices to find a coordinate $x', y', z' \in B$ satisfying $(x',y',z') \in M$ and $F(x', y', z') = 0$. 
It is easy to find a coordinate $x'', y'', z'' \in B$ satisfying $(x'',y'',z'') \in M$. 
We observe that $M$ is a finitely generated $B^G$-module.
(For example, if $G = C_n$ acts by $(x'', y'', z'') \mapsto (\zeta_n x'', \zeta_n^{-1} y'', z'')$,
then $M$ is generated by $(x'', y'', 0), (y''^{n-1}, x''^{n-1}, 0), (0, 0, 1)$.)
Hence the problem is reduced to a system of polynomial equations on $B^G$.
Since there is a solution in $\hat{B}^G$, we obtain a solution in $B^G$ by Lemma \ref{lem:approximation}.

Now consider $\hat{B}$.
We use Theorem \ref{thm:versal G-deformation}.
Let $F \in W(k)[x,y,z]$ be 
 as in the statement of Proposition \ref{prop:local action}, with all $q_l = 0$.
Define the $G$-action on $W(k)[[x,y,z]]$ as in the statement.
Let $c$ be the quadratic character with kernel $\spanned{\sigma}$ if $G = \Dic_{n}$
and the trivial character if otherwise, so that $F \in W(k)[[x,y,z]]^{G = c}$.
It remains to find generators of the module $T^{1,c}$.
Let us explain the case of $(\Dic_{n}, A_{m-1})$ with $m$ odd and $n$ even (other cases are easier).
We have $T^{1,c} = \spanned{x^{n} - y^{n}, z}$
as a $W(k)[[x^{n}+y^{n}, (x^{n}-y^{n})z, z^2]]/(\dots)$-module.
For the elements $(h_i) = (h_1,h_2,h_3) = (x,y,0), (y^{n-1}, -x^{n-1},0) \in M$,
we have $(F_{x_i}) \cdot (h_i) = -2z^m, y^{n}-x^{n}$ respectively.
Hence $z, z^3, \dots, z^{m-2}$ generates $T^{1,c}$.

The alternative forms in the cases $(\fS_3, D_4)$ and $(\fA_3, D_4)$,
are obtained as follows:
We write $F = x^2 + y^3 + z^3 -3yzA(x) + C(x)$ with $A, C \in \cO_K[x]$, 
we let $y_i = \zeta_3^{i} y + \zeta_3^{-i} z + A(x)$,
and then we have $x^2 + y_1 y_2 y_3 + (C(x) - A(x)^3) = y_1 + y_2 + y_3 - 3 A(x) = 0$.
\end{proof}

\begin{prop} \label{prop:local resolution}
	Let $B$ and $G$ be as in Proposition \ref{prop:local action}.
	Then, after replacing $K$ by a finite extension,
	$B$ admits a $G$-equivariant simultaneous resolution.
\end{prop}

\begin{proof}
	If $G = 1$, this is \cite{Artin:brieskorn}*{Theorem 2}. Suppose $G \neq 1$.\
	
	We first show that it suffices to give a simultaneous $G$-resolution after an \'etale base change.
	Indeed, assume that $B \to B_1$ is a local \'etale $G$-equivariant homomorphism
	and $f \colon X \to \Spec B_1$ is a simultaneous $G$-resolution.
	By extending $K$ we may assume that $B/\fm \to B_1/\fm_1$ is an isomorphism.
	Let $V = \Spec B$, $o \in V$ the closed point, and $V^* = V \setminus \{ o \}$.
	Define $V_1,o_1,V_1^*$ similarly.
	Write $R = V_1 \times_V V_1$, which is the \'etale equivalence relation on $V_1$ inducing $V = V_1 / R$.
	Then we have $R = \Delta(V_1) \sqcup R^*$, where $\Delta$ is the diagonal,
	and $R^* \subset V_1^* \times_{V^*} V_1^*$.
	Now let $R' = \Delta(X) \sqcup f^{*}(R^*) \subset X \times_V X$.
	Here $f^*(R^*)$ is isomorphic to $R^*$ since $f$ is an isomorphism over $V_1^*$.
	Then $R'$ is an \'etale equivalence relation on $X$ and 
	$X / R' \to V_1 / R = V$ is a simultaneous $G$-resolution.
	
	Thus it suffices to give a simultaneous resolution of $B'$ as in Proposition \ref{prop:local action},
	and we write simply $B$ in place of $B'$. 
	
	\medskip

	Suppose $\Sing(B)$ is $A_1$.
	
	The local Picard group $\Cl(B)$ of $B$ is isomorphic to $\bZ$
	(since $B \cong \cO_K[x,y,z]^{\hens}/(xy + z^2 + q_1 z + q_0)$ for some $q_1, q_0 \in \fp$).
	Let $I_+$ and $I_-$ be ideals of Weil divisors 
	that are the two generators of $\Cl(B)$.
	We will show that each of the two blow-ups at $I_+$ and $I_-$ is a $G$-resolution.
	To show this it suffices to check that it is a $H$-resolution for each cyclic subgroup $H \subset G$. Thus we may assume that $G$ is cyclic.
	We conclude as in the next case.
	
	(Applying Shepherd-Barron's result (see Remark \ref{rem:localresolution})
	to the case of $A_1$, it follows that there are no other resolution,
	hence \emph{any} simultaneous resolution is $G$-equivariant.)

\medskip

	(Case $(C_n, A_{m-1})$ ($m \geq 2$)):
	By replacing $K$ by a finite extension,
	we obtain $F = xy + \prod_{i = 1}^{m} (z-\alpha_i)$ for some $\alpha_i \in \fp$.
	(Since the generic fiber is smooth it follows that $\alpha_i$'s are distinct.)
	Let $I_j = (x, \prod_{i = 1}^j (z - \alpha_i) )$ ($j = 1, \ldots, m-1$).
	Then these ideals are $G$-invariant and 
	the blow-up at the ideal $I = I_1 I_2 \cdots I_{m-1}$ is a simultaneous $G$-resolution.
	
	\medskip
	
	(Cases $(\Dih_n, A_{m-1})$ ($m \geq 2$ even) and $(\Dic_n, A_{m-1})$ ($m \geq 3$ odd, $n$ even)):
	By replacing $K$ by a finite extension,
	we obtain $F = xy + \prod_{i = 1}^{m} (z-\alpha_i)$ 
	for some $\alpha_i \in \fp$
	satisfying $\alpha_{m+1 -i} = - \alpha_i$
	(hence $\alpha_{(m+1)/2} = 0$ if $m$ is odd).
	Define $I_j$ as in the previous case.
	Then, because of the identity $xy = - \prod (z - \alpha_i)$,
	the blow-up at $\tau(I_j) = (y, (z - \alpha_{m+1-j}) \cdots (z - \alpha_{m-1}) (z - \alpha_{m}))$
	coincides with the blow-up at $I_{m-j} = (x, (z - \alpha_1) (z - \alpha_2) \cdots (z - \alpha_{m-j}))$.
	This shows that the blow-up at $I_j I_{m-j}$ is $\tau$-equivariant (even though the ideal itself is not $\tau$-stable).
	Likewise, 
	the blow-up at $I = \prod I_j$ is $\tau$-equivariant and hence is a simultaneous $G$-resolution.

	\medskip
	
		For each remaining case, it suffices to give a partial simultaneous $G'$-resolution of $B'$. 
Here, 
we define a \emph{partial (simultaneous) resolution} of a local ring $B$ as in Theorem \ref{thm:localresolution}
to be a proper morphism $f \colon X \to \Spec B$ from an algebraic space $X$
such that, $f$ is an isomorphism on the generic fiber,
$f$ is not an isomorphism on the special fiber,
all singularity of $X_0$ are RDPs (if any),
and the minimal resolution of $X_0$ is the minimal resolution of $\Spec B_0$ ($B_0 = B \otimes k$).
It follows that $X_0$ has less RDPs than $\Spec B_0$
(when $A_n,D_n,E_n$ are counted with weight $n$).

	\medskip
	
	(Case $(C_2, D_m)$ ($m \geq 4$)): 
	Write $y^{m-1} + \sum_{l = 0}^{m-2} q_l y^l = -(A(y)^2 + y C(y)^2)$
	with polynomials $A, C \in \cO_K[y]$. 
	(To find such $A,C$, we write $y^{m-1} + \sum q_l y^l = \prod (y + \beta_i^2)$, 
	and write $\prod (\beta_i + \sqrt{-y}) = \sqrt{-1} (A + C \sqrt{-y})$ in $\cO_K[\sqrt{-y}]$ with $A,C \in \cO_K[y]$).
	Then we have $F = (x + A)(x - A) + y (z + C)(z - C)$
	and the ideal $I = (x + A, z + C)(x - A, z - C)$ is $G$-invariant.
	The blow-up at $I$ is a partial $G$-resolution,
	whose special fiber has a single singularity, of type $A_{m-2}$.

	\medskip
	
	(Cases $(\fS_3, D_4)$ and $(\fA_3, D_4)$) 
	($p$ may be $= 2$):
	We use the alternative form of Proposition \ref{prop:local action}:
	$B = \cO_K[x,y_1,y_2,y_3]^{\hens} / (y_1 y_2 y_3 + Q(x), y_1 + y_2 + y_3 - R(x))$.
	
	Write $R(x) = r_1 x + r_0$ ($r_0 \in \fp$.
	Take the decomposition $Q(x) = (h_1 x + h_0) (a_1 x + a_0) (b_1 x + b_0)$
	with $a_1,b_1,h_0 \in \cO_K^*$, $a_0,b_0 \in \fp$, and $h_1 \in \cO_K$.
	We have $a_1 b_0 - a_0 b_1 \neq 0$, since otherwise the generic fiber has singularity.
	Write $H(x) = h_1 x + h_0$. 
	Take nonzero $\gamma,\delta \in \fp$ satisfying 
	$ \gamma b_j + \delta a_j + \gamma \delta r_j + (\gamma \delta)^2 h_j = 0$
	for $j = 0,1$: 
	the existence of a solution follows from a straightforward argument using the conditions on the coefficients. 
	If $p \neq 2$ then 
	(we have $r_1 = 0$ and) we moreover assume $H(x) = 1$, $a_1 = b_1 = 1$, $a_0 = - b_0$,
	and then we have $\gamma = - \delta$.
	Then we have 
	\begin{align*}
	F_1 &= 
	H(x) (a_1 x + a_0 + \gamma y_i) (b_1 x + b_0 + \delta y_i) \\
	&\quad {}  + y_i (y_{i+1} + \gamma \delta H(x)) (y_{i+2} + \gamma \delta H(x)) + \varepsilon_i 
	\end{align*}
	in $\cO_K \polyHens{x,y_1,y_2,y_3}$,
	where 
	\begin{align*}
	\varepsilon_i 
	&= - H(x) y_i (((b_1 x + b_0) \gamma + (a_1 x + a_0) \delta +\gamma \delta R(x) + (\gamma \delta)^2 H(x)) + \gamma \delta F_2) \\
	&= - \gamma \delta H(x) y_i F_2
	\in (F_2).
	\end{align*}
	Let $I_i = (a_1 x + a_0 + \gamma y_{i-1}, y_{i} + \gamma \delta H(x)) \subset B$.
	Then we have $\rho(I_i) = I_{\rho(i)}$ for each $\rho \in G \subset \fS_3$.
	Indeed, clearly $(123) I_i = I_{i+1}$ and,
	if $G = \fS_3$ (in which case $p \neq 2$),
	$(i,i+1) I_i = I_{i+1}$ follows from the equality
	\begin{align*}
	- a_1 x + a_0 + \gamma y_{i-1}
	&= - (a_1 x + a_0 + \gamma y_i)
	- \gamma (y_{i+1} + \gamma \delta H) \\
	& \qquad 
	+ \gamma F_2
	+ (2 a_0 + \gamma^2 \delta H + \gamma R) \\
	&\equiv - (a_1 x + a_0 + \gamma y_i)
	- \gamma (y_{i+1} + \gamma \delta H)
	\pmod{(F_2)}
	\end{align*}
	in $\cO_K \polyHens{x,y_1,y_2,y_3}$,
	as we have $2 a_0 + \gamma^2 \delta H + \gamma R = 0$ by the conditions on $a_i,b_i,h_i,r_i$ and $\gamma,\delta$.
	Hence the ideal $J = I_1 I_2 I_3$ is $G$-invariant.
	The blow-up at $J$ is a partial $G$-resolution,
	whose special fiber has a single singularity, of type $A_{1}$.

	\medskip
	
	(Case $(C_2, E_6)$) 
	($p$ may be $= 3$):
	We can write $F = x^2 - (z^2 - H(y))^2 + 4 T(y)$
	with $H = \sum_{i = 0}^{2} h_i y^i$
	and $T = \sum_{i = 0}^{4} t_i y^i$
	with $h_0,h_1,t_0,t_1,t_2 \in \fp$, $t_3 \in \cO_K^*$, $h_2,t_4 \in \cO_K$.
	Take a decomposition $T = RS$ with $R,S \in \cO_K[y]$ with $\deg R = 1,2$, $\deg S = 2$, 
	$\ord_y(R \bmod \fp) = 1$, and $\ord_y(S \bmod \fp) = 2$.
	Write $R = \sum_{i=0}^{2} r_i y^i$ and $S = \sum_{i=0}^{2} s_i y^i$.
	We find $A \in \cO_K[y]$ (of degree $\leq 2$), $b,c_0 \in \fp$ and $c_1 \in \cO_K^*$ satisfying, letting $C(y) = c_1 y + c_0$,
	\begin{align*}
	H &= -A + 2 b^2 R\\
	-H^2 + 4 T &= -A^2 - 4 R C^2
	\end{align*}
	so that $F = (x + z^2 + A)(x - z^2 - A) + 4 R(bz + C)(bz - C)$.
	Then the blowup at the ($G$-invariant) ideal $(x + z^2 + A, bz + C)(x - z^2 - A, bz - C)$ is a partial simultaneous $G$-resolution,
	whose special fiber has a single singularity, of type $D_4$.
	By eliminating $A$, we need 
	$b^4 R - b^2 H + S = -C^2$.
	For the left hand side to be a square we need 
	\[
	(r_1 b^4 - h_1 b^2 + s_1)^2 - (r_0 b^4 - h_0 b^2 + s_0)(r_2 b^4 - h_2 b^2 + s_2) = 0,
	\]
	which indeed has solution $b$ in $\fp$ since $r_0,s_0,s_1,h_0,h_1 \in \fp$ and $r_1 \in \cO_K^*$.
\end{proof}

\begin{proof}[Proof of Theorem \ref{thm:localresolution}]
If $G$ is symplectic then this follows from Proposition \ref{prop:local resolution} inductively.

\medskip

Now assume $G$ is non-symplectic.
We may assume that $G$ is cyclic with generator $g$.

First we reduce to the special case of $A_1$ or $A_2$ and $G$ acting on the exceptional curves transitively.
Assume we have a $G$-resolution $\pi \colon \cX \to \cX'$ and let $E$ be the exceptional divisor.
Then, by the shape of the Dynkin diagram, 
the set of components of $E$ 
has a $G$-orbit $O$ consisting of one or two adjacent elements.
Then $\pi$ factors through a $G$-equivariant morphism $\pi'' \colon \cX \to \cX''$
that contracts exactly components in $O$ (as in the proof of \cite{Liedtke--Matsumoto}*{Proposition 4.2}).
Such $\pi''$, which gives a $G$-equivariant simultaneous resolution of $A_1$ or $A_2$,
cannot exist according to the special case.

Consider the special case. 
It suffices to show that the completion $\hat{B}$ of $B$ does not admit a $G$-equivariant simultaneous resolution.
For simplicity we write $B$ in place of $\hat{B}$.
Assume $\pi \colon \cX \to \Spec B$
is a $G$-resolution.
Let $E_1, \ldots, E_m$ be the exceptional curves ($m = 1,2$).
Then $\pi$ induces a $G$-equivariant homomorphism
$(R^1 \pi_* \cO^*_{\cX})_{\bar x} \to \Cl (B)$
where $\bar x$ is the geometric point of $\Spec B$ above the maximal ideal,
and $\Cl(B)$ is the local Picard group.
This map is surjective since,
for each \'etale neighborhood $V$ of $\bar x$,
the group $\Cl(\cO(V))$ is generated by classes of Weil divisors $D$ on $V$ 
and we can take $\cO(\pi^{-1}(D)) \in \Pic(\pi^{-1}(V))$ as their inverse images.
Since the source is generated by the classes of $E_1, \ldots, E_m$, 
the $G$-action on it factors through a group of order $m!$,
and if $m = 2$ its eigenvalue $-1$ has multiplicity $1$. 
It suffices to check that the $G$-action on $\Cl(B)$
is not a quotient of this type.

We will give a normal form $B \cong \cO_K[[x,y,z]] / (F)$. 
We may assume that the generator $g$ acts by $g(x,y,z) = (ax,by,cz)$ and that $F \in \cO_K[[x,y,z]]$ satisfies $g(F) = eF$.
Since the action is non-symplectic we have $e \neq abc$.
Let $\bar{F}_2$ be the degree $2$ part of $\bar{F} = (F \bmod \idealp)$.
We may assume that $\bar{F}_2 = xy + z^2$ (resp.\ $\bar{F}_2 = xy$ or $\bar{F}_2 = x^2 - y^2$) in the case of $A_1$ (resp.\ $A_2$).
Indeed, if $xy,yz,zx$ do not appear in $\bar{F}_2$ then exactly three (resp.\ two) of $x^2,y^2,z^2$ appear, 
and then by a coordinate change we obtain the desired form.
Then by an argument similar to the proof of Theorem \ref{thm:versal G-deformation}, we may assume that $\bar{F} = xy + z^2$ (resp.\ $\bar{F} = xy + z^3$).
Then by Theorem \ref{thm:versal G-deformation} we obtain 
$F = xy + z^2 + q_1 z + q_0$ (resp.\ $F = xy + z^3 + q_2 z^2 + q_1 z + q_0$),
and some of $q_l$ (those not compatible with the $G$-action) are automatically zero.
Since the generic fiber is non-singular, at least one of $q_1$ and $q_0$ should be nonzero.
Hence we may assume that $F$ and the $G$-action are one of the following,
where the first case is $A_1$ and the others are $A_2$:
\begin{itemize}
\item $g(x,y,z) = (ax, a^{-1}y, -z)$, $F = xy + z^2 + q_0$.
\item $g(x,y,z) = (ax, -a^{-1}y, -z)$, $F = xy + z^3 + q_1 z$. 
\item $g(x,y,z) = (ax, a^{-1}y, \zeta_3 z)$, $F = xy + z^3 + q_0$. 
\item $g(x,y,z) = (x, -y, z)$, $F = x^2 - y^2 + z^3 + q_2 z^2 + q_1 z + q_0$.
\item $g(x,y,z) = (x, -y, \zeta_3 z)$, $F = x^2 - y^2 + z^3 + q_0$.
\end{itemize}

Consider the first case ($A_1$).
Since $\Cl(B)$ is an infinite cyclic group 
generated by $[D_+] = -[D_-]$,
where $D_{\pm} = (x = z \pm \sqrt{-q_0} = 0)$,
$g$ acts on $\Cl(B)$ by $-1$
(cf.\ \cite{Liedtke--Matsumoto}*{Section 7}). 
Hence $\Cl(B)$ cannot be the image of $(R^1 \pi_* \cO^*_{\cX})_{\bar x}$.

Consider the other cases ($A_2$). Only in the latter two cases $g$ swaps $E_1$ and $E_2$.
To compute the action on $\Cl (B)$,
we can use the generators $[D^+_i], [D^-_i]$ ($i = 1,2,3$),
subject to relations $[D^+_i] + [D^-_i] = \sum [D^+_i] = \sum [D^-_i] = 0$,
defined by $D^+_i = (x + y, z - \alpha_i)$, $D^-_i = (x - y, z - \alpha_i)$
where $\prod (z - \alpha_i) = z^3 + \dots + q_0$ is the decomposition.
In the the fourth case the action of $g$ on $\Cl (B)$ is of order $6$.
In the third case, the action of $g$ on $\Cl (B)$ is of order $2$
but its eigenvalue $-1$ has multiplicity $2$.
Hence $\Cl(B)$ cannot be the image of $(R^1 \pi_* \cO^*_{\cX})_{\bar x}$.
\end{proof}

\section{$G$-equivariant flops} \label{sec:G-flops}

In this section we prove the existence and termination of $G$-equivariant flops for $G$-models of K3 surfaces
(more generally surfaces with numerically trivial canonical divisor), 
relying on the results in our previous paper \cite{Liedtke--Matsumoto}*{Section 4}.

\subsection{Results of Liedtke--Matsumoto}

In this subsection we recall
the results of 
\cite{Liedtke--Matsumoto}*{Section 3} 
on the existence and termination of flops 
between proper smooth models of a fixed K3 surface.

The following definitions, taken from \cite{Liedtke--Matsumoto}*{Section 4},
are adjustments of those in \cite{Kollar--Mori}*{Definitions 3.33 and 6.10} to our situation of models of surfaces.

\begin{defn}
  Let $X$ be a smooth and proper surface over $K$ with numerically 
  trivial $\omega_{X/K}$ that has a proper smooth model $\cX\to\Spec\cO_K$.
  Then,
  \begin{enumerate}
    \item A proper and birational morphism $f \colon \cX \to \cY$ over $\cO_K$ 
      is called a \emph{flopping contraction}
      if $\cY$ is normal, $\omega_{\cX/\cO_K}$ is numerically $f$-trivial, and 
      the exceptional locus of $f$ is of codimension at least $2$.
    \item If $D$ is a Cartier divisor on $\cX$, then a
      birational map $\cX \rationalto \cX^+$ over $\cO_K$
      is called a $D$-\emph{flop}
      if it decomposes into a flopping contraction $f \colon \cX \to \cY$ 
      followed by (the inverse of) a flopping contraction $f^+ \colon \cX^+ \to \cY$
      such that $-D$ is $f$-ample and $D^+$ is $f^+$-ample,
      where $D^+$ denotes the strict transform of $D$ on $\cX^+$.
   \item A morphism $f^+$ as in (2) is also called a \emph{flop} of $f$.
  \end{enumerate} 
\end{defn}
A flop of $f$, if exists, does not depend on the choice of $D$ by
\cite{Kollar--Mori}*{Corollary 6.4, Definition 6.10}.
This justifies talking about flops without referring to $D$.

In \cite{Liedtke--Matsumoto}*{Section 4} we proved that:

\begin{prop}[existence and termination of flops, \cite{Liedtke--Matsumoto}*{Propositions 4.2 and 4.5}] \label{prop:flops}
Let $X$ be a surface over $K$ with numerically trivial canonical divisor, and $\cY$ a proper smooth model of $X$ over $\cO_K$.
Let $\cL$ be an ample line bundle on $X$, 
and denote by $\cL_0$ the restriction to $\cY_0$ of the extension to $\cY$ of $\cL$.
Then we have the following. 

\begin{enumerate}
\item \label{LM:existence of flop}
Let $Z = \bigcup C_i$ be a union of finitely many $\cL_0$-negative integral curves $C_i$.
Then we have a flopping contraction $f \colon \cY \to \cY'$ contracting $C_i$'s and no other curves,
 and we have its flop $\cY \rationalto \cY^+$ over $\cO_K$.
$\cY^+$ is again a proper smooth model of $X$ over $\cO_K$.
\item \label{LM:termination of flop}
After applying finitely many flops as in (\ref{LM:existence of flop}), 
we arrive at a proper smooth model $\cY^\dagger$ of $X$ such that $\cL^\dagger_0$ is nef.
\end{enumerate}
\end{prop}

\begin{rem}
(i)
As showed in the proof of \cite{Liedtke--Matsumoto}*{Proposition 4.2},
there are only finitely many $\cL_0$-negative curves,
and over $\overline k$ those curves are smooth rational curves forming finitely many ADE configurations.
In particular the irreducible components of $Z_{\overline k}$ are again smooth rational curves again forming finitely many ADE configurations.

(ii) 
In \cite{Liedtke--Matsumoto}*{Proposition 4.2}, part (\ref{LM:existence of flop}) is stated only for a single integral (not necessarily geometrically integral) curve $Z$.
But the same proof applies to the case of connected $Z$, 
and we can reduce the general case to the connected case
(since the flop at one connected component of $Z$ does not affect the $\cL_0$-degrees of the curves on the other components).
\end{rem}

We recall another result. 
\begin{prop}[\cite{Liedtke--Matsumoto}*{Proposition 4.6}] \label{prop:projectivemodel}
Let $X$ be a K3 surface over $K$ with good reduction.
Let $\cL$ an ample line bundle of $X$.
Then there exists a projective RDP model $\cX$ of $X$,
the extension of $\cL$ to which is relatively ample.
Such $\cX$ is unique up to isomorphism.
\end{prop}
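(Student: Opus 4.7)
The plan is to apply Proposition~\ref{prop:flops}(b) to maneuver to a proper smooth model on which the extension of $\cL$ has a nef restriction to the special fiber, and then realize $\cX$ as the relative $\Proj$ of the resulting semiample linear system.

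For existence, start with a proper smooth model $\cY$ of $X$ over $\cO_K$ (exists by good reduction). Since $\cY_0$ is irreducible and equals $\divisor(\pi)$ for a uniformizer $\pi \in \cO_K$, the localization sequence gives a surjection $\Pic(\cY) \surjto \Pic(X)$, so $\cL$ extends to some $\tilde\cL$ on $\cY$. By Proposition~\ref{prop:flops}(b), after finitely many flops we reach a proper smooth model $\cY^\dagger$ and an extension $\tilde\cL^\dagger$ of $\cL$ such that $\tilde\cL_0^\dagger := \tilde\cL^\dagger\rvert_{\cY^\dagger_0}$ is nef; since $(\tilde\cL_0^\dagger)^2 = \cL^2 > 0$, it is also big. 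On the K3 surface $\cY^\dagger_0$ a nef and big line bundle is semiample (in any characteristic), so for $n \gg 0$ the linear system $|(\tilde\cL_0^\dagger)^{\otimes n}|$ is base-point-free and contracts the $\tilde\cL_0^\dagger$-zero curves, which form ADE configurations of $(-2)$-curves, producing an RDP K3 surface $\cX_0$. Vanishing of $H^i(\cY_0^\dagger, (\tilde\cL_0^\dagger)^{\otimes n})$ for $i > 0,\ n > 0$ together with cohomology-and-base-change yields that each $R^{(n)} := H^0(\cY^\dagger, (\tilde\cL^\dagger)^{\otimes n})$ is a finite free $\cO_K$-module with the expected restrictions to both fibers. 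Then $\cX := \Proj_{\cO_K}(\bigoplus_n R^{(n)})$ is a projective flat $\cO_K$-scheme with generic fiber $X$, special fiber $\cX_0$, and with $\cO_\cX(1)$ a relatively ample extension of a suitable power of $\cL$.

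For uniqueness, suppose $\cX_1, \cX_2$ are two such projective RDP models with relatively ample extensions $\tilde\cL_1, \tilde\cL_2$. After a finite extension of $K$, take simultaneous resolutions $\pi_i \colon \cY_i \to \cX_i$ via Lemma~\ref{lem:resolve rdp}. The $\cY_i$ are proper smooth models of $X$ and are therefore connected by a series of flops by \cite{Liedtke--Matsumoto}*{Proposition 3.3} (as invoked in the proof of Proposition~\ref{prop:def of sp}). Since flops are isomorphisms in codimension $1$ and sections of a line bundle on a normal proper scheme are determined in codimension $1$, the graded section rings $\bigoplus_n H^0(\cX_i, \tilde\cL_i^{\otimes n}) = \bigoplus_n H^0(\cY_i, \pi_i^* \tilde\cL_i^{\otimes n})$ agree as graded subalgebras of $\bigoplus_n H^0(X, \cL^{\otimes n})$; hence $\cX_1 \cong \cX_2$ via the common $\Proj$, and Galois descent recovers the isomorphism over $\cO_K$.

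The main obstacle will be verifying that the relative $\Proj$ construction really has the claimed properties: finite generation of $R$ as an $\cO_K$-algebra (by lifting generators from the finitely generated $R \otimes k$), flatness of $\cX$ over $\cO_K$ (from torsion-freeness of each $R^{(n)}$), and the correct base change to each fiber — in particular, that the special fiber $\cX_0$ of $\Proj R$ is precisely the semiample contraction of $\cY_0^\dagger$. All of this rests on the vanishing $H^1(\cY_0^\dagger, (\tilde\cL_0^\dagger)^{\otimes n}) = 0$ and standard base-change formalism.
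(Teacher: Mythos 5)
Your existence argument is essentially the paper's: the paper also starts from a proper smooth model, applies Proposition~\ref{prop:flops} to make the extension of $\cL$ nef on the special fiber, and then takes the image of $\lvert \cL^{\otimes m}\rvert$ for suitable $m$ — which is the same object as your $\Proj$ of the section ring, resting on the same inputs (semiampleness of nef and big bundles on K3 surfaces in any characteristic, vanishing of higher cohomology, and cohomology and base change). Where you genuinely diverge is uniqueness: the paper disposes of it in one line by invoking a Matsusaka--Mumford type result (two polarized families with identified generic fibers and non-ruled special fibers are isomorphic), whereas you pass to simultaneous resolutions $\cY_i \to \cX_i$, use the fact that any two proper smooth models are isomorphic outside codimension $\geq 2$ (\cite{Liedtke--Matsumoto}*{Proposition 3.3}), and conclude that the graded section rings coincide inside $\bigoplus_n H^0(X,\cL^{\otimes n})$, so the two models have a common $\Proj$. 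Your route is more self-contained given the machinery already in the paper, at the cost of needing the small supporting facts that $\pi_{i*}\cO_{\cY_i}=\cO_{\cX_i}$ (normality of the RDP total space) and that the extension of $\cL$ to a smooth model is unique because the special fiber is irreducible and principal — both of which hold, so the argument goes through; the descent step at the end is also fine since equality of $\cO_K$-submodules can be checked after the faithfully flat base change $\cO_K \to \cO_{K'}$.
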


\subsection{$G$-equivariant flops}

We prove the following $G$-equivariant version.

\begin{prop} \label{prop:G-flops}
Let $X$, $\cY$, $\cL$ as in Proposition \ref{prop:flops}.
Assume $X$ is equipped with an action of a finite group $G$, 
$\cY$ is a $G$-model,
and $\cL$ is $G$-invariant.
\begin{enumerate}
\item \label{LM-G:existence of flop}
 Let $Z$ as in part (\ref{LM:existence of flop}) of Proposition \ref{prop:flops}, 
and assume $Z$ is $G$-stable.
Then $G$ acts canonically on the resulting model $\cY^+$ and the flop is a $G$-equivariant rational map.
\item \label{LM-G:termination of flop}
After applying finitely many flops as in (\ref{LM-G:existence of flop}) , 
we arrive at a proper smooth $G$-model $\cY^\dagger$ of $X$ such that $\cL^\dagger_0$ is nef.
\end{enumerate}
\end{prop}

\begin{proof}
(\ref{LM-G:existence of flop})
This essentially follows from the uniqueness of the flop, as follows.

Giving a $G$-action on $\cY^+$ compatible with that on $X$ is equivalent to giving, 
for each $g \in G$, 
an isomorphism $\cY^+ \isomto g^* \cY^+$ extending the identity $X \isomto X$,
where $g^* \cY^+$ is the normalization of $\cY^+$ in the pullback $g \colon X \to X$.
(It is required that the isomorphisms be compatible with the group structure, 
but once we have morphisms this is automatic since it is trivially true on a dense open subspace $X$.)

Now consider the diagram $\cY \to \cY' \from \cY^+$, the flop at $Z$.
By taking the normalization under the pullback $g \colon X \to X$, we obtain
$ g^* \cY  \to  g^* \cY'  \from  g^* \cY^+ $.
By taking composite with the isomorphism $\cY \isomto g^* \cY$ induced from the $G$-action on $\cY$,
this diagram becomes $\cY \to g^* \cY' \from g^* \cY^+$, the flop at $g^*(Z)$.
Since $g^*(Z) = Z$, the two flopping contractions are the same and the
two flops are the same, hence there are isomorphisms
$ \cY' \isomto g^* \cY' $ and $ \cY^+ \isomto g^* \cY^+$ extending the identity on the generic fiber.

(\ref{LM-G:termination of flop})
Assume $\cL_0$ is not nef, and take an $\cL_0$-negative curve $C$ on $\cY$.
Since $\cL$ is $G$-invariant, images of $C$ under $G$ are all $\cL_0$-negative.
We can apply part (\ref{LM-G:existence of flop}) to the union $Z$ of those images.
Therefore we can conclude from part (\ref{LM:termination of flop}) of Proposition \ref{prop:flops}.
\end{proof}

\begin{prop} \label{prop:G-projectivemodel}
Let $X,\cL$ be as in Proposition \ref{prop:projectivemodel},
$G \subset \Aut(X)$ a subgroup, 
and assume $\cL$ is invariant under $G$.
Then the resulting projective RDP model $\cX$ is naturally a $G$-model.
\end{prop}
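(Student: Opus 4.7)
The plan is to leverage the uniqueness clause of Proposition \ref{prop:projectivemodel} directly. For each $g \in G$, I would build an automorphism $\tilde g \colon \cX \to \cX$ over $\Spec \cO_K$ whose restriction to the generic fiber is $g \colon X \to X$, and then verify that the assignment $g \mapsto \tilde g$ is a group homomorphism by appealing again to uniqueness.

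Concretely: fix $g \in G$ and consider the two identifications of the generic fiber of $\cX$ with $X$, namely $\mathrm{id}_X$ and $g$. Both give $\cX$ the structure of a projective RDP model of $X$. Since $\cL$ is $G$-invariant, the extension of $\cL$ to $\cX$ under either identification is still a relatively ample line bundle extending $\cL$. Proposition \ref{prop:projectivemodel} asserts uniqueness of such a model up to isomorphism (that is, up to an isomorphism over $\cO_K$ whose generic fiber is the identity on $X$ after identifying via the two given structures). This produces a unique automorphism $\tilde g \colon \cX \isomto \cX$ over $\cO_K$ with $\tilde g \otimes K = g$.

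For composition, given $g,h \in G$, both $\tilde g \circ \tilde h$ and $\widetilde{gh}$ are automorphisms of $\cX$ over $\cO_K$ restricting to $gh$ on the generic fiber; by the same uniqueness they coincide, and similarly $\widetilde{\mathrm{id}} = \mathrm{id}$. This gives the desired $G$-action on $\cX$, compatible by construction with the action on $X$.

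The only subtle point — and probably the main obstacle — is that one must know the uniqueness in Proposition \ref{prop:projectivemodel} is strong enough: namely, that between two projective RDP models with relatively ample extensions of $\cL$ there is a (necessarily unique) isomorphism over $\cO_K$ inducing a prescribed isomorphism of generic fibers. This is exactly what the Matsusaka--Mumford type argument sketched there provides, so nothing further is needed beyond invoking it.
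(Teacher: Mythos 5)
Your proposal is correct and follows essentially the same route as the paper, which likewise deduces the $G$-action from the uniqueness clause of Proposition \ref{prop:projectivemodel} (applied to the model $g^*\cX$ obtained by twisting the generic-fiber identification by $g$). The only cosmetic difference is that you verify the homomorphism property by invoking uniqueness again, whereas the paper observes it is automatic because two morphisms of separated models agreeing on the dense generic fiber coincide.
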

\begin{proof}
The uniqueness induces a $G$-action, as in the previous proposition.
\end{proof}

\begin{rem}
This can be applied only to finite $G$, 
since for an ample line bundle $\cL$ on a K3 surface
$\Aut(X,\cL)$ is finite \cite{Huybrechts:lecturesK3}*{Proposition 5.3.3}.
\end{rem}

\section{Proof of the main theorems} \label{sec:proof}

Using the results of Sections \ref{sec:localresolution} and \ref{sec:G-flops}, we can prove Theorem \ref{thm:sympfin}.
We also prove 
Theorem \ref{thm:criterion}(\ref{thm:criterion:symplectic}).

\begin{thm} \label{thm:criterion}
Let $X$ be a (smooth) K3 surface over $K$,
$G$ a finite subgroup of $\Aut(X)$ of order prime to $p$,
and $\cX$ a projective RDP $G$-model of $X$.
\begin{enumerate}
\item \label{thm:criterion:symplectic}
If $G_x = \Stab(x)$ is symplectic for any $x \in \cX^\nonsm$,
then $\cX$ admits a $G$-equivariant simultaneous resolution.
In particular $G$ is extendable.
\item \label{thm:criterion:non-symplectic}
If $G_x$ is non-symplectic for some $x \in \cX^\nonsm$,
then $G$ is not extendable.
\end{enumerate}
\end{thm}

\begin{proof}[Proof of Theorem \ref{thm:sympfin} and Theorem \ref{thm:criterion}(\ref{thm:criterion:symplectic})]
In the case of Theorem \ref{thm:sympfin}, taking a $G$-invariant ample line bundle of $X$ and then applying Proposition \ref{prop:G-projectivemodel}),
we obtain a projective RDP $G$-model $\cX$ (which is in particular a scheme).
In the case of Theorem \ref{thm:criterion}(\ref{thm:criterion:symplectic}), let $\cX$ be as in the statement.
We show that $\cX$ admits a simultaneous $G$-resolution.
By Theorem \ref{thm:localresolution},
for each $x$ in the non-smooth locus $\Sigma = \cX^{\nonsm} \subset \cX$ there is a simultaneous $G_x$-equivariant resolution of $\Spec \cO_{\cX,x}$,
where $G_x = \Stab(x)$.
(Note that the two notions of symplecticness coincide by Lemma \ref{lem:symplectic}(\ref{lem:symplectic:3}).)
We choose a family $(\cY(x) \to \Spec \cO_{\cX,x})_{x \in \Sigma}$ of local simultaneous $G_x$-equivariant resolution
satisfying $g^* \cY(x) = \cY(g^{-1}(x))$.
To show that this is possible,
we consider a $G$-orbit $O$ of $\Sigma$,
take one $x \in O$ and choose one simultaneous $G_x$-resolution $\cY(x)$, 
and then for each other $x' = g^{-1}(x) \in O$ we take $\cY(x')$ to be $g^* \cY(x)$,
which does not depend on the choice of $g$ since $\cY(x)$ is a $G_x$-resolution.
Gluing $\cY(x)$, we obtain a (global) $G$-equivariant simultaneous resolution of $\cX'$.
\end{proof}

\bigskip

Next we consider Theorem \ref{thm:non-extendable}.

As explained in the introduction, we have two methods to prove non-extendability of automorphisms.
In this section we introduce the first one, which uses Theorem \ref{thm:criterion}(\ref{thm:criterion:non-symplectic})
based on birational geometry of $G$-models developed in Section \ref{sec:G-flops},
to prove the case of non-symplectic automorphisms of finite order prime to $p$.

\begin{proof}[Proof of Theorem \ref{thm:criterion}(\ref{thm:criterion:non-symplectic})]
Assume there exists, after extending $K$,
a proper smooth $G$-model $\cY$ of $X$.
Note that then $\omega_{\cY/\cO_K}$ is numerically trivial, as it is trivial on the generic fiber.

Take a relative ample line bundle on $\cX$,
which we may assume to be $G$-invariant.
Then by Propositions \ref{prop:G-flops} and \ref{prop:G-projectivemodel},
we obtain a proper smooth $G$-model $\cY^\dagger$
equipped with a $G$-equivariant morphism $\cY^\dagger \to \cX$.
In other words it is a simultaneous $G$-resolution of $\cX$.
But since $G_x$ is non-symplectic this contradicts Theorem \ref{thm:localresolution}.
\end{proof}

We give examples satisfying assumptions of Theorem \ref{thm:criterion}(\ref{thm:criterion:non-symplectic})
for $p$ arbitrary, $G = \bZ/l\bZ$, $2 \leq l\leq 11$ prime, $l \neq p$.

We fix the notation on elliptic surfaces.
A Weierstrass form 
$F(x,y,t) = y^2 + a_1(t)xy + a_3(t)y + x^3 + a_2(t) x^2 + a_4(t) x + a_6(t) = 0$
over a ring $R$, with $a_i \in R[t]$ with $\deg a_i \leq 2i$,
is considered as a hypersurface of degree $12$ of the weighted projective bundle $\bP(\cO(-4) \oplus \cO(-6) \oplus \cO)$ with weight $4,6,1$ over $\bP^1$.
In particular, $X$ has $\Spec R[x,y,t]/(F)$
and $\Spec R[x',y',s]/(F')$ as open subschemes,
where $F' = y'^2 + a'_1(s)x'y' + a'_3(s)y' + x'^3 + a'_2(s) x'^2 + a'_4(s) x' + a'_6(s)$,
where $a'_i(s) = s^{2i} a_i(1/s) \in R[s]$, with gluing given by $x' = xt^{-4}, y' = yt^{-6}, s = t^{-1}$.
(To cover $X$ by affine schemes we need two more pieces corresponding to $x = y = \infty$ and $x' = y' = \infty$,
but usually they are not important and are omitted.)
If $R$ is a field and these two affine subschemes have only RDP singularities,
then the projective variety is an RDP K3 surface.
If $R = \cO_K$, 
we have a similar criterion for the projective scheme to be an RDP model.

For two primes $p,l$ with $2 \leq l \leq 11$,
we define $X_{l,p}$ and its automorphism $\sigma_{l,p}$ by
\begin{align*}
 X_{11,p} &\colon y^2 + yx + x^3 - (t^{11} - p)            = 0, && y'^2 + s^2 y'x' + x'^3 - s  (1 - p s^{11})            = 0, \\
 X_{ 7,p} &\colon y^2 + yx + x^3 - (t^{ 7} - p)            = 0, && y'^2 + s^2 y'x' + x'^3 - s^5(1 - p s^{ 7})            = 0, \\
 X_{ 5,p} &\colon y^2 + yx + x^3 - (t^{ 5} - p)(t^{5} - 1) = 0, && y'^2 + s^2 y'x' + x'^3 - s^2(1 - p s^{ 5})(1 - s^{5}) = 0, \\
 X_{ 3,p} &\colon y^2 + yx + x^3 - (t^{ 3} - p)(t^{9} - 1) = 0, && y'^2 + s^2 y'x' + x'^3 -    (1 - p s^{ 3})(1 - s^{9}) = 0, \\
 X_{ 2,p} &\colon y^2 + yx + x^3 - (t^{ 2} - p)(t^{8} - 1) = 0, && y'^2 + s^2 y'x' + x'^3 - s^2(1 - p s^{ 2})(1 - s^{8}) = 0, \\
\end{align*}
and $\sigma_{l,p} \colon X_{l,p} \to X_{l,p} \colon (x,y,t) \mapsto (x, y, \zeta_l t), 
(x',y',s) \mapsto (\zeta_l^{-4} x', \zeta_l^{-6} y', \zeta_l^{-1} s)$.
Non-symplecticness is checked by using a global $2$-form
$\omega = (2y + x)^{-1} dx \wedge dt = -(2y' + s^2 x')^{-1} dx' \wedge ds$.
Then the singular points of $X_{l,p}$ in characteristics $0$ and $p$ are as follows
(here, and in the next section, we do not distinguish analytically non-isomorphic RDPs of the same Dynkin diagram):

\begin{tabular}{llll}
	\toprule
$l$ & & char.\ 0 & char.\ $p$ \\
\midrule
each ${ l}$ & $(x,y,t) = (0,0,0)$       & ---    & $A_{l-1}$        \\
 ${ 5,3,2}$ & $(x,y,t) = (0,0,1)$       & ---    & $A_{l^e-1}$ if $p = l$ (*) \\
     ${ 7}$ & $(x',y',s) = (0,0,0)$     & $E_8$  & $E_8$            \\
     ${ 5}$ & $(x',y',s) = (0,0,0)$     & $A_2$  & $A_2$ if $p \neq 2$, 
                                                   $E_7$ if $p = 2$ \\
     ${ 3}$ & $(x',y',s) = (0,1,0)$     & ---    & $D_4$ if $p = 2$ \\
     ${ 2}$ & $(x',y',s) = (0,0,0)$     & $A_2$  & $A_2$ if $p \neq 2$,
                                                  $E_7$ if $p = 2$ \\
\bottomrule
\end{tabular} 

(*) $l^e = 5,9,8$ for $l = 5,3,2$ respectively (this appears in the factor $t^{l^e}-1$ in the formula).

Thus these formula define projective RDP $\sigma$-models $\cX$.
Let $\tilde \cX$ the RDP model obtained as in the first paragraph of the proof of Lemma \ref{lem:resolve rdp}.
This is a projective RDP model.
Moreover, since at each step each RDP on the generic fiber is $\sigma$-fixed,
$\tilde \cX$ admits a natural $\sigma$-action.
Now assume $l \neq p$.
Since the singularity of $\tilde \cX$ at $(x,y,t) = (0,0,0)$ on the special fiber is fixed by $\sigma$, the stabilizer of this point is non-symplectic,
and we can apply Theorem \ref{thm:criterion}(\ref{thm:criterion:non-symplectic}) 
to obtain examples for Theorem \ref{thm:non-extendable} for $G = \bZ/l\bZ$, $2 \leq l \leq 11$, $l \neq p$.

\medskip

We will also give examples which have projective smooth models
for the case $G = \bZ/2\bZ$, $p \neq 2,3$.

Take an integer $a$ satisfying 
$a \equiv 0 \pmod p$ and $a \neq 0$.
Let $F = a^2 z^6 + (x^3 - x z^2)^2 + (y^3 - y z^2)^2$.
Let $\cX$ be the double covering of $\bP^2_{\cO_K}$ defined by $w^2 = F(x,y,z)$.
It is clear that the points defined by $(p = w = x^3-xz^2 = y^3-yz^2 = 0)$ are singular and hence $S = \cX^\nonsm$ contains these points.
A straightforward computation shows that
$\cX$ has no other singular points, 
and that all the points of $S$ are $k$-rational and are RDPs of type $A_1$.

Let $\iota$ be the deck transformation $(w,x,y,z) \mapsto (-w,x,y,z)$.
This defines an involution on $\cX$, and 
all points of $S$ are fixed by $\iota$.
Non-symplecticness of (the restriction $\iota \restrictedto{X}$ to the generic fiber $X$ of) $\iota$ can be showed either by directly computing $(\iota \restrictedto{X})^*(\omega)$ 
for a global $2$-form $\omega = w^{-1} xyz d\log(y/x) \wedge d\log(z/x)$, 
or by checking that $\Fix(\iota \restrictedto{X}) = (w = 0)$ is $1$-dimensional (use Lemma \ref{lem:sympfin}).
By Theorem \ref{thm:criterion}(\ref{thm:criterion:non-symplectic}), $\iota$ is not extendable.

The Weil divisors $\cC_+$ and $\cC_-$ defined by $\cC_{\pm} = (w \pm a z^3 = x^3 - x z^2 + y^3 - y z^2 = 0)$
are non-Cartier exactly at $S$, 
and it can be easily seen that $\Bl_{\cC_+} \cX$ and $\Bl_{\cC_-} \cX$ are projective smooth models of $\cX$.
(Since $\iota$ interchanges $\cC_+$ and $\cC_-$ and the two blow-ups are not isomorphic, 
these smooth models are not $\iota$-models.)

\medskip

The second method of proving non-extendability is to use Proposition \ref{prop:criterion for non-ext} and Corollary \ref{cor:criterion for non-ext}(\ref{item:specialization is trivial}).

In Section \ref{subsec:ker-p-ns} (resp.\ \ref{subsec:ker-p-s})
we give examples, for $2 \leq p \leq 19$ (resp.\ $2 \leq p \leq 7$), 
of non-symplectic (resp.\ symplectic) automorphisms of order $p$ specializing to the identity on the characteristic $p$ fiber.
In Section \ref{subsec:ker-infinite}
we give examples, for $p \geq 2$, of (symplectic and non-symplectic) infinite order automorphisms specializing to the identity.
Together with Corollary \ref{cor:criterion for non-ext}(\ref{item:specialization is trivial}) 
these examples prove the remaining cases of Theorem \ref{thm:non-extendable}.

\section{Automorphisms specializing to identity} \label{sec:kersp}

\subsection{Restriction on the residue characteristic for finite order case}

\begin{prop} \label{prop:finiteorder}
Let $g$ be an automorphism of finite order of a K3 surface $X$ over $K$ in characteristic $0$.
If $\spen(g) = 1$, then the order of $g$ is a power of the residue characteristic $p$.
\end{prop}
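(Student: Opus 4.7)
The plan is to reduce to the case where $g$ has prime order $l \neq p$ and derive a contradiction in the symplectic and non-symplectic subcases separately. Since $\spen$ is a group homomorphism (Proposition~\ref{prop:def of sp}(2)), writing $\ord(g) = p^a m$ with $\gcd(m,p) = 1$ and replacing $g$ by $g^{p^a}$ preserves the hypothesis $\spen(g) = \id$ and reduces the problem to showing that an automorphism of order prime to $p$ lying in $\Ker(\spen)$ is trivial. Assuming this fails, picking a prime $l \mid \ord(g)$ and replacing $g$ by $g^{\ord(g)/l}$ yields an automorphism of prime order $l \neq p$ in $\Ker(\spen)$, and we seek a contradiction.

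If $g$ is symplectic, Theorem~\ref{thm:sympfin} provides a proper smooth $g$-model $\cY$ of $X$ after possibly extending $K$. Proper smooth base change then yields a $g$-equivariant isomorphism $\Het^2(X_{\overline K}, \bQ_\ell) \cong \Het^2((\cY_0)_{\overline k}, \bQ_\ell)$ for any prime $\ell \neq p$. Since $\spen(g) = \id$, the group $g$ acts trivially on the right-hand side, so $\tr(g^*) = 22$ on $\Het^2$. On the other hand, Lemma~\ref{lem:sympfin} computes $\tr(g^*) = \varepsilon(l) - 2 \leq 6$ for $l \geq 2$, a contradiction.

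If $g$ is non-symplectic, fix any proper smooth model $\cX$ of $X$ and a generator $\omega'$ of the free rank-one $\cO_K$-module $H^0(\cX, \Omega^2_{\cX/\cO_K})$ (Lemma~\ref{lem:Omega2-RDPmodel}). By Proposition~\ref{prop:def of sp}(1), $g$ extends to a birational self-map $\tilde g$ of $\cX$ with indeterminacy locus $Z$ of codimension $\geq 2$ contained in $\cX_0$, and smoothness of $\cX$ gives $H^0(\cX, \Omega^2_{\cX/\cO_K}) = H^0(\cX \setminus Z, \Omega^2_{\cX/\cO_K})$, so $\tilde g^*$ acts on this rank-one module by multiplication by some unit $u \in \cO_K^*$ whose image in $K^*$ is the eigenvalue of $g^*$ on $H^0(X, \Omega^2_{X/K})$. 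After enlarging $K$ to contain $\mu_l$, non-symplecticness together with $\ord(g) = l$ prime forces this eigenvalue, and hence $u$ itself, to be a primitive $l$-th root of unity. Restricting to the dense open $\cX_0 \setminus Z_0 \subset \cX_0$, on which $\omega'|_{\cX_0}$ is nowhere vanishing, the induced action of $\spen(g)$ is multiplication by the reduction $\bar u \in k^*$; the hypothesis $\spen(g) = \id$ then forces $\bar u = 1$, contradicting the injectivity of the reduction map $\mu_l(K) \injto \mu_l(k)$, which holds precisely because $l \neq p$.

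The main technical point in the whole argument is this last step in the non-symplectic case: verifying that the birational extension $\tilde g$ genuinely acts on $H^0(\cX, \Omega^2_{\cX/\cO_K})$, and that its eigenvalue reduces modulo $\fm_K$ to the eigenvalue of $\spen(g)^*$ on $H^0(\cX_0, \Omega^2_{\cX_0/k})$. Both statements are consequences of Proposition~\ref{prop:def of sp}(1) (codimension $\geq 2$ indeterminacy, together with smoothness) and Lemma~\ref{lem:Omega2-RDPmodel} (compatibility of the generators of $\Omega^2$ on the total space and on the special fiber).
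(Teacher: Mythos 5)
Your proof is correct, and your non-symplectic case is essentially the paper's argument: the paper also reduces to prime order $l$, observes that $g^*\omega = \zeta\omega$ for an $l$-th root of unity $\zeta$ (using the generator $\omega$ from Lemma \ref{lem:Omega2-RDPmodel}, on an RDP model rather than your smooth one), and deduces $\norm{\zeta-1}<1$ from $\spen(g)=\id$, which forces $l=p$ when $\zeta\neq 1$. Where you genuinely diverge is the symplectic case. The paper argues locally: by Lemma \ref{lem:sympfin} a nontrivial symplectic automorphism of finite order has a fixed point $x$; one specializes $x$ to a point $x_0$ of a proper RDP scheme $g$-model, notes that $g$ acts nontrivially on $\cO_{\cX,x_0}$, and then invokes the Maschke-type argument from the proof of Proposition \ref{prop:localresolution}(1) to conclude that a nontrivial prime-to-$p$ action on such a local $\cO_K$-algebra cannot become trivial on the special fiber. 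You instead invoke the global Theorem \ref{thm:sympfin} to produce a proper smooth $g$-model, and then compare traces on $\Het^2$ via proper smooth base change ($22$ versus $\varepsilon(l)-2\leq 6$). This is logically sound --- Theorem \ref{thm:sympfin} does not depend on Proposition \ref{prop:finiteorder}, so there is no circularity --- but it imports the full equivariant-resolution machinery of Sections 3--4 where the paper gets by with a short local linear-algebra argument; on the other hand your route makes the link with Proposition \ref{prop:criterion for non-ext} transparent (it is exactly the ``coincidence of characteristic polynomials'' criterion run in reverse) and avoids having to produce and specialize a fixed point. One small point worth making explicit in your version: you should justify that a proper smooth model, which is a priori only an algebraic space, still satisfies the $H^0(\Omega^2)$ statements of Lemma \ref{lem:Omega2-RDPmodel} (stated there for scheme models), or simply work on a projective RDP model as the paper does; this is harmless but should be said.
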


\begin{proof}
By replacing $g$ with a power, we may assume $g$ is of prime order $l$.

We have $g^* \omega = \zeta \omega$ with $\zeta$ an $l$-th root of $1$,
where $\omega$ is as in Lemma \ref{lem:Omega2-RDPmodel}.
Since $\spen(g) = 1$, we have $\norm{\zeta - 1}_p < 1$. 
If $g$ is non-symplectic ($\zeta \neq 1$), this implies $l = p$. 

Assume now $g$ is symplectic.
Any symplectic automorphism on a K3 surface of finite prime-to-characteristic order has at least one fixed point (Lemma \ref{lem:sympfin}), 
so take $x \in \Fix(g)$. We may assume $x$ is $K$-rational.
Take a proper RDP scheme $g$-model $\cX$ (use Proposition \ref{prop:G-projectivemodel} to find such $\cX$)
and let $x_0 \in \cX_0$ be the specialization of $x$.
We can diagonalize the action of $g$ on $\cO_{\cX,x_0}$ as $(x_1, \dots, x_n) \mapsto (a_1 x_1, \dots, a_n x_n)$ ($n = 2$ or $n = 3$)
where $a_i$ are $l$-th roots of $1$ .
Since this action is nontrivial, at least one of $a_i$ is nontrivial, and if $l \neq p$ then its reduction to $\cX_0$ is still nontrivial.
\end{proof}

\begin{cor}
If $p \geq 23$, then no nontrivial automorphism of finite order of a K3 surface over $K$  specializes to the identity.
\end{cor}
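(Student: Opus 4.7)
The plan is to combine Proposition \ref{prop:finiteorder} with the classical bound on prime orders of finite-order automorphisms of K3 surfaces in characteristic zero, which is already stated in the paper just after Theorem \ref{thm:non-extendable}.

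First I would argue by contradiction: suppose $g \in \Aut(X)$ is a nontrivial automorphism of finite order with $\spen(g) = \id$. By Proposition \ref{prop:finiteorder}, the order of $g$ is a power of $p$, say $\ord(g) = p^n$ with $n \geq 1$. Then $g' := g^{p^{n-1}}$ is a nontrivial automorphism of $X$ of exact prime order $p$, and of course $\spen(g') = \spen(g)^{p^{n-1}} = \id$ as well (though this last observation is not strictly needed).

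Next I would invoke the classical characteristic-zero bound: any automorphism of prime order $l$ of a K3 surface over a field of characteristic $0$ satisfies $l \leq 7$ in the symplectic case and $l \leq 19$ in the non-symplectic case. This bound is recalled in the discussion surrounding Theorem \ref{thm:non-extendable} and follows (for the symplectic case) from Lemma \ref{lem:sympfin}. Whether $g'$ acts symplectically or non-symplectically, we conclude $p \leq 19$, contradicting $p \geq 23$.

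I do not expect any obstacle: all of the substantive content has been absorbed into Proposition \ref{prop:finiteorder} and the cited order bounds, so the corollary is a two-line consequence.
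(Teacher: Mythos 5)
Your argument is correct and is essentially the paper's own proof: the paper likewise combines Proposition \ref{prop:finiteorder} with the characteristic-zero bound on prime orders (citing Nikulin for the statement that no automorphism of prime order $\geq 23$ exists on a K3 surface in characteristic $0$). Your extra step of passing to the power $g^{p^{n-1}}$ of exact order $p$ and splitting into the symplectic/non-symplectic cases just makes explicit what the paper leaves implicit.
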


\begin{proof}
A K3 surface in characteristic $0$ does not admit an automorphism of prime order $\geq 23$
(\cite{Nikulin:auto}*{Sections 3,5}).
\end{proof}

\begin{rem}
The converse of Proposition \ref{prop:finiteorder} does not hold in general, 
that is, there exists automorphisms of order $p$ specializing to a nontrivial automorphism,
as will be seen for the case $p = 11$ in Example \ref{ex:11}. 
However, if $p \in \{13,17,19 \}$, then the converse is true,
as there is only one K3 surface with automorphism of order $p$,
and in that case the automorphism specializes to identity, as we see in Section \ref{subsec:131719}.
\end{rem}

In the next two subsections we give examples of a K3 surface over $\bQ_p(\zeta_p)$
equipped with a non-symplectic (resp.\ symplectic) automorphism of order $p$ ($2 \leq p \leq 19$ (resp.\ $2 \leq p \leq 7$))
which specializes to identity.
The strategy of the construction is simple: 
We give (an open subscheme of) a proper RDP model on which the automorphism $g$ acts as $g \colon (x_i) \mapsto (a_i x_i)$ with some $p$-th roots $a_i$ of $1$. 
Since $p$-th roots of $1$ are congruent to $1$ modulo the maximal ideal of $\bZ_p[\zeta_p]$, $\spen(g)$ is clearly trivial.
We only need to check that the model is indeed an RDP model (i.e.\ that there are no worse singularities) 
and that $g$ is not trivial on the generic fiber.

\subsection{Non-symplectic examples of finite order} \label{subsec:ker-p-ns}

For $3 \leq p \leq 19$,
let $X_p$ the example of \cite{Kondo:trivially}*{Section 7} of a K3 surface in characteristic $0$ with a non-symplectic automorphism $\sigma$ of order $p$.
Explicitly, $X_{p}$ and $\sigma = \sigma_p$ is given by the Weierstrass form
\begin{align*}
 X_{ 3} &\colon y^2 = x^3 - t^5 (t-1)^5 (t+1)^2, & \sigma(x,y,t) &= (\zeta_{ 3} x, y, t), \\
 X_{ 5} &\colon y^2 = x^3 + t^3 x + t^7, & \sigma(x,y,t) &= (\zeta_{ 5}^3 x, \zeta_{ 5}^2 y, \zeta_{ 5}^2 t), \\
 X_{ 7} &\colon y^2 = x^3 + t^3 x + t^8, & \sigma(x,y,t) &= (\zeta_{ 7}^3 x, \zeta_{ 7}   y, \zeta_{ 7}^2 t), \\
 X_{11} &\colon y^2 = x^3 + t^5 x + t^2, & \sigma(x,y,t) &= (\zeta_{11}^5 x, \zeta_{11}^2 y, \zeta_{11}^2 t), \\
 X_{13} &\colon y^2 = x^3 + t^5 x + t,   & \sigma(x,y,t) &= (\zeta_{13}^5 x, \zeta_{13}   y, \zeta_{13}^2 t), \\
 X_{17} &\colon y^2 = x^3 + t^7 x + t^2, & \sigma(x,y,t) &= (\zeta_{17}^7 x, \zeta_{17}^2 y, \zeta_{17}^2 t), \\
 X_{19} &\colon y^2 = x^3 + t^7 x + t,   & \sigma(x,y,t) &= (\zeta_{19}^7 x, \zeta_{19}   y, \zeta_{19}^2 t), 
\end{align*}
where $\zeta_{p}$ is a primitive $p$-th root of unity.
Non-symplecticness can be checked by computing the action on a global $2$-form $\omega = y^{-1} dx \wedge dt$.

\begin{prop}
Let $2 \leq p \leq 19$ be a prime.
Let $X$ be either $X_{p,p}$ in Section \ref{sec:proof} ($2 \leq p \leq 11$) or $X_p$ above ($3 \leq p \leq 19$) over $K = \bQ_p(\zeta_p)$,
and $\sigma$ the corresponding automorphism of order $p$.
Then $\tilde X$ has potential good reduction, 
and we have $\spen(\sigma) = \id$.
Hence $\sigma \in \Aut(\tilde X)$ is not extendable.
\end{prop}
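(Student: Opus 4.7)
The plan is to exhibit, for each of the surfaces $X = X_{p,p}$ or $X = X_p$, an explicit projective $\sigma$-equivariant RDP model $\cX$ over $\cO_K$ and read off specialization from the formulas.

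First I would observe that each of the given Weierstrass equations has coefficients in $\cO_K$ and defines a projective $\cO_K$-scheme $\cX$ (in the sense explained just before the definition of $X_{l,p}$ in Section \ref{sec:proof}), with $\cX \otimes_{\cO_K} K = X$. Because the coordinate change describing $\sigma$ is defined over $\cO_K$ (its matrix has entries in $\bZ[\zeta_p]$), the automorphism $\sigma$ extends to an automorphism of $\cX$ over $\cO_K$. Next, I would verify that the special fiber $\cX_0$ has only RDP singularities, so that $\cX$ is a proper RDP $\sigma$-model. For $X_{p,p}$ this is exactly the content of the table in Section \ref{sec:proof}; for the Kond\=o surfaces $X_p$ ($3 \leq p \leq 19$) it is a direct local computation on each of the affine pieces of the Weierstrass model, which has to be carried out case by case but is routine. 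Once this is established, Lemma \ref{lem:resolve rdp} provides, after finite extension of $K$, a simultaneous resolution $\cY \to \cX$ which is a proper smooth model of $\tilde X$; in particular $\tilde X$ has potential good reduction.

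For the specialization assertion, the key point is that $\sigma$ acts on the coordinates $(x,y,t)$ (and on the corresponding $(x',y',s)$) by multiplication by $p$-th roots of unity, i.e.\ by elements of $\mu_p(K) \subset \cO_K^\times$. Since $\zeta_p - 1$ lies in the maximal ideal $\fp = (1-\zeta_p)$ of $\cO_K = \bZ_p[\zeta_p]$, the induced automorphism of $\cX_0$ is the identity on every affine chart, and hence $\sigma\rvert_{\cX_0} = \id$. Applying Proposition \ref{prop:compute sp} (with $\cX_1 = \cX_2 = \cX$ and $g$ the automorphism $\sigma$, after removing the singular locus $\Sigma$ of codimension $\geq 2$), this identifies $\spen(\sigma) \in \Aut(\tilde X_0)$ with the automorphism of the minimal resolution of $\cX_0$ induced by $\id \colon \cX_0 \to \cX_0$, which is $\id$.

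Finally, since $\sigma$ is nontrivial on $X$ (non-symplectic in every case, so it acts nontrivially already on $H^0(X, \Omega^2_{X/K})$), we have $\sigma \neq \id$ in $\Aut(X) = \Aut(\tilde X)$ while $\spen(\sigma) = \id$. Corollary \ref{cor:criterion for non-ext}(1)(\ref{condi:non-inj}) then yields non-extendability of $\sigma$. The main obstacle in executing this plan is the case-by-case verification that each special fiber is an RDP surface, in particular identifying what happens at the points not already listed (for the Kond\=o surfaces $X_p$); the rest of the argument is forced by the structure of the formulas, since the $\sigma$-action is manifestly given by multiplication by elements of $1 + \fp$.
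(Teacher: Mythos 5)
Your proposal is correct and follows essentially the same route as the paper: verify the given Weierstrass equations define projective RDP $\sigma$-models, invoke Lemma \ref{lem:resolve rdp} for potential good reduction, observe that the action by $p$-th roots of unity reduces to the identity modulo $(1-\zeta_p)$ so that $\spen(\sigma)=\id$, and conclude via Corollary \ref{cor:criterion for non-ext}(1)(\ref{condi:non-inj}). The only substance you defer --- the case-by-case identification of the singular points of the special fibers of the Kond\=o surfaces $X_p$ --- is exactly what the paper's proof supplies in its tables, so nothing is missing in principle.
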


\begin{proof}
We will see that the same equation defines an RDP model of $X$. 
Then by Lemma \ref{lem:resolve rdp} 
that RDP model admits a simultaneous resolution, 
and then since $\zeta_p = 1$ in $\overline \bF_p$ we have $\spen(\sigma) = \id$,
and $\sigma$ is not extendable by Proposition \ref{cor:criterion for non-ext}(\ref{item:specialization is trivial}).
Since we have already checked $X_{p,p}$ in Section \ref{sec:proof},
it remains to check $X_p$ is an RDP model.

On both fiber of $X_3$,
there are two $E_8$ at $(x,y,t) = (0,0,0),(0,0,1)$
and one $A_2$ at $(0,0,-1)$.
The generic fiber has no other singularities.
The special fiber has one more $A_2$ at $(x',y',s) = (1,0,0)$ and no other singularities.

For $5 \leq p \leq 19$, 
the singularities of fibers of $X_p$ are as follows,
where $c_p = - 4/27$ if $p = 5,7$ and $c_p = - 27/4$ if $p = 11,13,17,19$ 
and $b_p = (-3/2)(a_6/a_4)$, 
where $a_{2i}$ is the coefficient of $x^{3-i}$.

\begin{tabular}{llllllll}
	\toprule
$p$ & & 5 & 7 & 11 & 13 & 17 & 19 \\ 
\midrule
$(x,y,t) = (0,0,0)$ & (both fibers) & 
$E_7$  &
$E_7$  &
$A_2$  &
--- &
$A_2$  &
---  \\
$(x',y',s) = (0,0,0)$ & (both fibers) & 
$E_8$ & 
$E_6$ & 
$E_7$ & 
$E_7$ & 
$A_1$ & 
$A_1$  \\
$(x,y,t) = (b_p, 0, c_p^{1/p})$ & (special fiber) & 
$A_4$    &
$A_6$    &
$A_{10}$ &
$A_{12}$ &
$A_{16}$ &
$A_{18}$  \\
\bottomrule
\end{tabular}

\end{proof}
\begin{rem}
Actually, 
the automorphism $\sigma$ induces a $\mu_p$-action on the special fiber of $X_p$.
Such actions will be studied in a subsequent paper \cite{Matsumoto:k3mun}.
\end{rem}
\begin{rem}
For $p \in \{13,17,19\}$, $\spen(\sigma_p) = \id$ also follows from Dolgachev--Keum's result \cite{Dolgachev--Keum:auto}*{Theorem 2.1}
that K3 surfaces in characteristic $p$ do not admit automorphisms of order $p$ if $p \geq 13$.

For $p \geq 5$, 
potential good reduction of $X_p$ can be shown by the following argument.
Since $\sigma$ is a non-symplectic automorphism 
the field $\bQ(\zeta_p)$ acts on $T(X_p)_\bQ$, where $T$ denotes the transcendental lattice and $_\bQ$ denotes $\otimes \bQ$.
By using the formula 
\[
\rho \geq 2 + \sum_{F \colon \text{fiber}} ((\text{the number of irreducible components in $F$}) -1) ,
\]
where $\sum$ is taken over (non-smooth) fibers $F$ of $X_p \to \bP^1$,
we can easily check that $\rank_{\bQ(\zeta_p)} T(X_p)_\bQ = 1$, i.e.\ $X_p$ has complex multiplication by $\bQ(\zeta_p)$.
Then by \cite{Matsumoto:goodreductionK3}*{Theorem 6.3} $X_p$ has potential good reduction.
(The cited theorem has an assumption on the residue characteristic, 
but under the presence of elliptic fibration it can be weakened to $p \geq 5$ using argument for case (c) after Lemma 3.1 of \cite{Matsumoto:goodreductionK3}.)
\end{rem}

\subsection{Non-symplectic automorphisms of order $13,17,19$} \label{subsec:131719}

\begin{prop} \label{prop:131719}
Let $l \in \{13,17,19\}$.

(1) There exists (up to isomorphism) a unique K3 surface in characteristic $0$ equipped with an automorphism group of order $l$,
and is isomorphic to $(X_l, \spanned{\sigma})$ defined in Section \ref{subsec:ker-p-ns}.

(2) $X_l$ has potential good reduction over $\bQ_p$ for any $p$ including $l$,
and $\sigma$ is extendable if and only if $p \neq l$.
\end{prop}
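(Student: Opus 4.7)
The plan is to address the three assertions in sequence; the main challenge lies in the extendability claim for $p \neq l$.

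For part (1), I would rely on Nikulin's classification of non-symplectic K3 automorphisms of prime order. Since $\phi(l) = l-1 \in \{12, 16, 18\}$, the transcendental lattice $T(X)$ must have rank exactly $l-1$ and becomes a rank-one free $\bZ[\zeta_l]$-module under $\sigma^*$; this determines $T$ and hence $\NS(X) = T^\perp$ uniquely up to isomorphism (see \cite{Nikulin:auto}*{Section 5}, \cite{Kondo:trivially}). Kondo's explicit $(X_l, \spanned{\sigma})$ in Section~\ref{subsec:ker-p-ns} realizes the required lattice invariants, so it is the unique example.

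For part (2), the potential good reduction at every prime $p$ follows from the CM structure $T(X_l)_\bQ \cong \bQ(\zeta_l)$ combined with \cite{Matsumoto:goodreductionK3}*{Theorem 6.3}, as already indicated in the Remark after Section~\ref{subsec:ker-p-ns}. The non-extendability at $p = l$ is also immediate from that Remark: \cite{Dolgachev--Keum:auto}*{Theorem 2.1} rules out order-$l$ automorphisms of K3 surfaces in characteristic $l \geq 13$, forcing $\spen(\sigma) = \id$, and Corollary~\ref{cor:criterion for non-ext}(\ref{condi:non-inj}) then yields non-extendability.

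The main remaining case is extendability when $p \neq l$. Here I would invoke Theorem~\ref{thm:recipe for non-ext}(1): because $\sigma$ is non-symplectic of prime order $l$, a stabilizer $G_x$ is symplectic precisely when it is trivial, so it suffices to produce a projective RDP $\spanned{\sigma}$-model $\cX$ whose special fiber has no singular points---in other words, a smooth $\sigma$-model. Starting from any proper smooth model $\cY$ over $\cO_{K'}$ (which exists by potential good reduction) and any $\sigma$-invariant ample line bundle $\cL$ on $X_l$, I would apply $\sigma$-equivariant flops (Proposition~\ref{prop:G-flops}(b)) to arrive at a smooth $\sigma$-model $\cY^\dagger$ on which $\cL^\dagger_0$ is nef, and then argue that $\cL^\dagger_0$ is in fact ample. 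For this step I would use the characteristic-$p$ analog of Nikulin's classification (which goes through because $\gcd(l,p)=1$ makes the action tame, so the same lattice-theoretic arguments apply) to identify $\cY_0^\dagger$ with the unique K3 surface in characteristic $p$ admitting an order-$l$ non-symplectic automorphism and to compare its N\'eron--Severi lattice with that of $X_l$ via the specialization map.

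The principal technical obstacle is controlling the specialization of $\NS$ at primes of supersingular reduction, where the Picard rank of $\cY_0^\dagger$ may jump up to $22$; there one must verify that any ``extra'' $(-2)$-curves appearing only on the special fiber lie in $\sigma$-orbits of size $l$, so that none is $\cL^\dagger_0$-trivial individually, thereby forcing $\cL^\dagger_0$ to be ample and the associated projective RDP model to be smooth.
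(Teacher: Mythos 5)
Your treatment of part (1) and of the case $p=l$ matches the paper in spirit (the paper cites Oguiso--Zhang for uniqueness and gets $\spen(\sigma)=\id$ either from the explicit RDP model or from Dolgachev--Keum, exactly as you suggest). But the two remaining steps have genuine problems. First, potential good reduction ``at every prime $p$'' does not follow from the CM argument you cite: \cite{Matsumoto:goodreductionK3}*{Theorem 6.3} carries an assumption on the residue characteristic, and the Remark you point to only claims the CM route works for $p\geq 5$. For $p=2,3$ the paper instead verifies directly that the Weierstrass equation defines a proper RDP model (by listing its singular points) and invokes Lemma \ref{lem:resolve rdp}; your proposal has no substitute for this.

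Second, and more seriously, your extendability argument for $p\neq l$ does not close. You reduce to producing a smooth (or trivially-stabilized RDP) $\sigma$-model via equivariant flops and then try to force $\cL^\dagger_0$ to be ample by arguing that any extra $(-2)$-curves on the special fiber lie in $\sigma$-orbits of size $l$, ``so that none is $\cL^\dagger_0$-trivial individually.'' This inference is not valid: $\cL^\dagger_0$ is $\sigma$-invariant, so if it were trivial on one curve of an orbit it would be trivial on all of them, and an orbit of $13$, $17$, or $19$ disjoint $(-2)$-curves is not a priori excluded in a supersingular N\'eron--Severi lattice of rank $22$. You would also need a characteristic-$p$ classification of non-symplectic order-$l$ actions that is nowhere established. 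The paper avoids all of this by brute force: it computes the singular points of the explicit projective RDP model $X_l$ in characteristic $p$ and observes that either they coincide with those of the generic fiber (so the canonical resolution of Lemma \ref{lem:resolve rdp} --- blow up the closures of the generic-fiber RDPs, then apply Artin --- is automatically $\sigma$-equivariant), or, in the only exceptional cases $p=2$, $l\in\{13,19\}$, the extra $A_1$'s form a single free $\sigma$-orbit, so every stabilizer is trivial, hence symplectic, and Theorem \ref{thm:recipe for non-ext}(1) applies. You should replace your lattice-theoretic endgame with this explicit computation, or supply an actual proof that $\cL^\dagger_0$-trivial curves cannot occur.
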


\begin{proof}
(1) 
This is (announced in \cite{Vorontsov:automorphisms}*{Theorem 7} and)
proved by Oguiso--Zhang \cite{Oguiso--Zhang:non-symplectic}*{Corollary 3}. 

(2) 
The case $p = l$ is done in the previous proposition. Assume $p \neq l$.

If $p \neq 2$ (and $p \neq l$), 
we easily observe that
the singularity of $X_l$ in characteristic $p$ 
is the same to that in characteristic $0$.
If $p = 2$ and $l = 17$, 
we use another coordinate 
$x_1 = 2^{-14/17} x$, $y_1 = 2^{-21/17}(y+t)$, $t_1 = 2^{-4/17} t$.
Then the equation is $- y_1(y_1 - t_1) + x_1^3 + t_1^7 x_1 = 0$,
and the singularity in characteristic $2$ is the same to that in characteristic $0$
(an $A_2$ at $(x_1,y_1,t_1) = (0,0,0)$
and an $A_1$ at $(x_1',y_1',s_1') = (0,0,0)$).
In both cases, we have a canonical simultaneous resolution as in the first part of the proof of Lemma \ref{lem:resolve rdp},
and $\sigma$ extends to that proper smooth model.

If $p = 2$ and $l = 13$ (resp.\ $l = 19$),
in addition to the RDP $(x',y',s) = (0,0,0)$ of the same type $E_7$ (resp.\ $A_1$) to that in characteristic $0$, 
there are extra singularities in characteristic $2$:
$(x,y,t) = (a^5,a,a^2)$ (resp.\ $(a^7,a,a^2)$) are RDPs of type $A_1$
for the $13$-th (resp.\ $19$-th) roots $a$ of $1$,
and $\sigma$ acts on these points cyclically.
The stabilizer of each point is trivial, in particular symplectic.
First we resolve $(x',y',s) = (0,0,0)$ as in the previous case,
and then apply Theorem \ref{thm:criterion}(\ref{thm:criterion:symplectic}) to obtain a proper smooth $\sigma$-model.
\end{proof}

\begin{example} \label{ex:11}
For $l \leq 11$ the situation is different.
The following is a $1$-dimensional example over $K$ of residue characteristic $11$
in which extendability depends on the parameter.

For each $q \in K$, consider the RDP K3 surface and the (non-symplectic) automorphism defined by the equation
\[
 y^2 = x^3 + x + (t^{11} - q)
\]
and $g \colon (x,y,t) \mapsto (x, y, \zeta t)$,
$\zeta = \zeta_{11}$.
This is one of the two $1$-dimensional families 
in the classification of Oguiso--Zhang \cite{Oguiso--Zhang:order-11} 
of K3 surfaces equipped with automorphisms of order $11$.

Letting $b = \sqrt{-1/3}$,
$r = (q + 2b^3)^{1/11}$,
$x' = x - b$, $w = t - r$,
and $a_i = (\zeta^i - 1) / (\zeta - 1)$,
we have
\[
 y^2 = x'^3 + 3b x'^2 + \prod_{i = 0}^{10} (w - a_i r (\zeta-1)), 
\]
$g \colon (x',y,w) \to (x',y, \zeta w + r (\zeta - 1))$.

If $\norm{q^2 + 4/27} < \norm{11}^{-22/10}$,
equivalently $\norm{r (\zeta - 1)} < 1$, 
(where $\norm{\cdot} = \norm{\cdot}_{11}$ is the $11$-adic norm,)
then this equation defines a proper RDP model and 
we have $\spen(g) = \id$, hence $g$ is not extendable.

If $\norm{q^2 + 4/27} \geq \norm{11}^{-22/10}$,
equivalently $\norm{r (\zeta - 1)} \geq 1$, 
then letting $\alpha = ((r(\zeta - 1))^{11})^{-1/6}$, 
$X = \alpha^2 x'$, $Y = \alpha^3 y$, $u = w / (r(\zeta - 1))$,
we have a proper smooth model
\[
 Y^2 = X^3 + 3b \alpha^2 X^2 + \prod (u - a_i),
\]
$g \colon (X,Y,u) \mapsto (X, Y, \zeta u + 1)$.
Thus $g$ is extendable.

(Dolgachev--Keum \cite{Dolgachev--Keum:order11} gave a classification of 
a K3 surface in characteristic $11$ equipped with an automorphism of order $11$: 
it is either of the form 
\[
 X_\varepsilon \colon y^2 + x^3 + \varepsilon x^2 + (u^{11} - u) = 0,
\quad (x,y,u) \mapsto (x,y,u+1),
\]
which is the case in this example, 
or a nontrivial torsor (of order $11$) of such an elliptic surface.)
\end{example}

\subsection{Symplectic examples of finite order} \label{subsec:ker-p-s}

In this section we give, for each prime $2 \leq p \leq 7$, 
an example of a K3 surface $X = X_p$ defined over $K = \bQ_p(\zeta_p)$
and equipped with a symplectic automorphism $\sigma$ of order $p$
which specializes to identity.
Moreover our $X_p$ admits a projective smooth model (over some finite extension) for $p = 5,7$.

Again, these examples may be considered as $\mu_p$-actions on RDP K3 surfaces in characteristic $p$
(see \cite{Matsumoto:k3mun}).

We denote by $\mu_m$ the group of $m$-th roots of $1$ 
and $\zeta_m$ a primitive $m$-th root of $1$ (in the algebraic closure of a field of characteristic $0$).

Case $p = 7$.
Let $X$ be the double sextic K3 surface defined by 
\[
 w^2 + x_1^5 x_{2} + x_{2}^5 x_{3} + x_{3}^5 x_1 = 0.
\]
We have $f \colon \mu_{126}/\mu_3 \injto \Aut(X)$ by $f(t) \colon (w,x_i) \mapsto (w, t^{(-5)^i} x_i)$ for $t \in \mu_{126}$.
Since $f(t)^*$ acts on $H^0(X, \Omega^2_X)$ by $t^{21}$,
we have $f \colon \mu_{21}/\mu_3 \injto \Autsymp(X)$,
where $\Autsymp$ is the group of symplectic automorphisms.
The existence of a symplectic automorphism of order $7$ implies $\rho \geq 19$ (Corollary \ref{cor:Picardnumber}) 
where $\rho$ is the geometric Picard number of $X$.
The existence of an automorphism acting on $H^0(\Omega^2_X)$ by order $3$ 
implies $2 \divides (22 - \rho)$ (since $\bQ(\mu_3)$ acts on $T(X) \otimes \bQ$). 
Hence $\rho = 20$.
It is proved in \cite{Matsumoto:SIP}*{Corollary 0.5} that a K3 surface with $\rho = 20$
admits a projective smooth model after extending $K$ if $p \geq 5$
(projectivity is not explicitly mentioned but follows from the proof).

We observe that the above equation defines a proper RDP model of $X$ 
(the special fiber has 3 RDPs of type $A_6$ at $(w,x_1,x_{2},x_{3}) = (0,1,1,4)$, $(0,1,4,1)$, $(0,4,1,1)$).
So we can compute $\spen(f(\zeta_7))$ using this model, and it is trivial.

Case $p = 5$.
Let $X$ be the quartic K3 surface defined by 
\[
 x_1^3 x_{2} + x_{2}^3 x_{3} + x_{3}^3 x_{4} + x_{4}^3 x_1 = 0.
\]
We have $f \colon \mu_{80}/\mu_4 \injto \Aut(X)$ by 
$f(t) \colon (x_i) \mapsto (t^{(-3)^{i}} x_i)$ for $t \in \mu_{80}$.
Since $f(t)^*$ acts on $H^0(X, \Omega^2_X)$ by $t^{-20}$,
we have $f \colon \mu_{20}/\mu_4 \injto \Autsymp(X)$.
The above equation again defines a proper RDP model 
(the special fiber has 4 RDPs of type $A_4$ at 
$(x_1,x_{2},x_3,x_{4}) = (1,-2a^3,2a^2,a)$ for each primitive $8$-th root $a$ of $1$).

It remains to show $\rho = 20$.
We have another symplectic automorphism $\tau \colon (x_i) \to (\zeta_{40}^{i} x_{i+1})$.
Applying Corollary \ref{cor:Picardnumber} to the group generated by $f(\mu_{20}/\mu_4)$ and $\tau$
(which has $1,5,10,4$ elements of order $1,2,4,5$ respectively) we obtain $\rho \geq 19$.
The existence of an automorphism acting on $H^0(\Omega^2_X)$ by order $4$ 
(e.g.\ $f(\zeta_{80})$)
implies $2 \divides (22 - \rho)$ (since $\bQ(\mu_4)$ acts on $T(X) \otimes \bQ$). 
Another proof of $\rho = 20$
is by finding $20$ independent lines among the $52$ lines given in Section \ref{sec:3}.

Case $p = 3$.
Let $X$ be the double sextic K3 surface over $K$ defined by 
\[
 w^2 + x_0^6 + x_1^6 + x_2^6 + x_0^2 x_1^2 x_2^2 = 0.
\]
Define $g \in \Autsymp(X)$ by $g \colon (w, x_0, x_1, x_2) \mapsto (w, x_0, \zeta_{3} x_1, \zeta_{3}^2 x_2)$.
The above equation defines a proper RDP model
(the special fiber has 6 RDPs of type $A_2$ at 
$(w = x_0 x_1 x_2 = x_0^2 + x_1^2 + x_2^2 = 0)$).

Case $p = 2$.
Let $X$ be the quartic K3 surface over $K$ defined by 
\[
 w^3 x + w x^3 + y^3 z + y z^3 + wxyz = 0.
\]
Define $g \in \Autsymp(X)$ by $g \colon (w,x,y,z) \mapsto (w,x,-y,-z)$.
The above equation defines a proper RDP model
(the special fiber has 4 RDPs of type $A_3$ at 
$(w,x,y,z) = (0,1,1,1),(1,0,1,1),(1,1,0,1),(1,1,1,0)$).

\subsection{Examples of infinite order} \label{subsec:ker-infinite}

In this section we give examples, in all residue characteristic $p \geq 2$,
of automorphisms of infinite orders that specializes to the identity,

Consider a K3 surface $X$ equipped with an elliptic fibration $X \to \bP^1$,
and a non-torsion section $Z \subset X$ of the fibration.
Assume $X$ admits a projective RDP model with an elliptic fibration $\cX \to \bP^1_{\cO_K}$
and that the specialization of $Z$ is the zero section plus some fibral components.
Then the translation $\phi \colon X \to X$ by $Z$ specializes to the identity on $\cX_0$.
It is known that translation on an elliptic K3 surface is symplectic \cite{Huybrechts:lecturesK3}*{Lemma 16.4.4}.

Now we give an explicit example.
Let $X$ be the elliptic K3 surface defined by the equation $-y^2 - xy + x^3 - p^{12} x + t^6(t^6+p^6) = 0$.
Let $Z$ be the section defined by $(x, y) = (t^6(t^6+p^6)p^{-12}, t^{12}(t^6+p^6)p^{-18})$.
The singularity of the special fiber of $X$ is as follows.
An $A_{11}$ at $(x = y = t = 0)$ for any $p$.
If $p = 3$, an $E_6$ at $(x',y',s) = (-1,0,0)$. 
If $p = 2$, an $D_7$ at $(x',y',s) = (0,-1,0)$.

$\phi$ has infinite order since its restriction to the fiber $(t = 1)$, which is a smooth elliptic curve over $\bQ$, has infinite order
by a Lutz-Nagell type result (\cite{Silverman:AEC}*{Theorem VII.3.4}).
Then,  for any $m \geq 1$, $\phi^m$ is not extendable
since $\phi^m \neq \id$ and $\spen(\phi^m) = \id$.

Next let $\sigma$ be the automorphism $(x,y,t) \mapsto (x,y, \zeta_6 t)$.
Then the composite $\phi \sigma$ is not extendable since its power $(\phi \sigma)^6 = \phi^6$ is not extendable, 
and $\phi \sigma$ is non-symplectic since $\phi$ is symplectic and $\sigma$ is not.

Similar example would exist also in equal characteristic $0$.
Also, Oguiso \cite{Oguiso:localfamilies}*{Theorem 1.5(2)} gave an example of $1$-dimensional family $\{ X_t \}_{t \in \Delta}$ of complex K3 surfaces 
with $\Aut(X_t)$ are infinite for $t$ outside a countable subset of $\Delta$, but $\Aut(X_0)$ is finite.

\section{An example in characteristic $3$} \label{sec:3}

In this section we give an example of a K3 surface $X_K$ over $K = \bQ_{3^4} = \bQ_3(\zeta_{80})$ equipped with 
an automorphism $g_K$ defined over $K$
such that
the characteristic polynomial of $\spen(g_K)$ is irreducible.
By 
Corollary \ref{cor:criterion for non-ext}(\ref{item:specialization is irreducible})
this gives another example 
of Theorem \ref{thm:non-extendable} for $G = \bZ$, $p = 3$.
Apart from the non-extendability, the existence of $g_K$ with the characteristic polynomial of $\spen(g_K)^*$ being irreducible 
would be itself interesting.
The proof of irreducibility, however, requires hard computations.

Let $X_k$ be the Fermat quartic $(F = w^4 + x^4 + y^4 + z^4 = 0)$ in $\bP^3_k$ over $k = \bF_{3^4}$.
(This is the (unique) supersingular K3 surface with Artin invariant $1$ in characteristic $3$, but we do not need this fact.)
Kondo--Shimada determined the lines on $X_k$ and their explicit equations
and showed that $\NS(X_{\overline k}) = \NS(X_k)$ is generated by those lines.
We use their notation $l_1, \ldots, l_{112}$ of \cite{Kondo--Shimada:3}\footnote{
Table 2 in the published version has errors (e.g.\ the formulas for $l_3$ and $l_5$ are the same).
Instead we refer to Table 3.1 in arXiv version ({\tt arXiv:1205.6520v2}).}.
 
Another coordinate 
$(u_1, u_4, u_2, u_3) = 
 (w, x, y, z) M^{-1}$,
 where $M$ is the matrix
\[
M = \begin{pmatrix}
\zeta^2-\zeta^3 & -1-\zeta^2 & -1+\zeta-\zeta^2 & \zeta-\zeta^4 \\
-\zeta^2+\zeta^3 & -1-\zeta^3 & -1-\zeta^3+\zeta^4 & -\zeta+\zeta^4 \\
\zeta^2-\zeta^4 & \zeta+\zeta^2 & -\zeta^2-\zeta^3+\zeta^4 & -1+\zeta+\zeta^3 \\
-\zeta+\zeta^3 & \zeta^3+\zeta^4 & \zeta-\zeta^2-\zeta^3 & 1-\zeta-\zeta^3 \\
\end{pmatrix},
\]
gives the equation $u_1^3 u_2 + u_2^3 u_4 + u_4^3 u_3 + u_3^3 u_1 = 0$.
Here $\zeta = \zeta_{5} \in \bF_{3^4}$ is a primitive $5$-th root of $1$ satisfying $i = -1 + \zeta + \zeta^{-1}$.
Let $X_K$ be the quartic K3 surface over $K = \bQ_{3^4}$ defined by this equation.

There are the following 52 lines $l^1_{(d,e)}$, $l^2_a$, $l^3$, $l^4$ on $X_{\overline K}$, all defined over $K = \bQ_{3^4}$:
\[l^1_{(d,e)} \colon u_1 + ed u_2 + d^3 u_3 =  u_4 - e^3 d^3 u_2 - d u_3 = 0 \] 
for each of the $40$ solutions $(d,e)$ of $e^5 = 1$ and $d^8 - 3 e^3 d^4 + e = 0$, 
\[ l^2_a \colon u_1 - a u_4 = u_2 + a^7 u_3 = 0 \]
for each of the $10$ solutions $a$ of $a^{10} = 1$,
and $l^3 \colon u_2 = u_3 = 0$ and $l^4 \colon u_1 = u_4 = 0$. 
We observe that there are no more.
We can calculate their specialization to $X_k$. 
For example, the line $u_1 - d'^9 u_2 + d'^3 u_3 =  u_4 + d'^{27} u_2 - d' u_3 = 0 $ on $X_k$,
where $d' \in k$ is an $80$-th root of $1$, 
is the specialization of some $l^1_{(d,e)}$ if and only if $d'^{40} = -1$.
By explicit calculation (omitted) we observe that $l_i$ comes from a line on $X_K$ if and only if $i \in I$, where 
\begin{gather*}
 I = \{ 
1, 2, 3, 4, 5, 9, 10, 13, 15, 18, 
20, 21, 22, 23, 24, 25, 26, 30, 33, 36, \\
37, 40, 41, 44, 45, 48, 51, 52, 57, 63, 
65, 66, 67, 68, 70, 72, 74, 75, 78, 82, \\
86, 93, 98, 101, 102, 103, 104, 106, 109, 110, 
111, 112
 \}.
\end{gather*}

Define divisor classes $D_1$ and $D_2$ on $X_k$ by
\begin{align*}
D_1 &= 3h - (l_{21} + l_{22} + l_{63} + l_{65} + l_{50} + l_{88}), \\
D_2 &= 2h - (l_{65} + l_{66} + l_{70}),
\end{align*}
where $h$ denotes the hyperplane class (with respect to the embedding in $\bP^3$).
Since $l_{50} + l_{88} = h - l_{5} - l_{112}$ (since the hyperplane section $(w + (-1-i)x + i y + (1-i)z = 0)$ is equal to the sum of these $4$ lines),
the classes $D_i$ come from the classes $D_{i,K}$ of $X_K$.

We note that $D_1$ is the class $m_1$ in \cite{Kondo--Shimada:3}.

We easily verify that $D_i$ are nef and that $D_i^2 = 2$, 
and hence $D_{i,K}$ have the same property.
Hence we obtain generically $2$-to-$1$ morphisms $\pi_i \colon X_k \to \bP^2_k$ and $\pi_{i,K} \colon X_K \to \bP^2_K$.
\begin{claim} \label{claim:exceptional}
(1)
The exceptional divisors of $\pi_1$ are
\[
(l_{10}, l_{18}), 
(l_{16}, l_{99}), 
(l_{29}, l_{49}), 
(l_{60}, l_{73}), 
(l_{23}), 
(l_{37}), 
(l_{62}), 
(l_{68}), 
(l_{102}), 
(l_{112}), 
\]
and those of $\pi_2$ are
\[
(l_{67}, l_{68}), 
(l_{90}, l_{94}), 
(l_{49}), 
(l_{54}), 
(l_{60}), 
(l_{63}), 
(l_{69}), 
(l_{97}), 
(l_{102}), 
(l_{107}), 
(l_{112}), 
\]
where the parentheses denote connected components.

(2)
The exceptional divisors of $\pi_{1,K}$ are
\[
(\tilde l_{10}, \tilde l_{18}), 
(C_{16,99}), 
(\tilde l_{23}), 
(\tilde l_{37}), 
(\tilde l_{68}), 
(\tilde l_{102}), 
(\tilde l_{112}), 
\]
and those of $\pi_{2,K}$ are
\[
(\tilde l_{67}, \tilde l_{68}), 
(C_{90,94}), 
(\tilde l_{63}), 
(\tilde l_{102}), 
(\tilde l_{112}), 
\]
where $\tilde l_i$ is the (unique) line on $X_K$ specializing to $l_i$
and $C_{i,j}$ is the (unique) rational curve on $X_K$ specializing to $l_i + l_j$.
\end{claim}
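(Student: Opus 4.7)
The plan is to reduce each statement to an explicit computation of intersection numbers on the Fermat quartic $X_k$, using the data of Kondo--Shimada, and then lift the analysis to the generic fiber $X_K$ by tracking which classes in $\NS(X_k)$ come from divisor classes on $X_K$.

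First I would address part (1). Since $D_i$ is nef and big with $D_i^2 = 2$, the morphism $\pi_i$ contracts precisely those irreducible curves $C$ with $D_i \cdot C = 0$; on a K3 surface these are $(-2)$-curves, and their connected components form ADE configurations corresponding to the RDP singularities on the image. Since $\NS(X_k)$ is generated by the $112$ lines $l_1,\dots,l_{112}$, I would compute $D_i \cdot l_j$ for all $j$ using the intersection matrix of \cite{Kondo--Shimada:3} and verify that $D_i \cdot l_j = 0$ exactly for the indices appearing in the listed components. The mutual intersections $l_j \cdot l_{j'}$ then pin down the connected-component structure (each pair becomes an $A_2$ and each singleton an $A_1$). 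A small root-theoretic inspection of the orthogonal lattice $D_i^\perp \subset \NS(X_k)$, based on its discriminant and the classification of its root sublattice, rules out contracted $(-2)$-curves outside the $112$ listed lines.

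For part (2) I would use the injective specialization map $\NS(X_K) \hookrightarrow \NS(X_k)$. Any irreducible curve $C \subset X_K$ contracted by $\pi_{i,K}$ specializes to an effective divisor supported on the exceptional locus of $\pi_i$ whose class lies in the image of $\NS(X_K)$. For a line $l_j$ with $j \in I$, its unique lift $\tilde l_j$ is exceptional for $\pi_{i,K}$ iff $l_j$ is exceptional for $\pi_i$; this accounts for all the singleton components $(\tilde l_j)$ in the lifted lists. For the remaining exceptional components of $\pi_i$ on $X_k$ (those containing at least one line outside $I$), I would examine case by case whether the relevant class $[l_j]+[l_{j'}]$ (or $[l_j]$ alone) lies in the image of $\NS(X_K)$ and, if so, whether it is represented by a single irreducible smooth rational curve. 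For the two pairs $(l_{16},l_{99})$ and $(l_{90},l_{94})$ I would exhibit the curves $C_{16,99}$ and $C_{90,94}$ explicitly---e.g.~as irreducible conics cut out on $X_K$ by an appropriate plane---and verify the specialization to $l_{16}+l_{99}$, resp.~$l_{90}+l_{94}$, by reducing the defining equations modulo the maximal ideal. For the remaining components $(l_{29},l_{49})$, $(l_{60},l_{73})$, $(l_{62})$ in the $\pi_1$ list, and the analogous excluded components in the $\pi_2$ list, I would show that their classes are not in the image of $\NS(X_K)\to\NS(X_k)$, so that no curve on $X_K$ specializes to them.

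The principal obstacle will be the analysis in (2): determining exactly which exceptional classes on $X_k$ lift to $X_K$, and for those that do, exhibiting an irreducible curve with the prescribed specialization. The source of the asymmetry is that $\rank\NS(X_K)\le 20$ while $\rank\NS(X_k)=22$, so the specialization inclusion genuinely discards information, and its image must be pinned down---by using the $52$ explicit lines on $X_K$ above to generate a candidate sublattice and comparing its discriminant against a lattice-theoretic upper bound on $\NS(X_K)$. The remaining intersection-number computations on $X_k$, and the verification that $C_{16,99}$ and $C_{90,94}$ are smooth rational, are routine given the explicit Kondo--Shimada data and the explicit equations above.
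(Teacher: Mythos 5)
Your proposal is correct in outline, but it diverges from the paper at the one genuinely non-routine point, namely the completeness of the lists in part (1). Verifying that the listed lines are contracted is, as you say, a finite intersection-number computation from the Kondo--Shimada data, and both you and the paper do this. To rule out \emph{further} contracted curves (which need not be among the $112$ lines), you propose computing the root sublattice of the negative-definite rank-$21$ lattice $D_i^\perp\subset\NS(X_k)$ and matching its rank and type ($2A_2+9A_1$ for $\pi_2$, $4A_2+6A_1$ for $\pi_1$) against the exhibited configuration; since every $(-2)$-class is $\pm$effective and the irreducible contracted curves are the simple roots, this is a legitimate and standard finite computation. The paper instead constructs the double covers explicitly: it writes down bases of $H^0(\cO(D_i))$ and $H^0(\cO(3D_i))$, derives the branch sextic $Y^2=G(P,Q,R)$, identifies the images of the listed curves as its cusps and nodes, and then excludes further singular points by a Galois argument reducing the check to the $91$ points of $\bP^2(\bF_9)$. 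Your route avoids writing any equations but requires enumerating roots in a rank-$21$ Gram matrix; the paper's route is longer but also yields the explicit plane models used implicitly elsewhere. For part (2) your argument is essentially the paper's: an irreducible contracted curve on $X_K$ specializes to a connected, multiplicity-one effective sum of contracted curves on $X_k$, so one only needs to decide liftability of the classes $[l_j]$ and $[l_j]+[l_{j'}]$. The paper settles this more cheaply than your proposed discriminant comparison of $\NS(X_K)$ with the lattice spanned by the $52$ lines: it writes each relevant class as $h$ minus a sum of lines (e.g.\ $l_{16}+l_{99}=h-l_{57}-l_{75}$, $l_{29}+l_{49}=h-l_{41}-l_{77}$) and invokes only the already-established list $I$ of liftable lines, together with the observation that an effective class of $h$-degree $1$ (resp.\ $2$) must be a line (resp.\ a conic or a pair of lines). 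You should make sure your version of this step is reduced to the line data in the same way; determining the full image of $\NS(X_K)\to\NS(X_k)$ with its discriminant is more than is needed and is the least pinned-down part of your plan.
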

We prove this later (in a brutal way). For $\pi_1$ this is already showed in \cite{Kondo--Shimada:3} but we give another proof. 

Let $\tau_i$ be the involutions on $X_k$ induced by the deck transformations of $\pi_i$.
Note that $\tau_i$ are the specializations of the involutions $\tau_{i,K}$ on $X_K$ defined by the classes $D_{i,K}$.
Using the previous claim
we can compute the $+1$-parts of $\tau_{i,K}^*$ and $\tau_{i}^*$ on $\Het^2$:
the $+1$-part is freely generated by the pull-back of $\cO_{\bP^2}(1)$ 
and the classes of connected components of the exceptional divisor
(provided these components are all $A_1$ or $A_2$).
By Proposition \ref{prop:criterion for non-ext},
$\tau_{i,K}$ are not extendable to proper smooth models.

We need one more automorphism.
Let $\sigma$ and $\sigma_K$ be the diagonal linear transformations 
$(u_1, u_4, u_2, u_3) \mapsto (u_1, - u_4, i u_2, -i u_3)$ on $X_k$ and $X_K$.
(We also have a more symmetric formula $(u_1, u_4, u_2, u_3) \mapsto (\zeta_{16} u_1, \zeta_{16}^{9} u_4, \zeta_{16}^{-3} u_2, \zeta_{16}^{-27} u_3)$,
where $\zeta_{16} = -1 + \zeta + \zeta^3$ is a $4$-th root of $-i$.)
(A linear automorphism diagonalized by this kind of basis also appears in \cite{Kondo--Shimada:3}*{Example 3.4}.)

Now let $g = \sigma \tau_2 \tau_1 \tau_2$.
Clearly $g$ is the specialization of $g_K = \sigma_K \tau_{2,K} \tau_{1,K} \tau_{2,K}$.

\begin{claim}
The characteristic polynomial of $g^*$ on $\Het^2(X_{\overline k}, \bQ_l)$ is equal to $F(x) = $
\begin{gather*}
x^{22} - 4x^{21} + 2x^{20} - 3x^{18} + 4x^{17} - 5x^{16} + x^{15} + x^{14} - 2x^{13} + 2x^{12} - 3x^{11} \\
+ 2x^{10} - 2x^9 + x^8 + x^7 - 5x^6 + 4x^5 - 3x^4 + 2x^2 - 4x + 1
\end{gather*}
and is irreducible.
\end{claim}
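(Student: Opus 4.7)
The plan is to reduce the statement to a finite linear-algebra computation on the N\'eron--Severi lattice. Since $X_k$ is the supersingular K3 surface of Artin invariant $1$ in characteristic $3$, its geometric Picard number equals $22$; the cycle class map $\NS(X_{\overline k}) \otimes \bQ_l \isomto \Het^2(X_{\overline k}, \bQ_l)(1)$ is then an isomorphism of $\Aut(X_k)$-modules, and the Tate twist does not affect characteristic polynomials (it alters only the Galois action, which commutes with $g^*$). Hence it suffices to compute the characteristic polynomial of $g^*$ on the rank-$22$ $\bQ$-vector space $\NS(X_k) \otimes \bQ$.

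I would first fix an ordered basis of $\NS(X_k) \otimes \bQ$ consisting of $22$ of the $112$ lines $l_i$ (which generate $\NS$ by Kondo--Shimada), and compute the corresponding Gram matrix of intersection numbers directly from the explicit equations. Every other relevant class---the remaining $90$ lines, the divisors $D_1, D_2$, and the exceptional components named in Claim~\ref{claim:exceptional}---can then be expressed in this basis by solving linear equations against the Gram matrix.

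The matrices of $\sigma^*$, $\tau_1^*$, $\tau_2^*$ are then obtained as follows. Since $\sigma$ acts linearly on $\bP^3$, it permutes the $112$ lines, and this permutation is read off directly from the coordinate change; expressing the images of the $22$ basis lines in the basis yields the matrix of $\sigma^*$. For each involution $\tau_i^*$, I would use the formula
\[
\tau_i^*(c) \;=\; 2 \cdot \mathrm{proj}_{V_+^{(i)}}(c) - c,
\]
where $V_+^{(i)}$ is the $+1$-eigenspace of $\tau_i^*$: by Claim~\ref{claim:exceptional}, it is spanned by $D_i$ together with, for each connected component of the exceptional divisor of $\pi_i$, the sum of its constituent curves (each $A_1$-singleton contributes one generator and each $A_2$-pair $(l_a,l_b)$, swapped by $\tau_i$, contributes $l_a + l_b$). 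The projection is computed by inverting the Gram matrix restricted to $V_+^{(i)}$. Composing gives $g^* = \tau_2^* \tau_1^* \tau_2^* \sigma^*$, whose characteristic polynomial should evaluate to $F(x)$.

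Irreducibility of $F(x)$ over $\bQ$ is checked separately: since the eigenvalues of any automorphism on $H^2$ of a K3 surface come in reciprocal pairs $\alpha,\alpha^{-1}$, the polynomial $F$ is palindromic and may be written $F(x) = x^{11} G(x + x^{-1})$ with $G \in \bZ[t]$ of degree $11$; then $F$ is irreducible over $\bQ$ if and only if $G$ is. Irreducibility of $G$ is verified by reducing modulo a small prime and checking that the reduction is irreducible (or, equivalently, by ruling out factors of degree $\le 5$). The principal obstacle is not any conceptually hard step but the sheer volume of bookkeeping: assembling the $22 \times 22$ Gram matrix, tracking the orbit of each basis line under $\sigma$, and correctly identifying the $\tau_i$-action on each exceptional configuration from Claim~\ref{claim:exceptional}.
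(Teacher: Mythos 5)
Your computation of the characteristic polynomial itself is sound and is essentially the paper's argument: both reduce to $\NS(X_k)\otimes\bQ$ via supersingularity, both read off $\sigma^*$ from the permutation of lines, and both determine $\tau_i^*$ from the knowledge that its $+1$-eigenspace is spanned by $\pi_i^*\cO_{\bP^2}(1)$ and the classes of the connected components of the exceptional locus from Claim~\ref{claim:exceptional}. The only organizational difference is that you package the involutions as reflections $2P_+-\mathrm{id}$ computed with the Gram matrix, whereas the paper writes down explicit bases $\beta_1,\beta_2$ adapted to each $\tau_i$ (invariant classes plus differences $l_a-l_b$ of swapped lines) in which $T_i'$ is diagonal, and then changes basis using the intersection form. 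These are the same computation.

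There is, however, a genuine gap in your irreducibility argument. You assert that $F(x)=x^{11}G(x+x^{-1})$ is irreducible over $\bQ$ if and only if $G$ is. Only one direction holds: if $G$ factors then so does $F$, but the converse fails (compare $F(x)=x^2-2x+1=(x-1)^2$ with $G(t)=t-2$). Concretely, if $\lambda$ is a root of $F$ and $\mu=\lambda+\lambda^{-1}$, then $[\bQ(\lambda):\bQ(\mu)]\in\{1,2\}$, so even with $G$ irreducible of degree $11$ the polynomial $F$ could a priori split as $F_1F_1^*$ with $F_1$ irreducible of degree $11$ and $F_1^*$ its reciprocal. You must exclude this case. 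The cleanest patch is the one the paper offers via Lemma~\ref{lem:Salem}: the characteristic polynomial of $g^*$ has at most one non-cyclotomic irreducible factor, and an irreducible degree-$11$ factor cannot be cyclotomic (cyclotomic polynomials of degree $>1$ have even degree) nor self-reciprocal (a self-reciprocal polynomial of odd degree is divisible by $x+1$), so the factorization $F_1F_1^*$ would produce two distinct non-cyclotomic factors, a contradiction; one then only needs to check that $F$ is coprime to the cyclotomic polynomials of degree $\le 22$. Alternatively, a direct factorization of $F$ modulo $2$ and modulo $3$, as the paper suggests, settles the matter without any structural input. Note also that the simple test $F(1)=-9$ being (up to sign) a perfect square does not rule out the degree-$11$ splitting, so some such additional argument really is needed.
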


\begin{proof}
We first prove irreducibility of this polynomial $F$.
We have several ways.
(1) We can ask a mathematical software (e.g.\ {\tt SageMath}).
(2) The irreducible decompositions of $F \bmod 2$ and $F \bmod 3$ imply irreducibility (we omit the details).
(3) Assuming that $F$ is the characteristic polynomial of $g^*$ on $\Het^2$
(and hence of $g^*$ on $\NS(X_{\overline k})$),
it has at most one non-cyclotomic irreducible factor by the following lemma. 
So it suffices to check $F$ is prime to any cyclotomic polynomial of degree $\leq 22$ (we omit the verification).

\begin{lem}[\cite{McMullen:dynamicsK3}*{Corollary 3.3}] \label{lem:Salem}
Let $f$ be an isometry of a lattice $L$ (over $\bZ$) of signature $(+1, -(r-1))$ 
and assume $f$ preserves a connected component of $\{ x \in L  \mid x^2 > 0 \} $.
Then the characteristic polynomial of $f$ has at most one non-cyclotomic irreducible factor.
Moreover that factor (if exists) is a Salem polynomial, that is, 
an irreducible monic integral polynomial that has exactly two real roots, $\lambda > 1$ and $\lambda^{-1}$, 
and the other roots (if any) lie on the unit circle.
\end{lem}

Since $\Het^2(X_{\overline k}, \bQ_l)$ is generated by algebraic cycles (defined over $k$), 
it suffices to compute the action on $\NS(X_k) \otimes \bQ$.

The transformation matrix of $\tau_1$ with respect to the basis $\beta_1 = $
\begin{gather*}
 \{
l_{23},
l_{37},
l_{62},
l_{68},
l_{102},
l_{112},
l_{10} + l_{18},
l_{16} + l_{99},
l_{29} + l_{49},
l_{60} + l_{73},
D_1, \\
l_{10} - l_{18},
l_{16} - l_{99},
l_{29} - l_{49},
l_{60} - l_{73},
l_{2} - l_{33},
l_{4} - l_{11},
l_{5} - l_{24},
l_{7} - l_{85}, \\
l_{13} - l_{67},
l_{30} - l_{87},
2 l_{3}+l_{112} - (l_{10} + l_{18} + l_{16} + l_{99} + l_{90} + l_{94})
\} 
\end{gather*}
is $T'_1 = \diag(\underbrace{1,\ldots, 1}_{11},\underbrace{-1,\ldots, -1}_{11})$.

The transformation matrix of $\tau_2$ with respect to the basis $\beta_2 = $
\begin{gather*}
 \{
l_{67} + l_{68},
l_{90} + l_{94},
l_{49},
l_{54},
l_{60},
l_{63},
l_{69},
l_{97},
l_{102},
l_{107},
l_{112},
D_2, \\
l_{67} - l_{68},
l_{90} - l_{94},
l_{45} - l_{82},
l_{24} - l_{75},
l_{36} - l_{79},
l_{30} - l_{81}, \\
l_{39} - l_{76},
l_{25} - l_{86},
l_{42} - l_{85},
l_{10} - l_{18}
\}
\end{gather*}
is $T'_2 = \diag(\underbrace{1,\ldots, 1}_{12},\underbrace{-1,\ldots, -1}_{10})$.

The transformation matrix of $\sigma$ with respect to the basis $\beta_3 = $
\[ \{
l_{7}, l_{107}, l_{95}, l_{14}, l_{83}, l_{92}, l_{43}, l_{69}, l_{34}, l_{56}, l_{11}, l_{59}, l_{80}, l_{16}, l_{50}, l_{85}, l_{100}, l_{61}, l_{27}, l_{29}, l_{15}, l_{20}
\} \] 
is the $5$-th power of the matrix
\[
R = \begin{pmatrix}
      &   &   & 1  &  &  \\
1     &   &  &   &  &  \\
      & \ddots  &   &  &  &  \\
      &  & 1 &  &  &  \\
      &  &  &  & 1 &  \\
      &  &  &  &  & 1 \\
\end{pmatrix}.
\]
(More precisely, $\sigma$ is the $5$-th power of the linear automorphism 
$\rho \colon (u_1, u_4, u_2, u_3) \mapsto (\zeta_{80} u_1, \zeta_{80}^{9} u_4, \zeta_{80}^{-3} u_2, \zeta_{80}^{-27} u_3)$,
where $\zeta_{80} =  \zeta - \zeta^3$ satisfies $\zeta_{80}^5 = \zeta_{16}$,
and $\rho$ acts on $\beta_3$ by $R$.)

From these information we can compute the action and the characteristic polynomial.
Define $\psi \colon \NS(X_k) \otimes \bQ \isomto \bQ^{22}$
to be the isomorphism defined by $ \psi(v) = (v \cdot l)_{l \in \beta_3}$.
Let $B_i$ be the matrices consisting of column vectors $\psi(v)$ ($v \in \beta_i$).
Then $T_i = (B_i^{-1} B_3)^{-1} T'_i (B_i^{-1} B_3)$ (for $i = 1,2$) 
are the transformation matrices of $\tau_i$ with respect to the basis $\beta_3$.
It remains to check that the characteristic polynomial of
$ R^5 T_2 T_1 T_2  $
is equal to $F$ (omitted).
We write down the $B_i$ for convenience.
\[ B_1 = \left(
\begin{smallmatrix}
0 & 1 & 1 & 0 & 0 & 0 & 0 & 0 & 0 & 1 & 1 & 0 & 0 & 0 & 1 & 0 & -1 & 1 & -2 & 0 & 1 & 1 \\ 
1 & 0 & 1 & 0 & 0 & 0 & 0 & 1 & 0 & 0 & 1 & 0 & -1 & 0 & 0 & 1 & 0 & 0 & 0 & 0 & 1 & -1 \\ 
0 & 0 & 0 & 0 & 0 & 0 & 0 & 0 & 1 & 1 & 1 & 0 & 0 & -1 & 1 & 1 & 0 & 1 & 0 & -1 & 0 & -1 \\ 
1 & 0 & 1 & 0 & 0 & 1 & 1 & 1 & 1 & 2 & 3 & 1 & 1 & -1 & 0 & 0 & 0 & 0 & 0 & 0 & 1 & -2 \\ 
1 & 0 & 0 & 0 & 1 & 0 & 0 & 0 & 0 & 1 & 1 & 0 & 0 & 0 & 1 & 0 & 0 & 0 & 0 & -1 & -1 & -1 \\ 
0 & 0 & 0 & 1 & 0 & 0 & 0 & 1 & 1 & 1 & 2 & 0 & -1 & -1 & 1 & 0 & 0 & 0 & -1 & 0 & 1 & -2 \\ 
0 & 0 & 0 & 0 & 0 & 0 & 1 & 0 & 0 & 1 & 1 & -1 & 0 & 0 & -1 & 0 & 0 & 0 & 0 & -1 & 0 & -1 \\ 
0 & 0 & 0 & 0 & 0 & 0 & 0 & 1 & 0 & 1 & 1 & 0 & -1 & 0 & -1 & 1 & 0 & 1 & 0 & 0 & -1 & -1 \\ 
0 & 0 & 1 & 0 & 0 & 0 & 1 & 1 & 0 & 1 & 2 & 1 & -1 & 0 & 1 & -1 & 1 & 1 & 0 & -1 & -1 & -2 \\ 
0 & 1 & 0 & 0 & 0 & 1 & 1 & 1 & 1 & 1 & 2 & 1 & -1 & 1 & 1 & 0 & 0 & 0 & 0 & 0 & 0 & -1 \\ 
1 & 0 & 0 & 1 & 0 & 0 & 1 & 1 & 1 & 1 & 2 & 1 & -1 & -1 & -1 & -1 & 2 & 0 & 1 & 0 & 0 & 0 \\ 
0 & 0 & 0 & 1 & 0 & 1 & 0 & 0 & 0 & 1 & 1 & 0 & 0 & 0 & 1 & -1 & 0 & 0 & 0 & 0 & -1 & 0 \\ 
0 & 0 & 1 & 1 & 0 & 0 & 1 & 0 & 0 & 0 & 1 & 1 & 0 & 0 & 0 & 0 & 0 & 0 & 0 & 0 & 0 & -1 \\ 
0 & 0 & 0 & 0 & 0 & 0 & 0 & -1 & 0 & 0 & 0 & 0 & -3 & 0 & 0 & 0 & 1 & 0 & 0 & 1 & 0 & 0 \\ 
0 & 1 & 1 & 1 & 1 & 1 & 1 & 1 & 1 & 1 & 3 & 1 & -1 & -1 & -1 & 0 & 1 & 1 & 0 & 0 & 1 & -2 \\ 
0 & 1 & 1 & 0 & 0 & 0 & 0 & 0 & 0 & 1 & 1 & 0 & 0 & 0 & -1 & 0 & 1 & -1 & 2 & 0 & -1 & -1 \\ 
1 & 1 & 1 & 1 & 0 & 1 & 1 & 1 & 1 & 1 & 3 & -1 & -1 & -1 & 1 & 1 & 0 & 0 & 0 & 0 & -1 & -2 \\ 
0 & 0 & 1 & 0 & 0 & 1 & 0 & 0 & 1 & 0 & 1 & 0 & 0 & -1 & 0 & -1 & 1 & 0 & 0 & -1 & 0 & 0 \\ 
1 & 1 & 0 & 0 & 0 & 0 & 0 & 0 & 1 & 0 & 1 & 0 & 0 & -1 & 0 & 0 & 0 & 0 & 0 & 0 & 0 & -1 \\ 
0 & 0 & 0 & 0 & 0 & 0 & 0 & 0 & -1 & 0 & 0 & 0 & 0 & -3 & 0 & 0 & 1 & 0 & 0 & 0 & 1 & 0 \\ 
0 & 0 & 0 & 0 & 1 & 0 & 1 & 1 & 1 & 0 & 2 & -1 & 1 & 1 & 0 & 0 & -1 & -1 & 0 & 1 & 0 & 0 \\ 
0 & 0 & 1 & 0 & 1 & 0 & 1 & 1 & 1 & 1 & 2 & -1 & -1 & -1 & 1 & 0 & 0 & -1 & 0 & 1 & 0 & 0 \\ 
\end{smallmatrix}
\right), \]
\[ B_2 = \left(
\begin{smallmatrix}
0 & 1 & 0 & 0 & 1 & 0 & 0 & 0 & 0 & 0 & 0 & 1 & 0 & 1 & 0 & -1 & 0 & 1 & 0 & -1 & 0 & 0 \\ 
0 & 0 & 0 & 0 & 0 & 0 & 0 & 0 & 0 & -2 & 0 & 0 & 0 & 0 & 0 & 0 & 0 & 0 & 0 & 0 & 0 & 0 \\ 
1 & 1 & 1 & 1 & 1 & 1 & 0 & 0 & 0 & 0 & 0 & 2 & 1 & -1 & 1 & -1 & 1 & -1 & 1 & -1 & 1 & 0 \\ 
1 & 1 & 1 & 0 & 1 & 0 & 0 & 1 & 0 & 0 & 1 & 2 & 1 & -1 & 0 & 0 & -1 & 1 & 1 & -1 & 0 & 1 \\ 
1 & 1 & 0 & 0 & 1 & 1 & 0 & 1 & 1 & 0 & 0 & 2 & 1 & 1 & -1 & -1 & -1 & -1 & 0 & 0 & 0 & 0 \\ 
1 & 1 & 1 & 1 & 1 & 1 & 0 & 0 & 0 & 0 & 0 & 2 & -1 & 1 & -1 & 1 & -1 & 1 & -1 & 1 & -1 & 0 \\ 
1 & 0 & 0 & 1 & 0 & 0 & 0 & 0 & 0 & 0 & 0 & 1 & 1 & 0 & -1 & 0 & -1 & 0 & 1 & 0 & 1 & -1 \\ 
0 & 0 & 0 & 0 & 0 & 0 & -2 & 0 & 0 & 0 & 0 & 0 & 0 & 0 & 0 & 0 & 0 & 0 & 0 & 0 & 0 & 0 \\ 
1 & 0 & 0 & 0 & 1 & 0 & 0 & 0 & 0 & 0 & 0 & 1 & 1 & 0 & -1 & 0 & 1 & 0 & -1 & 0 & 1 & 1 \\ 
0 & 0 & 0 & 1 & 1 & 0 & 0 & 0 & 0 & 0 & 1 & 1 & 0 & 0 & 0 & 0 & -1 & -1 & -1 & -1 & 1 & 1 \\ 
1 & 0 & 1 & 0 & 0 & 0 & 0 & 0 & 0 & 0 & 0 & 1 & -1 & 0 & -1 & 0 & -1 & 0 & 1 & 0 & 1 & 1 \\ 
1 & 1 & 0 & 0 & 1 & 1 & 0 & 0 & 0 & 1 & 1 & 2 & -1 & -1 & 0 & 0 & 0 & 0 & 1 & 1 & 1 & 0 \\ 
1 & 0 & 0 & 0 & 0 & 0 & 0 & 1 & 0 & 0 & 0 & 1 & -1 & 0 & 1 & 0 & -1 & 0 & -1 & 0 & 0 & 1 \\ 
0 & 1 & 0 & 1 & 0 & 0 & 0 & 0 & 0 & 0 & 0 & 1 & 0 & 1 & 0 & -1 & 0 & -1 & 0 & 1 & 0 & 0 \\ 
1 & 1 & 1 & 1 & 0 & 0 & 0 & 0 & 1 & 0 & 1 & 2 & -1 & -1 & 0 & 0 & 1 & 1 & 0 & 0 & 1 & 1 \\ 
0 & 1 & 0 & 0 & 0 & 0 & 0 & 1 & 0 & 0 & 0 & 1 & 0 & -1 & 0 & 1 & 0 & -1 & 0 & -1 & 3 & 0 \\ 
1 & 1 & 1 & 0 & 1 & 0 & 0 & 1 & 0 & 0 & 1 & 2 & -1 & 1 & 0 & 0 & 1 & -1 & -1 & 1 & 0 & -1 \\ 
1 & 1 & 1 & 0 & 0 & 1 & 1 & 0 & 0 & 0 & 1 & 2 & 1 & 1 & 0 & 0 & -1 & -1 & -1 & -1 & 1 & 0 \\ 
0 & 1 & 1 & 0 & 0 & 0 & 0 & 0 & 0 & 0 & 0 & 1 & 0 & -1 & 0 & -1 & 0 & -1 & 0 & 1 & 0 & 0 \\ 
0 & 0 & 1 & 0 & 0 & 1 & 0 & 1 & 0 & 0 & 0 & 1 & 0 & 0 & -1 & -1 & 1 & 1 & 1 & 1 & 0 & 0 \\ 
0 & 0 & 0 & 0 & 0 & 0 & 1 & 0 & 1 & 1 & 0 & 1 & 0 & 0 & 1 & 1 & 0 & 0 & 0 & 0 & -1 & -1 \\ 
0 & 0 & 1 & 0 & 1 & 0 & 0 & 0 & 1 & 0 & 0 & 1 & 0 & 0 & 1 & 1 & -1 & -1 & 0 & 0 & 1 & -1 \\ 
\end{smallmatrix}
\right), \]
\[B_3 = \left(
\begin{smallmatrix}
-2 & 0 & 0 & 0 & 0 & 0 & 0 & 0 & 0 & 0 & 1 & 0 & 0 & 0 & 0 & 0 & 0 & 0 & 0 & 0 & 1 & 0 \\ 
0 & -2 & 0 & 0 & 0 & 0 & 0 & 0 & 0 & 0 & 0 & 1 & 0 & 0 & 0 & 0 & 0 & 0 & 0 & 0 & 1 & 0 \\ 
0 & 0 & -2 & 0 & 0 & 0 & 0 & 0 & 0 & 0 & 0 & 0 & 1 & 0 & 0 & 0 & 0 & 0 & 0 & 0 & 1 & 0 \\ 
0 & 0 & 0 & -2 & 0 & 0 & 0 & 0 & 0 & 0 & 0 & 0 & 0 & 1 & 0 & 0 & 0 & 0 & 0 & 0 & 1 & 0 \\ 
0 & 0 & 0 & 0 & -2 & 0 & 0 & 0 & 0 & 0 & 0 & 0 & 0 & 0 & 1 & 0 & 0 & 0 & 0 & 0 & 1 & 0 \\ 
0 & 0 & 0 & 0 & 0 & -2 & 0 & 0 & 0 & 0 & 0 & 0 & 0 & 0 & 0 & 1 & 0 & 0 & 0 & 0 & 1 & 0 \\ 
0 & 0 & 0 & 0 & 0 & 0 & -2 & 0 & 0 & 0 & 0 & 0 & 0 & 0 & 0 & 0 & 1 & 0 & 0 & 0 & 1 & 0 \\ 
0 & 0 & 0 & 0 & 0 & 0 & 0 & -2 & 0 & 0 & 0 & 0 & 0 & 0 & 0 & 0 & 0 & 1 & 0 & 0 & 1 & 0 \\ 
0 & 0 & 0 & 0 & 0 & 0 & 0 & 0 & -2 & 0 & 0 & 0 & 0 & 0 & 0 & 0 & 0 & 0 & 1 & 0 & 1 & 0 \\ 
0 & 0 & 0 & 0 & 0 & 0 & 0 & 0 & 0 & -2 & 0 & 0 & 0 & 0 & 0 & 0 & 0 & 0 & 0 & 1 & 1 & 0 \\ 
1 & 0 & 0 & 0 & 0 & 0 & 0 & 0 & 0 & 0 & -2 & 0 & 0 & 0 & 0 & 0 & 0 & 0 & 0 & 0 & 1 & 0 \\ 
0 & 1 & 0 & 0 & 0 & 0 & 0 & 0 & 0 & 0 & 0 & -2 & 0 & 0 & 0 & 0 & 0 & 0 & 0 & 0 & 1 & 0 \\ 
0 & 0 & 1 & 0 & 0 & 0 & 0 & 0 & 0 & 0 & 0 & 0 & -2 & 0 & 0 & 0 & 0 & 0 & 0 & 0 & 1 & 0 \\ 
0 & 0 & 0 & 1 & 0 & 0 & 0 & 0 & 0 & 0 & 0 & 0 & 0 & -2 & 0 & 0 & 0 & 0 & 0 & 0 & 1 & 0 \\ 
0 & 0 & 0 & 0 & 1 & 0 & 0 & 0 & 0 & 0 & 0 & 0 & 0 & 0 & -2 & 0 & 0 & 0 & 0 & 0 & 1 & 0 \\ 
0 & 0 & 0 & 0 & 0 & 1 & 0 & 0 & 0 & 0 & 0 & 0 & 0 & 0 & 0 & -2 & 0 & 0 & 0 & 0 & 1 & 0 \\ 
0 & 0 & 0 & 0 & 0 & 0 & 1 & 0 & 0 & 0 & 0 & 0 & 0 & 0 & 0 & 0 & -2 & 0 & 0 & 0 & 1 & 0 \\ 
0 & 0 & 0 & 0 & 0 & 0 & 0 & 1 & 0 & 0 & 0 & 0 & 0 & 0 & 0 & 0 & 0 & -2 & 0 & 0 & 1 & 0 \\ 
0 & 0 & 0 & 0 & 0 & 0 & 0 & 0 & 1 & 0 & 0 & 0 & 0 & 0 & 0 & 0 & 0 & 0 & -2 & 0 & 1 & 0 \\ 
0 & 0 & 0 & 0 & 0 & 0 & 0 & 0 & 0 & 1 & 0 & 0 & 0 & 0 & 0 & 0 & 0 & 0 & 0 & -2 & 1 & 0 \\ 
1 & 1 & 1 & 1 & 1 & 1 & 1 & 1 & 1 & 1 & 1 & 1 & 1 & 1 & 1 & 1 & 1 & 1 & 1 & 1 & -2 & 0 \\ 
0 & 0 & 0 & 0 & 0 & 0 & 0 & 0 & 0 & 0 & 0 & 0 & 0 & 0 & 0 & 0 & 0 & 0 & 0 & 0 & 0 & -2 \\ 
\end{smallmatrix}
\right). \]
\end{proof}

\begin{proof}[Proof of Claim \ref{claim:exceptional}]
We first prove (2) assuming (1).
Let $C \subset X_K$ be an (irreducible) exceptional curve for $\pi_{i,K}$.
Then the specialization $C_0$ of $C$ to $X_k$ is the sum of exceptional curves and is connected,
hence is either an exceptional curve for $\pi_i$ or the sum of two exceptional curves forming an $A_2$ component. 
Since $C^2 \geq -2$, we observe that all components of $C_0$ have multiplicity $1$.
By checking liftability of the classes, we obtain the stated list.
(The class $l_{16} + l_{99}$ is liftable to a class $C_{16,99}$ of $X_K$ since it is equal to $h - l_{57} - l_{75}$
and the lines $l_{57}$ and $l_{75}$ are liftable. It is irreducible since the lines $l_{16}$ and $l_{99}$ are not liftable.
The class $l_{29} + l_{49}$ is not liftable since it is equal to $h - l_{41} - l_{77}$
and the line $l_{41}$ is liftable and $l_{77}$ is not. The other cases are similar or simpler.)

We now prove (1).
By computing the intersection numbers we see that the above curves are indeed exceptional. 
We need to show there are no more. 
First we consider $\pi_2$. 
We identify $H^0(X_k, \cO(mD_2))$ with the space of homogeneous polynomials of degree $2m$ modulo $F$
with vanishing order at least $m$ at $l_{65}$, $l_{66}$, and $l_{70}$.
Define linear polynomials $f_{65}, g_{65}, f_{70}, g_{70}$ by
\begin{alignat*}{2}
 f_{65} &= w + (1+i)y &&\in H^0(X_k, \cO(h - (l_{65} + l_{66}))), \\ 
 g_{65} &= x + (1+i)z &&\in H^0(X_k, \cO(h - (l_{65}))), \\ 
 f_{70} &= x + (1-i)z &&\in H^0(X_k, \cO(h - (l_{70} + l_{66}))), \\
 g_{70} &= w + (1-i)y &&\in H^0(X_k, \cO(h - (l_{70}))), 
\end{alignat*}
so that they vanish on the indicated lines.
Let $A = f_{65}g_{70}$,
$B = g_{65}f_{70}$,
$C = f_{65}f_{70}$.
Then $A,B,C$ form a basis of $H^0(X_k, \cO(D_2))$.
Let $Y_1 =  (1+i) f_{65}f_{70} (f_{65}^3 g_{70}   + g_{65}^3 f_{70})$
and $Y_2 = (-1+i) f_{65}f_{70} (f_{65}   g_{70}^3 + g_{65}   f_{70}^3)$.
Then we see that $Y_1 - Y_2 = FC \equiv 0 \pmod F$, 
and that $Y_1$ ($= Y_2$) together with the ten cubic monomials of $A,B,C$ form a basis of $H^0(X_k, \cO(3D_2))$.
We obtain the formula $Y_1^2$ ($= Y_1Y_2$) $= A^3B^3 + (A^4+B^4)C^2  + ABC^4$ and
conclude that it has $13$ exceptional curves (forming two $A_2$ and nine $A_1$). 
Hence the list above gives all exceptional curves.

Now we consider $\pi_1$.
We identify $H^0(X_k, \cO(mD_1))$ with the space of homogeneous polynomials of degree $3m$ modulo $F$
with vanishing order at least $m$ at each of $l_{21}$, $l_{22}$, $l_{50}$, $l_{63}$, $l_{65}$, and $l_{88}$.
Define linear polynomials $a,b_1,c_1,d_1,c_2,d_2$ and a quadratic polynomial $\phi_2$ by
\begin{alignat*}{2}
 c_1 &= w + iy + (-i)z & &\in H^0(X_k, \cO(h - (l_{21} + l_{22}))), \\ 
 c_2 &= w + (-i)x + (-1+i)y + (-1-i)z & &\in H^0(X_k, \cO(h - (l_{22} + l_{88}))), \\ 
 d_1 &= w + (1+i)x + (-1-i)y + (-1)z & &\in H^0(X_k, \cO(h - (l_{21} + l_{50}))), \\
 d_2 &= w + (-1-i)x + (i)y + (1-i)z & &\in H^0(X_k, \cO(h - (l_{50} + l_{88}))), \\
 b_1 &= w + (-i)x + (1+i)y + (1-i)z & &\in H^0(X_k, \cO(h - (l_{21} + l_{65}))), \\
 a   &= w + ix + (1+i)y + (-1+i)z & &\in H^0(X_k, \cO(h - (l_{63} + l_{65}))), 
\end{alignat*}
and
\[ \phi_2 = c_2 d_1 + (1+i) c_1 d_2 + c_2 d_2 \in H^0(X_k, \cO(2h - (l_{22} + l_{50} + l_{63} + l_{88}))),
\]
so that 
they vanish on the indicated lines.
Let $P = a c_1 d_2$, $Q = a c_2 d_1$, and $R = b_1 \phi_2$.

Then $P,Q,R$ form a basis of $H^0(X_k, \cO(D_1))$, 
and $\pi_1$ is given by $[P : Q : R]$.
We compute the images of the above curves and obtain
\begin{align*}
l_{10}, l_{18} \to S_{10,18} &= (0 :  0   : 1), \\
l_{16}, l_{99} \to S_{16,99} &= (1 :  0   : 1+i), \\
l_{29}, l_{49} \to S_{29,49} &= (1 :  1-i : 1-i), \\
l_{60}, l_{73} \to S_{60,73} &= (1 : -1-i : 0), \\
l_{23}         \to T_{ 23}   &= (0 :  1   : -1), \\
l_{37}         \to T_{ 37}   &= (1 : -1+i : 0), \\
l_{62}         \to T_{ 62}   &= (1 :  1+i : 0), \\
l_{68}         \to T_{ 68}   &= (1 :  1+i : -i), \\
l_{102}        \to T_{102}   &= (1 : -1+i : i), \\
l_{112}        \to T_{112}   &= (0 :  1   :-1-i), \\
\end{align*}
for each component.
We look for sextic curve that have these $10$ points as singular points.
By a straightforward calculation (computer-aided, omitted)
we observe that there is only one such sextic curve
and its equation is 
\begin{gather*}
G = 
(-1) Q^2 R^4
+ (-1+i) Q^3 R^3
+ Q^4 R^2
+ Q^5 R
+ (i) Q^6
+ (-i) P Q R^4 \\
+ (-i) P Q^2 R^3 
+ (-1-i) P Q^4 R
+ (-1-i) P Q^5
+ P^2 R^4
+ (-1) P^2 Q R^3 \\
+ (i) P^2 Q^3 R
+ (-1) P^3 R^3 
+ (1+i) P^3 Q^2 R
+ (-1+i) P^3 Q^3
+ (-1) P^4 R^2 \\
+ P^5 R
+ (1+i) P^5 Q
+ P^6
.
\end{gather*}
Hence $Y^2 = G(P,Q,R)$ is the equation of $X_k$ relative to $\pi_1$, at least after extending $k$.
By a calculation (omitted) we observe that the points $S_{j,j'}$ (resp.\ $T_j$) are exactly the cusps (resp.\ nodes) of the sextic,
hence their fibers are exactly $l_j \cup l_{j'}$ (resp.\ $l_j$).
It remains to check there are no other singular points on this sextic.
First we see that such singular point is necessarily $\bF_9$ ($= k$)-rational since,
if not, the fibers give classes of $\NS(X_{\overline k})$ that are not $\Gal(\overline \bF_9/\bF_9)$-invariant,
which is absurd because $\NS(X_{\overline k})$ is generated by lines defined over $\bF_9$.
So we only need to check $\bF_9$-rational points on $X_k$, 
and as there are only $91$ $\bF_9$-rational points in $\bP^2$,
this can be done in a finite amount of calculation (omitted).
\end{proof}

\begin{bibdiv}
\begin{biblist}
\bibselect{myrefs}
\end{biblist}
\end{bibdiv}

\end{document}